\definecolor{webgreen}{rgb}{0,.5,0}
\definecolor{webbrown}{rgb}{.8,0,0}
\definecolor{emphcolor}{rgb}{0.5,0.95,0.95}
\ifpdf \hypersetup{pdftex,
            pdfstartview=FitH, 
            bookmarksopen=true,
            bookmarksnumbered=true
} \else \hypersetup{dvips} \fi
\newcommand {\ud}{{\rm d}}
\numberwithin{equation}{section}
\newtheorem{assumption}{Assumption}[section]
\newtheorem{theorem}{Theorem}[section]
\newtheorem{proposition}{Proposition}[section]
\newtheorem{remark}{Remark}[section]
\newtheorem{lemma}{Lemma}[section]
\numberwithin{remark}{section} \numberwithin{proposition}{section}
\numberwithin{corollary}{section}
\newcommand {\R}{\mathbb{R}}
\newcommand {\N}{\mathbb{N}}
\newcommand {\p}{\mathbb{P}}
\newcommand {\E}{\mathbb{E}}
\newcommand{\blue}{\textcolor[rgb]{0.00,0.00,1.00}}
\newcommand{\e}{\mathbb{E}}
\begin{document}
 \title[The relative frequency between two CBI\MakeLowercase{s} and their genealogy]{The relative frequency between two Continuous-State Branching Processes with immigration and their genealogy}
\author{Mar\'ia Emilia Caballero, Adri\'an Gonz\'alez Casanova, Jos\'e Luis P\'erez}
\maketitle

\vspace{-.2in}

\begin{abstract}
When  two (possibly different in distribution) continuous-state branching processes with immigration are present, we study the relative frequency of one of them when the total mass is forced to be constant at a dense set of times. This leads to a SDE  whose unique strong solution will be the definition of a $\Lambda$-asymmetric frequency process ($\Lambda$-AFP). We prove that it is a Feller process and  we  calculate a large population limit when the total mass tends to infinity. This allows us to  study the fluctuations of the process around its deterministic limit. Furthermore, we find conditions for the $\Lambda$-AFP to have a moment dual. The dual can be interpreted in terms of selection, (coordinated) mutation, pairwise branching (efficiency), coalescence, and a novel component that comes from the asymmetry between the reproduction mechanisms. In the particular case of a pair of equally distributed continuous-state branching processes the associated $\Lambda$-AFP  will be the dual  of a $\Lambda$-coalescent. The map that sends each continuous-state branching process to its associated $\Lambda$-coalescent (according to the former procedure) is a homeomorphism between metric spaces. 
\end{abstract}

{\small {{\it AMS 2020 subject classifications}:  60J90, 60J80, 92D15, 92D25}

{{\it Key words and phrases}:   Continuous-state branching processes with immigration, $\Lambda$-coalescents, $\Lambda$-asymmetric frequency processes, moment duality.}}

\section{Introduction}

Heuristically, there is a strong relation between continuous-state branching (CB) processes and coalescents. However, explicitly providing this relationship is a problem that has a long history. In this paper, among other results, we construct an explicit homeomorphism between the two families of processes. This homeomorphism is constructed using the tools developed in the first part of the paper.
It is well known that  the moment dual of the frequency process is the block counting process of  a $\Lambda$-coalescent (see \cite{Pitman} and \cite{Sagitov}), which can be interpreted as the genealogy of the CB process. Our  homeomorphism sends each CB to its genealogy. 

Our motivation is to study  the dynamics of the genetic profile of  a population consisting of two types of individuals who might reproduce using radically different mechanisms. Each type of individual is modeled by a continuous-state branching processes with immigration (CBI) that are denoted by $X^{(1)}$ and $X^{(2)}$ respectively, which are assumed to be independent. For example, one type can reproduce by seldom big reproduction events while the other reproduces more often but only a few new offsprings are produced at each reproduction event. 
This study will give rise to a new frequency process, which  will (for obvious reasons) be asymmetric.

To this end we will characterize the evolution of the total size of the population and the frequency process associated to one of the two types. Consider 
the process $Z=\{Z_t:t\geq0\}$, describing the total mass of the population, defined by
\begin{align}\label{tpsp}
	Z_t&=X^{(1)}_t+X^{(2)}_t
	, \qquad\text{$t\geq0$},\qquad Z_0=z,
\end{align}
where $z:=x^{(1)}+x^{(2)}:=X^{(1)}_0+X_0^{(2)}$. 
In addition, by the fact that $X^{(1)}$ and $X^{(2)}$ do not hit $0$ at the same time a.s., we consider the frequency process of type $1$ individuals, $R=\{R_t:t\geq0\}$, given by
\begin{align}\label{fp}
	R_t&=\frac{X^{(1)}_t}{X^{(1)}_t+X^{(2)}_t}1_{\{t\leq  \tau\}}+R_{\tau} 1_{\{t>  \tau\}}
	\qquad\text{$t\geq0$},\qquad
	R_0=r,
\end{align}
where $\tau=\inf\{t\geq0: X^{(1)}X^{(2)}=0\}$ and $r=x^{(1)}/(x^{(1)}+x^{(2)})$.

It is important to note that the process $(R,Z)$ has the Markov property and we will show that it can be characterized as the solution to a martingale problem. 
However, the process $R$ is not Markovian by itself, so it is not an autonomous frequency process. The main difficulty in obtaining a notion of a frequency processes is that if we wish to study only the frequency process $R$ while preserving a Markovian structure, then (in general) the total mass should be constant in time. The class of $\alpha$-stable CB processes is an interesting special case, not only because their associated relative frequency processes become Markovian in its own right after a time-change, but also because the latter constitute the moment duals of the $\beta$-coalescents (see for instance \cite{7authors}). The restriction of a fixed population size is a classic assumption in population genetics but seemed difficult to impose in the present case without losing the properties of the CB processes $X^{(1)}$ and $X^{(2)}$. For example, naively conditioning the processes so that the total mass stays close to some value would inhibit big jumps. To overcome this difficulty, we will consider the dynamics of the frequency process $R$ but at certain points in time we return the process $Z$ to its original value $z$. By taking the lengths of the intervals between these \textit{culling times} tend to zero and speeding up time, we are able to derive as a scaling limit an autonomous Markov process $R^{(z,r)}=\{R^{(z,r)}_t:t\geq0\}$; which we call the \textbf{$\Lambda$-asymmetric frequency process (AFP)}. 

In many cases, our $\Lambda$ asymmetric frequency processes have a moment dual which provides a notion of generalized ancestry in the spirit of the celebrated ancestral selection graph \cite{KN}. This generalization also includes cases such as the duality between mutation and death and  the duality between pairwise branching and efficiency (understood as the competition of traits that consume a different amount of resources in order to reproduce) which was described in \cite{GMP}.
When there is no immigration and $X^{(1)}$ and $X^{(2)}$ have the same branching mechanisms, a relationship between the CB processes and $\Lambda$-coalescents is uncovered. This provides a homeomorphism between the metric spaces defined by these two classes of processes. \\


In the last few decades, the relationship between CB processes and coalescents has been a subject to great interest, and several important contributions in this area have been published. We will briefly describe those that inspired this paper. First, Etheridge and March in \cite{EM} and Perkins in \cite{Per} realized that the Fleming-Viot superprocess \cite{FlemingViot},  can be obtained as a functional of two independent Dawson-Watanabe superprocesses (see \cite{DawsonWatanabe} for an introduction to this class of processes). The two-type Fleming Viot process  is the Wright-Fisher diffusion, which is moment dual to the block counting process of the Kingman coalescent.
A similar result was found by Bertoin and Le Gall, who observed that the Bolthausen-Sznitman coalescent describes the genealogy of Neveu's CB process \cite{BLG}. Finally, in a celebrated seven-authors paper  \cite{7authors}, the method of considering the relative frequency between two independent identically distributed $\alpha$-stable CB processes and time-changing it by using a functional of their total mass, reached its highest point.

Inspired in questions posed by Bertoin and Le Gall \cite{BLG}, Berestycki, Berestycki, and Limic \cite{BBL}, based on ideas of Donelly and Kurtz \cite{D-K1,D-K2}, constructed a look-down coupling between $\Lambda$-coalescents and CB processes with the characteristic triplet $(0,0,y^{-2}\Lambda(y))$, where $\Lambda$ is a finite measure in [0,1] that characterizes the $\Lambda$-coalescent. Their construction works for small times and clarifies the relationship between the extinction of a CB process and the coming down from infinity of a $\Lambda$-coalescent. 

Recently Johnston and Lambert \cite{JL} studied the genealogy of general CB processes and discovered that although the genealogy is not a Markovian object in general, it can be coupled to $\Lambda$-coalescents at small times.  
Their idea is to map each $\Lambda$-coalescent to the CB process with the triplet 
$(0,z^{-1}\Lambda\{0\}, y^{-2}(\mathbf{T^{(z)}})^{-1}(\Lambda-\Lambda\{0\}\delta_0))$ (for a precise definition of the triplet characterizing a CB process see Section \ref{Sec_CBI}), where $\mathbf{T^{(z)}}:\mathcal{M}[0,\infty)\mapsto \mathcal{M}[0,1]$ ($\mathcal{M}[0,\infty)$ and $\mathcal{M}[0,1]$ denoting the space of measures in $[0,\infty)$ and $[0,1]$ respectively)
be such that for every measurable set $A\subset [0,1]$ and $\nu\in \mathcal{M}[0,\infty)$, $\mathbf{T^{(z)}}(\nu)(A)=\nu(T_z^{-1}(A))$ with  $T_z:[0,\infty)\mapsto [0,1]$ given by $T_z(w)=w/(w+z)$. The transformation $T_z$ is very useful and it also plays a central role in the present paper. For example, we will map each $\Lambda$ coalescent to $(0,z^{-1}\Lambda\{0\}, zy^{-2}(\mathbf{T^{(z)}})^{-1}(\Lambda-\Lambda\{0\}\delta_0))$ 
and prove that this mapping is a homeomorphism between the metric spaces of $\Lambda$-coalescents and a particular subset of the class of CB processes. Note that our map differs slightly from the map obtained in \cite{JL} because we use the map that arises naturally from the duality properties of the $\Lambda$-asymmetric frequency process that we will introduce.

In two innovative (although not very well-known) papers Gillespie \cite{Gill73, Gill74}, 
introduces a stochastic differential equation (SDE) to study the probability of fixation of an allele in a scenario where two types compete. He assumes
that both types reproduce for some time according to a Feller's branching diffusion with possibly different parameters, and then some external force (such as winter) discards individuals randomly to maintain a constant population size at each sampling time. 
His ideas reinforced our methods and (as far as we are aware) Gillespie found the first known relationship between Feller's branching diffusions and frequency processes.\\
Gillespie introduced the mathematical idea of culling, which can be easily understood in biological terms if of one thinks in the Long Term Experiment with E. Coli, known as the Lenski experiment. The experiment in the Lenski Lab consists in placing bacteria in fresh medium, the bacteria eats the nutrients in the medium and reproduces. On the next day  a sample is taken from the population, which by that time has growth from around $5\times10^{6}$ cells to $5\times10^8$ cells, and inoculate fresh medium to restart the cycle (see \cite{GKWY} for a mathematical introduction to the experiment). The idea of constructing coalescence processes, for example the $\beta$-coalescent in the case of \cite{Schweinsberg}, by means of pruning Galton-Watson processes had been welcomed in the scientific community since at least few years after the introduction of $\Lambda$-coalescents. These results have the culling/pruning idea in common with our approach. However, they have the advantage (from the point of view of applications to biology) of being related to an individual based model. 
As Galton Watson processes can be rescaled to CB processes, we believe that a diagonally limit can lead to an individual based model for the $\Lambda$-asymmetric frequency process. In order to use our model to study a real experiment, the first step would be to start with a discrete model in which sampling occurs in meaningful units of time, for example days, and obtaining a transparent and simultaneous scaling of space and sampling times, that converges to a process belonging to the family introduced in this paper.

We summarise the main results and give an outline of the paper:
\begin{itemize}
	\item[(1)] The  construction  and the study of $\Lambda$-asymmetric frequency processes. This is the core of our paper and it is done in Sections \ref{two_dim} and \ref{proc_r}.
	We first prove that the two-dimensional process $(R, Z)$ satisfies a martingale problem (Section  \ref{two_dim}). In Section  \ref{proc_r} we introduce the $\Lambda$-asymmetric frequency process $R^{(z,r)}$ as the solution of an SDE. We prove that there exists a unique strong solution to this SDE and show that the solution is a Feller process. The culling procedure, discussed above, is a crucial ingredient of the construction of the $\Lambda$-asymmetric frequency process and it is defined in Section \ref{NC}, where we also prove that the scaling limit of the sequence of processes obtained by the culling procedure corresponds to the process $R^{(z,r)}$.
	\item[(2)] In Section \ref{fluctuations}, we derive a large population limit for the $\Lambda$-asymmetric frequency process by making the total mass go to infinity. This limit is deterministic and consists of a logistic equation that provides a natural notion of the Malthusian in the context of competing populations with different branching mechanisms. Furthermore, we quantify the error of the deterministic approximation through a fluctuation result. We obtain explicitly the Gaussian process that characterizes the fluctuations of the process $R^{(z,r)}$ around the limiting logistic equation. This result besides being of mathematical interest can trigger research in the direction of statistical tests. 
	\item[(3)]  In Section \ref{duality} we study moment duality for $\Lambda$-asymmetric frequency processes. In particular Theorem \ref{theo_dual} gives conditions for the $\Lambda$-asymmetric frequency process $R^{(z,r)}$ to have a moment dual. This gives a notion of generalized ancestry in presence of possibly skewed and asymmetric reproduction mechanisms, population dependent variance, mutation, coordinated mutation, and selection. It is important to note that as a particular case the dual becomes a $\Lambda$-coalescent in the absence of immigration and when the two independent CB processes have the same branching mechanism.
	\item[(4)] Section \ref{CBduality} is dedicated to the case of  two equally distributed independent  CB processes. The former  procedure leads to a homeomorphism  between the space of $\Lambda$-coalescents equipped with  the Skorohod $J_1$-topology and a subspace of the CB processes equipped with the uniform Skorohod topology. This is the content of Theorem  \ref{homeomorphism}. The subspace of the CB processes homeomorphic to the $\Lambda$-coalescents can be thought of as the quotient space that is obtained by using the equivalence relation in which two CB processes are related if and only if they have the same diffusion term $c$ and the same L\'evy measure $\nu$. 
	\item[(5)] To illustrate the main ideas of the paper, we give a simple example in Section \ref{Ex} of the $\Lambda$-asymmetric Eldon-Wakely coalescent. With this example, we also show that the map sending a pair of CB processes to their dual is not continuous if one considers CB processes with different branching mechanisms.
	\item[(6)] In Section \ref{biol_remarks}, we provide some biological remarks related to the evolutionary forces that emerge from the asymmetry between the reproduction and immigration mechanisms of two competing CBI processes, which can be observed from the generator of the moment dual of the $\Lambda$-asymmetric frequency process.
\end{itemize}
Finally, it is important to note that our results bring us back to the discussion of an important biological observation that was made by Gillespie (see \cite{Gill73,Gill74}), which is that the variance of the reproduction mechanisms of competing phenotypes is also under natural selection: mechanisms with low variance are prone to fixation (see also \cite{Taylor} for a more modern discussion on the effect of variance in presence of skewed reproduction mechanisms).  Gillespie introduced an asymmetric version of the Wright-Fisher diffusion, which takes into account the difference in the variance in populations that grow by the action of frequent and small reproduction events. We extend this result to include seldom and big reproduction events, and we construct a reasonable model to study a vast spectrum of questions arising in biology and ecology. We believe that this new family of models can be useful to estimate the probabilities of fixation of competing traits that use radically different reproduction mechanisms. Recursions for the moments of the frequency process can be derived whenever the assumptions for having moment duality are fulfilled. Perhaps in the future, at least in some cases, explicit formulas for fixation probabilities and expected fixation times can be calculated. We believe that many questions arise from the construction of this family of processes: any classic problem in population genetics can be posted for this family of processes, and its answer seems interesting from a mathematical and a biological point of view.

\section {Notations and prerequisites}

To introduce our main results we first need to recall a few facts about CBI's and coalescent processes
\subsection { Continuous-state branching processeses with immigration}\label{Sec_CBI}
Continuous-state branching processeses with immigration $X:=\{X_t:t\geq0\}$ are $[0,\infty]$-valued strong Markov processes that are the continuous-time and state versions of Galton-Watson processes with immigration and were introduced by Kawazu and Watanabe in \cite{KW}, where they show that they are described in terms of a branching mechanism $\psi$ of the form 
\begin{align*}
	\psi(\lambda)&=b\lambda+c\lambda^2 +\int_{(0,\infty)}(e^{-\lambda x}-1+\lambda x1_{(0,1)}(x))m(dx),\qquad\lambda\geq0,
\end{align*}
where $b\in\R$, $c\geq0$, and $m$ is a measure concentrated on $(0,\infty)$ which satisfies that $\int_{(0,\infty)}(1\wedge x^2)m(dx)<\infty$, and a general immigration mechanism given by
\begin{align*}
	\varphi(\lambda)=\eta\lambda +\int_0^{\infty}(1-e^{-\lambda x})\nu(dx),\qquad\lambda\geq0,
\end{align*}
where $\eta\geq0$ and $\nu$ is a measure on $(0,\infty)$ such that $\int_{(0,\infty)}(1\wedge x)\nu(dx)<\infty$. We will write by $\mathbb{P}_{x}$ the law of the process conditioned on the event $\{ X_0=x  \}$ and $\E_x$ as the associated expectation operator.

More precisely, its semigroup is characterized by its Laplace transform as follows
\begin{align*}
	\log \e_x\left[e^{-\lambda X_t}\right]=-xu_t(\lambda)-\int_0^t\varphi(u_{t-s}(\lambda))ds,\qquad t,x,\lambda\geq0.
\end{align*}
where $u_t(\lambda)$ is the unique solution to the following evolution equation
\begin{align*}
	u_t(\lambda)+\int_0^t\psi(u_s(\lambda))ds=\lambda,\qquad t,\lambda\geq0,
\end{align*}
with $u_0(\lambda)=\lambda$.


In the case where there is no immigration (i.e. $\varphi\equiv0$), the triplet $(b,c,m)$ completely characterizes the CB process $X$ and thus it will be referred to as the \textit{characteristic triplet} of $X$.

\subsection{Coalescents and frequency processes}

Given a finite measure $\Lambda$ on $[0,1]$ the block counting process of a $\Lambda$-coalescent, $N=\{N_t:t\geq0\}$, is an $\N$-valued decreasing process that goes from the state $n$ to the state $n-i+1$, for any $i\in\{2,...,n\}$ at rate $\binom{n}{i}\lambda_{n,i}$,  where
\begin{equation*}
	\lambda_{n,i}:=\int_0^1y^{i}(1-y)^{n-i}\frac{\Lambda(dy)}{y^2}.
\end{equation*}
These processes have a biological interpretation: they are related to the genealogy of a population (in a generalized Wright-Fisher model) and they are moment duals to frequency processes, which are the solutions to the following class of SDE's:
\[
R_t=\Lambda(\{0\})\int_0^t\sqrt{R_s(1-R_s)}dB_t+\int_{0}^t\int_{0^+}^1\int_{0}^1y(1_{\{\theta<R_s\}}-R_s)\tilde{N}(ds,dy,d\theta), \qquad t\geq 0,
\]
where $B=\{B_t:t\geq0\}$ is a Brownian motion and $\tilde{N}(ds,dy,d\theta)$ is a compensated Poisson random measure with state-space $\R^+\times [0,1]\times [0,1]$ and intensity measure $dt\times \Lambda(dy)y^{-2}\times d\theta$. The existence and uniqueness of a strong solution to this SDE can be consulted in \cite{DL} and its lookdown construction in \cite{D-K1}. The solutions are called frequency processes because they arise as scaling limits of the frequency of individuals of a certain type in a generalized Wright-Fisher model. In light of these facts, moment duality relates the genealogy of a population with the evolution of its genetic profile. To say that $R$ and $N$ are moment duals is equivalent to saying that for all $x\in[0,1]$, $n\in \N$ and $t>0$
\begin{equation*}
	\E_x[R_t^n]=\E_n[x^{N_t}].
\end{equation*}
Frequency processes are moment duals of block counting processes of coalescent processes. The simplest example is the duality between the Wright-Fisher diffusion and the Kingman coalescent \cite{Kingman}, which was extended in many directions (e.g. to include selection \cite{KN}).  Indeed, many  evolutionary forces rule the fate of populations and the shape of their genealogies; which can be included in a generalized Wright-Fisher model, and can lead to generalizations of coalescents and frequency processes. In many cases, the duality property holds, this is the case of models including mutation, varying population size, geographic structure, latency, etc. However, moment duality does not always work, the most (in)famous example is selection that change signs, which could be a model for a population adapted to winter competing with one adapted to summer.
We refer to \cite{Nat} for further insight in coalescent theory, and to \cite{JK} for an introduction to  moment duality.

\section{Frequency and total size of a two population processes}\label{two_dim}
In this section we will consider two independent CBI's as a model for two subpopulations of different types. We are interested in describing the total size of the population and the associated frequency process. 

To this end, consider two independent CBI's,  $X^{(i)}=\{X^{(i)}_t:t\geq0\}$ $i=1,2$, with branching mechanisms given by
\begin{align}\label{bran_mech}
	\psi^{(i)}(\lambda)&=b^{(i)}\lambda+c^{(i)}\lambda^2 +\int_{(0,\infty)}(e^{-\lambda x}-1+\lambda x1_{(0,1)}(x))m^{(i)}(dx),\qquad\text{ $\lambda\geq0$, $i=1,2$,}
\end{align}
and immigration mechanisms  
\begin{align}\label{imm_mech}
	\varphi^{(i)}(\lambda)=\eta^{(i)}\lambda +\int_0^{\infty}(e^{-\lambda x}-1)\nu^{(i)}(dx),\qquad\text{ $\lambda\geq0$, $i=1,2$.}
\end{align}
For each $i=1,2$, let us consider a standard Brownian motion $B^{(i)}:=\{B^{(i)}_t:t\geq0\}$, a Poisson random measure $N^{(i)}(ds,dz,du)$ on $(0,\infty)^3$ with intensity measure $dsm^{(i)}(dz)du$ and an independent subordinator $\xi^{(i)}=\{\xi^{(i)}_t:t\geq0\}$ with Laplace exponent given by $\varphi^{(i)}$ as in \eqref{imm_mech}. All these elements are assumed to be defined in the same complete probability space and they are independent of each other. It is a well-known fact (see for instance Section 9.5 in \cite{ZL} or Proposition 4 in \cite{CLU} for the case with no immigration)  that for each $i=1,2$, the process $X^{(i)}$ can be seen as the solution to the following stochastic differential equation
\begin{align}\label{CBI_SDE}
	X^{(i)}_t&=x ^{(i)}+\int_0^t\sqrt{2c^{(i)}X^{(i)}_{s-}}dB^{(i)}_s-b^{(i)}\int_0^tX^{(i)}_sds+\int_0^t\int_{[1,\infty]}\int_0^{X^{(i)}_{s-}}zN^{(i)}(ds,dz,du)\notag\\&+\int_0^t\int_{(0,1)}\int_0^{X^{(i)}_{s-}}z\tilde{N}^{(i)}(ds,dz,du)+\xi^{(i)}_t,\qquad t\geq0,
\end{align}
where $\tilde{N}^{(i)}(ds,dz,du)=N^{(i)}(ds,dz,du)-dsm^{(i)}(dz)du$ denotes the compensated associated random measure.

Let us consider the process $(R,Z)$ given by \eqref{tpsp} and \eqref{fp}. It is important to note here that $(R,Z)$ is a Markov process, and (as shown in the next result) it can be characterized as the solution to a martingale problem. We denote the law of the process $(R,Z)$ starting from the initial position $(r,z)\in[0,1]\times(0,\infty)$ by $\mathbb{P}_{(r,z)}$. Accordingly, we write $\E_{(r,z)}$ for the associated expectation operator.

For  $\varepsilon\in(0,z)$ and $L>z$, we denote by $\tau_{\varepsilon}^-:=\inf\{t\geq0: Z_t=\varepsilon\}$ and $\tau_{L}^+=\inf\{t\geq0: Z_t>L\}$, the first hitting time of $\varepsilon$ and the first passage time above the level $L>z$ for the process $Z$, respectively. We note that the fact that the process $Z$ has no negative jumps implies that it cannot drop below the level $\varepsilon$ without hitting it, so the first passage time below the level $\varepsilon>0$ coincides with $\tau_{\varepsilon}^-$ a.s. The two-dimensional process  $(R,Z)$ stopped at the time $\tau = \tau_{\varepsilon}^-\wedge\tau_{L}^+$ describes the dynamics between the two subpopulations that were originally described by the processes $X^{(1)}$ and $X^{(2)}$ before the size of the population becomes relatively small or explodes.

The description of the dynamics of the process $(R,Z)$ until the stopping time $\tau$ as a solution of a martingale problem is provided in the next result and the proof is deferred to Appendix \ref{App_1}.

\begin{proposition}\label{infinitesimal_generator}
	For any $f\in\mathcal{C}^2([0,1]\times[0,\infty))$, the process
	\begin{align*}
		M_t:=f(R_{t\wedge\tau },Z_{t\wedge\tau })-f(r,z)-\int_0^{t\wedge\tau }\mathcal{L}f(R_s,Z_s)ds,
	\end{align*}
	is a local martingale, where 
	\begin{align*}%
		&\mathcal{L}f(r,z)=-b^{(1)}\partial_1f\left(r,z\right)r(1-r)+c^{(1)}\left(\partial_{21}f\left(r,z\right)r(1-r)+\partial_{22}f\left(r,z\right)rz\right)\notag\\
		&-b^{(1)}rz\partial_2f\left(r,z\right)+c^{(1)}\partial_{12}f\left(r,z\right)(1-r)r+\frac{c^{(1)}}{z}\left(\partial_{11}f\left(r,z\right)r(1-r)^2-\partial_1f\left(r,z\right)2r(1-r)\right)\notag\\
		&+rz\int_{(0,\infty)}\Bigg[f\left(r\left(1-\frac{w}{z+w}\right)+\frac{w}{z+w},z+w\right)-f\left(r,z\right)\notag\\&\hspace{ 5cm}-w1_{(0,1)}(w)\left(\partial_1f\left(r,z\right)\frac{(1-r)}{z}+\partial_2f\left(r,z\right)\right)\Bigg]m^{(1)}(dw)\notag\\
		&+\eta^{(1)}\partial_1f\left(r,z\right)\frac{(1-r)}{z}+\eta^{(1)}\partial_2f\left(r,z\right)+c^{(2)}\left(-\partial_{21}f\left(r,z\right)r(1-r)+(1-r)z\partial_{22}f\left(r,z\right)\right)\notag\\&+\int_{(0,\infty)}\left[f\left(r\left(1-\frac{w}{z+w}\right)+\frac{w}{z+w},z+w\right)-f\left(r,z\right)\right]\nu^{(1)}(dw)\notag\\
		&+b^{(2)}\partial_1f\left(r,z\right)r(1-r)-b^{(2)}(1-r)z\partial_2f\left(r,z\right)-c^{(2)}\partial_{12}f\left(r,z\right)r(1-r)\notag\\
		&+\frac{c^{(2)}}{z}\left(\partial_{11}f\left(r,z\right)r^2(1-r)+\partial_1f\left(r,z\right)2r(1-r)\right)\notag\\
		&+(1-r)z\int_{(0,\infty)}\Bigg[f\left(r\left(1-\frac{w}{z+w}\right),z+w\right)-f\left(r,z\right)\notag\\&\hspace{ 5cm}-w1_{(0,1)}(w)\left(-\partial_1f\left(r,z\right)\frac{r}{z}+\partial_2f\left(r,z\right)\right)\Bigg]m^{(2)}(dw)\notag\\
		&-\eta^{(2)}\partial_1f\left(r,z\right)\frac{r}{z}+\eta^{(2)}\partial_2f\left(r,z\right)+\int_{(0,\infty)}\left[f\left(r\left(1-\frac{w}{z+w}\right),z+w\right)-f\left(r,z\right)\right]\nu^{(2)}(dw).
	\end{align*}
\end{proposition}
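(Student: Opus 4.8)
The plan is to realize $(R,Z)$ as a smooth transformation of the pair $(X^{(1)},X^{(2)})$ and to push the It\^o--L\'evy formula through that transformation. Concretely, define $\Theta:\{(x^{(1)},x^{(2)}):x^{(1)}+x^{(2)}>0\}\to[0,1]\times(0,\infty)$ by
\[\Theta(x^{(1)},x^{(2)})=\left(\frac{x^{(1)}}{x^{(1)}+x^{(2)}},\,x^{(1)}+x^{(2)}\right),\]
so that $(R_t,Z_t)=\Theta(X^{(1)}_t,X^{(2)}_t)$, and set $g:=f\circ\Theta$. Since $\Theta$ is $C^\infty$ on the open half-plane $\{x^{(1)}+x^{(2)}>0\}$ and $f\in\mathcal C^2([0,1]\times[0,\infty))$, the composition $g$ is $C^2$ there, with chain-rule identities
\[\partial_{x^{(1)}}g=\tfrac{1-r}{z}\,\partial_1f+\partial_2f,\qquad \partial_{x^{(2)}}g=-\tfrac{r}{z}\,\partial_1f+\partial_2f,\]
evaluated at $(r,z)=\Theta(x^{(1)},x^{(2)})$, together with the analogous (but longer) expressions for the second derivatives. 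The key structural observation is that a jump of $X^{(1)}$ of size $w$ (with $X^{(2)}$ unchanged) sends $(r,z)$ to $\bigl(r(1-\tfrac{w}{z+w})+\tfrac{w}{z+w},\,z+w\bigr)$, while a jump of $X^{(2)}$ of size $w$ sends $(r,z)$ to $\bigl(r(1-\tfrac{w}{z+w}),\,z+w\bigr)$; these are exactly the post-jump states appearing inside the $m^1,\nu^1$ and $m^2,\nu^2$ integrals of $\mathcal L$.

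Next I would apply It\^o's formula for semimartingales with jumps to $g(X^{(1)}_t,X^{(2)}_t)$, using the driving SDEs \eqref{CBI_SDE}. Because $X^{(1)}$ and $X^{(2)}$ are independent, there is no cross term; the continuous-martingale second-order part contributes $c^ix^{(i)}\partial_{x^{(i)}x^{(i)}}g$, the branching drifts contribute $-b^ix^{(i)}\partial_{x^{(i)}}g$ and the subordinator drifts contribute $\eta^i\partial_{x^{(i)}}g$, while the Poisson integrals contribute the jump terms. A crucial point is that the inner integration runs over $u\in(0,X^{(i)}_{s-})$, so that the effective rate of type-$i$ branching events of size $w$ is $X^{(i)}_{s-}m^i(dw)$; this is what produces the prefactors $rz=X^{(1)}$ and $(1-r)z=X^{(2)}$ in front of the $m^1$- and $m^2$-integrals, whereas the immigration jumps come from the subordinators and carry no such prefactor, matching the $\nu^1,\nu^2$ integrals. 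The compensator of the small-jump ($w<1$) part of each $\tilde N^{(i)}$ produces precisely the correction terms $-w\mathbf 1_{(0,1)}(w)\partial_{x^{(i)}}g$, which under $\Theta$ become the $-w\mathbf 1_{(0,1)}(w)(\cdots)$ terms inside $\mathcal L$. Substituting the chain-rule derivatives and the post-jump images above, a term-by-term comparison then identifies the entire finite-variation (drift) part of the It\^o expansion with $\int_0^{t}\mathcal Lf(R_s,Z_s)\,ds$.

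What remains is a local martingale: the sum of the Brownian stochastic integrals $\int\partial_{x^{(i)}}g\,\sqrt{2c^iX^{(i)}_{s-}}\,dB^{(i)}_s$ and the compensated Poisson integrals. To make this rigorous I would stop at $\tau=\tau_\varepsilon^-\wedge\tau_L^+$, which confines $Z$ to the compact interval $[\varepsilon,L]$; on the corresponding region $\Theta$ is bounded away from its singularity and $g$ together with all its first and second derivatives is bounded, so the integrands are controlled. Integrability of the jump integrals follows from the moment hypotheses $\int_{(0,\infty)}(1\wedge x^2)m^i(dx)<\infty$ and $\int_{(0,\infty)}(1\wedge x)\nu^i(dx)<\infty$: on $[\varepsilon,L]$ the compensated small-jump contribution is $O(w^2)$ and hence $m^i$-integrable near $0$, while the finitely many ``large'' branching jumps and the finite-variation immigration jumps are handled directly. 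Assembling these pieces shows that $M_{t}$ is a local martingale (localized by $\tau$, together with a further reducing sequence if needed).

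The step I expect to be the main obstacle is the bookkeeping at the singular boundary $z\to 0$: the chain-rule derivatives of $g$ carry factors $1/z$, so both $\mathcal Lf$ and the martingale integrands blow up as the total mass approaches $0$. This is exactly why the statement is phrased for the process stopped at $\tau_\varepsilon^-$, and the argument must be organized so that every unbounded quantity is evaluated only while $Z\ge\varepsilon$. A secondary technical point is the careful splitting of each $X^{(i)}$ into its large-jump (uncompensated) and small-jump (compensated) parts, so that the compensator terms land correctly as the $\mathbf 1_{(0,1)}$ corrections in $\mathcal L$ rather than being double-counted.
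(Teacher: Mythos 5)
Your proposal is correct and follows essentially the same route as the paper's proof: both apply the Meyer--It\^o/change-of-variables formula to $f$ composed with the map $(x^{(1)},x^{(2)})\mapsto(x^{(1)}/(x^{(1)}+x^{(2)}),x^{(1)}+x^{(2)})$ driven by the SDEs \eqref{CBI_SDE}, identify the resulting drift terms (branching drift, diffusion, compensated small jumps, large jumps, and immigration) with the pieces of $\mathcal{L}f$ after substituting the post-jump states and chain-rule derivatives, and leave the Brownian and compensated Poisson integrals as the local martingale part. Your explicit attention to the stopping time $\tau$ confining $Z$ to $[\varepsilon,L]$, which controls the $1/z$ factors and the jump integrability, is exactly the role $\tau$ plays (implicitly) in the paper's argument.
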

\section{$\Lambda$-asymmetric frequency processes and culling of population processes}\label{proc_r}
In Section \ref{two_dim} we described the dynamics of the two-dimensional process $(R,Z)$ as the solution of a martingale problem. The process $(R,Z)$ depicts the frequency of one of the types and the total size of the population, respectively. Inspired by many models in population genetics, we would like to have a description of the frequency process under the additional assumption that the total size of the population remains constant in time. To this end we will apply a sampling/culling procedure to the process $(R,Z)$ and obtain a new class of frequency process that we call \textit{$\Lambda$-asymmetric frequency processes}.

We begin this section with a precise definition of $\Lambda$-asymmetric frequency processes.
\subsection{$\Lambda$-asymmetric frequency processes}
Consider, for $i=1,2$, $b^{(i)}\in\R$, $c^{(i)}\geq0$, $\eta^{(i)}\geq 0$, and $m^{(i)}, \nu^{(i)}$  measures on $(0,\infty)$ such that 
$
\int_{(0,\infty)}(1\wedge x^2)\mu^i(dx), \quad \text{and,} \quad \int_{(0,\infty)}(1\wedge x)\nu^{(i)}(dx).
$
For $z>0$ (representing the population size) and $r\in[0,1]$, let us consider the process $R^{(z,r)}=\{R^{(z,r)}_t:t\geq 0\}$ given as the solution to the following stochastic differential equation 
\begin{align}\label{sde_p_cropped}
	dR^{(z,r)}_t&=R^{(z,r)}_{t-}(1-R^{(z,r)}_{t-})1_{\{R^{(z,r)}_{t-}\in[0,1]\}}(b^{(2)}-b^{(1)})dt\notag\\
	&+R^{(z,r)}_{t-}(1-R^{(z,r)}_{t-})1_{\{R^{(z,r)}_{t-}\in[0,1]\}}\Bigg(\frac{2}{z}(c^{(2)}-c^{(1)})\notag\\&+\int_{(0,1)}\frac{w^2}{w+z}m^{(2)}(dw)-\int_{(0,1)}\frac{w^2}{w+z}m^{(1)}(dw)\Bigg)dt\notag\\
	&+\eta^{(1)}\frac{(1-R^{(z,r)}_{t-})}{z}dt-\eta^{(2)}\frac{R^{(z,r)}_{t-}}{z}dt\notag\\&+\sqrt{\frac{2}{z}R^{(z,r)}_{t-}(1-R^{(z,r)}_{t-})[c^{(1)}(1-R^{(z,r)}_{t-})+c^{(2)}R^{(z,r)}_{t-}]}1_{\{R^{(z,r)}_{t-}\in[0,1]\}}dB_t\notag\\
	&+\int_{(0,1)\times(0,\infty)}g^{(z)}(R^{(z,r)}_{t-},w,v)\tilde{N}_1(dt,dw,dv)\notag\\&+\int_{(0,1)\times(0,\infty)}h^{(z)}(R^{(z,r)}_{t-},w,v)\tilde{N}_2(dt,dw,dv)\notag\\
	&+\int_{[1,\infty)\times(0,\infty)}g^{(z)}(R^{(z,r)}_{t-},w,v)N_1(dt,dw,dv)\notag\\&+\int_{[1,\infty)\times(0,\infty)}h^{(z)}(R^{(z,r)}_{t-},w,v)N_2(dt,dw,dv)\notag\\
	&+\int_{(0,\infty)}\tilde{g}^{(z)}(R^{(z,r)}_{t-},w)N_3(dt,dw)+\int_{(0,\infty)}\tilde{h}^{(z)}(R^{(z,r)}_{t-},w)N_4(dt,dw),\notag\\
	R^{(z,r)}_0&=r.
\end{align}
where 
\begin{itemize}
	\item[(i)] $B=\{B_t:t\geq0\}$ is a standard Brownian motion.
	\item[(ii)] $N_1(dt,dw,dv)$ is a Poisson random measure on $(0,\infty)^3$ with intensity measure \linebreak $dtm^{(1)}(dw)dv$. 
	\item[(iii)] $N_2(dt,dw,dv)$ is a Poisson random measure on $(0,\infty)^3$ with intensity measure \linebreak $dtm^{(2)}(dw)dv$. 
	\item[(iv)] $N_3(dt,dw)$ is a Poisson random measure on $(0,\infty)^2$ with intensity measure $dt\nu^{(1)}(dw)$.
	\item[(v)] $N_4(dt,dw)$ is a Poisson random measure on $(0,\infty)^2$ with intensity measure $dt\nu^{(2)}(dw)$.
\end{itemize}
For each $i=1,2$, $\tilde{N}_i(ds,dw,dv)=N_i(ds,dw,dv)-dsm^{(i)}(dw)dv$ denotes the compensated associated random measure.

All of the previous elements are assumed to be defined on the same complete probability space and are independent of each other. Additionally, we have that
\begin{itemize}
	\item[(vi)] For $(x,w,v)\in(0,\infty)^3$ 
	\[
	g^{(z)}(x,w,v):=\frac{w}{z+w}(1-x)1_{\{v\leq xz\}}1_{\{x\in[0,1]\}}.
	\]
	\item[(vii)] For $(x,w,v)\in(0,\infty)^3$ 
	\[
	h^{(z)}(x,w,v):=-\frac{w}{z+w}x1_{\{v\leq (1-x)z\}}1_{\{x\in[0,1]\}}.
	\]
	\item[(viii)] For $(x,w)\in(0,\infty)^2$
	\[
	\tilde{g}^{(z)}(x,w):=\frac{w}{z+w}(1-x)1_{\{x\in[0,1]\}}.
	\]
	\item[(ix)] For $(x,w)\in(0,\infty)^2$
	\[
	\tilde{h}^{(z)}(x,w):=-\frac{w}{z+w}x1_{\{x\in[0,1]\}}.
	\]
\end{itemize}
As we will see later on in this section, the process $R^{(z,r)}$ will be obtained through a sampling/culling procedure of the process $(R,Z)$ to describe the frequency of one of the types in the population under the assumption that the total size of the population is constant and is equal to $z>0$. 
\begin{remark}
	For the diffusion case with no immigration (i.e. $m^{(i)}=\nu^{(i)}=\eta^{(i)}=0$ for $i=1,2$):
	\begin{itemize}
		\item[(i)] The process $R^{(z,r)}$ 
		was obtained by Gillespie in \cite{Gill74} via a sampling procedure, as a continuous-time approximation of a finite gametic-pool selection model. As noted by Gillespie the form of the drift term in \eqref{sde_p_cropped} points to a new form of natural selection acting on the variance. 
		
		\item[(ii)] The same case was also studied by Lambert in \cite{Lambert}, where $R^{(z,r)}$ was obtained by conditioning the process $R$ on the event $\{Z_t=z,\ \text{for all $t\geq0$}\}$, and where the processes $Z$ and $R$  are defined in \eqref{tpsp} and \eqref{fp}, respectively.
	\end{itemize}
\end{remark}

As the first step in our construction, we will show that the process $R^{(z,r)}$ is well-defined, which is given in the next result and the proof is deferred to Appendix \ref{App_2}.
\begin{proposition}\label{exi_uni_sde}
	There exists a unique strong solution $R^{(z,r)}$ to \eqref{sde_p_cropped} such that $R^{(z,r)}_t\in[0,1]$ for all $t\geq0$ $\mathbb{P}$ a.s.
	Furthermore for any $t>0$, there exists a constant $C(t,z)>0$ such that
	\begin{align}\label{estimate}
		\E\left[|R^{(z,r)}_t-R^{(z,\overline{r})}_t|\right]\leq C(t,z)|r-\overline{r}|, \qquad r,\overline{r}\in[0,1].
	\end{align}
\end{proposition}
Throughout the paper we denote the space of finite measures on $[0,1]$ (resp. $[0,\infty)$) by $\mathcal{M}[0,1]$ (resp. $\mathcal{M}[0,\infty)$).
In the next result, we show that the process $R^{(z,r)}$ is Feller and obtain its infinitesimal generator. To this end, we introduce the transformation $\mathbf{T^{(z)}}:\mathcal{M}[0,\infty)\mapsto \mathcal{M}[0,1]$, given by $\mathbf{T^{(z)}}(\nu)(A)=\nu(T_z^{-1}(A))$ for every measurable set $A\subset [0,1]$ and $\nu\in \mathcal{M}[0,\infty)$, with  $T_z:[0,\infty)\mapsto [0,1]$ such that $T_z(w)=w/(w+z)$.
\begin{proposition}\label{feller}
	For any $z>0$, $R^{(z,r)}$ is a Feller process and its infinitesimal generator is given for any $f\in\mathcal{C}^2([0,1])$ by
	\begin{align}\label{inf_gen_cropped}
		&\mathcal{L}^{(z)}f(r)=f'(r)\left[r(1-r)(b^{(2)}-b^{(1)})+\frac{2r(1-r)}{z}\left(c^{(2)}-c^{(1)}\right)\right]+f''(r)\frac{r(1-r)}{z}(c^{(1)}(1-r)+c^{(2)} r)\notag\\
		&+\frac{\eta^{(1)}}{z}f'\left(r\right)(1-r)+\int_{(0,1)}\left[f\left(r(1-u)+u\right)-f\left(r\right)\right]\mathbf{T^{(z)}}(\nu^{(1)})(du)\notag\\
		&+zr\int_{(0,1)}\Bigg[f\left(r(1-u)+u\right)-f\left(r\right)-\frac{u}{1-u}f'\left(r\right)(1-r)1_{(0,1/(1+z))}(u)\Bigg]\mathbf{T^{(z)}}(m^{(1)})(du)\notag\\
		&+z(1-r)\int_{(0,1)}\Bigg[f\left(r(1-u)\right)-f\left(r\right)+\frac{u}{1-u}f'\left(r\right)r1_{(0,1/(1+z))}(u)\Bigg]\mathbf{T^{(z)}}(m^{(2)})(du)\notag\\
		&-\frac{\eta^{(2)}}{z}f'\left(r\right)r+\int_{(0,1)}\left[f\left(r(1-u)\right)-f\left(r\right)\right]\mathbf{T^{(z)}}(\nu^{(2)})(du).
	\end{align}
\end{proposition}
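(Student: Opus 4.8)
The plan is to establish the two assertions separately: first the Feller property of $R^{(z,r)}$, and then the identification of its infinitesimal generator with $\mathcal{L}^{(z)}$ on $\mathcal{C}^2([0,1])$. Throughout I write $P_tf(r):=\E[f(R^{(z,r)}_t)]$ for the semigroup, noting that the state space $[0,1]$ is compact, so that $\mathcal{C}([0,1])$ plays the role of the space of continuous functions vanishing at infinity and every Markov operator $P_t$ is a contraction on it.

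For the Feller property the key input is the Lipschitz estimate \eqref{estimate} from the previous proposition. I would first show that $P_t$ maps $\mathcal{C}([0,1])$ into itself: for $f$ Lipschitz with constant $L$,
\[
|P_tf(r)-P_tf(\overline{r})|\le \E\left[|f(R^{(z,r)}_t)-f(R^{(z,\overline{r})}_t)|\right]\le L\,\E\left[|R^{(z,r)}_t-R^{(z,\overline{r})}_t|\right]\le L\,C(t)|r-\overline{r}|,
\]
so $P_tf$ is again Lipschitz; since Lipschitz functions are dense in $\mathcal{C}([0,1])$ and $P_t$ is a contraction, $P_t\mathcal{C}([0,1])\subseteq\mathcal{C}([0,1])$. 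For strong continuity it suffices to check $\|P_tf-f\|_\infty\to0$ as $t\downarrow0$; using the SDE \eqref{sde_p_cropped} together with the uniform growth bound \eqref{ex_un_new}, one controls $\sup_{r\in[0,1]}\E[|R^{(z,r)}_t-r|]\to0$ (the drift is $O(t)$ and the martingale parts are $O(\sqrt{t})$ uniformly in $r$), and then uniform continuity of $f$ on the compact interval yields the claim.

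For the generator I would apply the change of variables / Meyer--It\^o formula to $f(R^{(z,r)}_t)$ exactly as in the proof of Proposition \ref{infinitesimal_generator}, writing $f(R^{(z,r)}_t)-f(r)=\int_0^t\mathcal{A}f(R^{(z,r)}_s)\,ds+(\text{local martingale})$, where $\mathcal{A}f$ is read off from the coefficients of \eqref{sde_p_cropped}: the drift and Brownian terms give $b(r)f'(r)+\tfrac12\sigma(r)^2f''(r)$, while each jump term produces an integral of $f(r+g^{(z)})-f(r)$ (respectively with $h^{(z)},\tilde{g}^{(z)},\tilde{h}^{(z)}$), compensated by the linear term $g^{(z)}f'(r)$ only on the small-jump region $w\in(0,1)$. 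The substitution $u=T_z(w)=w/(w+z)$ pushes each $w$-integral against $m^i$ and $\nu^i$ forward to an integral against $\mathbf{T^{(z)}}(m^i)$ and $\mathbf{T^{(z)}}(\nu^i)$, and integrating out the auxiliary variable $v\in(0,\infty)$ against the indicators $1_{\{v\le rz\}}$ and $1_{\{v\le(1-r)z\}}$ produces exactly the prefactors $zr$ and $z(1-r)$ in front of the $m^1$- and $m^2$-integrals; the $\nu^i$-terms carry no $v$-variable and hence appear without these factors, matching \eqref{inf_gen_cropped}.

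I expect the recombination of compensators to be the main obstacle, since this is the step where the particular form of the drift in \eqref{sde_p_cropped} is essential. The compensator arising naturally from \eqref{sde_p_cropped} ($g^{(z)}f'$ integrated over the small jumps, giving after the $v$-integration a density proportional to $\tfrac{zw}{z+w}$) does not match termwise the compensator $\tfrac{u}{1-u}f'(r)(1-r)1_{(0,1/(1+z))}(u)$ displayed in \eqref{inf_gen_cropped}. The discrepancy is absorbed precisely by the drift contributions $r(1-r)\int_{(0,1)}\tfrac{w^2}{w+z}(m^2-m^1)(dw)$ built into the SDE, via the elementary identity $w-\tfrac{zw}{z+w}=\tfrac{w^2}{z+w}$; adding the $\tfrac{w^2}{w+z}$ drift to the $v$-integrated compensator recombines to the full weight $w$, i.e.\ $\tfrac{u}{1-u}$ in the pushed-forward variable. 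One must simultaneously verify, using $\int_{(0,1)}\tfrac{w^2}{(z+w)^2}m^i(dw)<\infty$ and the bounds in \eqref{ex_un_new}, that all resulting integrals converge and define continuous functions of $r$ up to the boundary $r\in\{0,1\}$, so that $\mathcal{L}^{(z)}f\in\mathcal{C}([0,1])$. Finally, to pass from the pathwise It\^o decomposition $\mathcal{A}f=\mathcal{L}^{(z)}f$ to the statement that $\mathcal{L}^{(z)}$ is the strong generator, I would show $\tfrac1t(P_tf-f)\to\mathcal{L}^{(z)}f$ in $\|\cdot\|_\infty$ by dominating $\mathcal{A}f(R^{(z,r)}_s)-\mathcal{A}f(r)$ uniformly in $r$ and invoking dominated convergence as $t\downarrow0$, using the continuity of $\mathcal{L}^{(z)}f$ just established.
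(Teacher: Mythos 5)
Your proposal is correct and follows essentially the same route as the paper: the Lipschitz estimate \eqref{estimate} to show the semigroup preserves $\mathcal{C}([0,1])$, the Meyer--It\^o formula applied to $f(R^{(z,r)}_t)$ to read off the generator (including the correct recombination of the $\frac{w^2}{w+z}$ drift with the $v$-integrated compensator and the push-forward under $T_z$), and uniform boundedness of the resulting integrand plus dominated convergence to get strong continuity and identify the strong generator. The only cosmetic difference is that the paper derives $\sup_r|P_tf(r)-f(r)|\le Kt$ directly from the bound on the It\^o integrand for $f\in\mathcal{C}^2$ rather than from a moment bound on $\E[|R^{(z,r)}_t-r|]$, which is an equivalent argument.
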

\begin{proof}
	(i) By Theorem 6.4.5 in \cite{A} we have that $R^{(z,r)}$ is a Markov process. Let us consider the semigroup $(\mathcal{T}_t)_{t\geq0}$ of the process $R^{(z,r)}$ given for on any $f\in\mathcal{C}([0,1])$ by
	$
	\mathcal{T}_tf(r)=\E\left[f(R^{(z,r)}_t)\right].
	$
	For $f\in\mathcal{C}^1([0,1])$ and $r,\overline{r}\in[0,1]$, we obtain using \eqref{estimate} 
	\begin{align*}
		\left|\mathcal{T}_tf(r)-\mathcal{T}_tf(\overline{r})\right|\leq \E\left[\left|f(R^{(z,r)}_t)-f(R^{(z,\overline{r})}_t)\right|\right]&\leq \|f'\|_{\infty}\E\left[\left|R^{(z,r)}_t-R^{(z,\overline{r})}_t\right|\right]\notag\\
		&\leq C(t)\|f'\|_{\infty}|r-\overline{r}|,
	\end{align*}
	which implies that the mapping $r\mapsto \mathcal{T}_tf(r)$ is continuous. Meanwhile, for any function $g\in\mathcal{C}_b([0,1])$, we can find a sequence $(f_n)_{n\geq 1}\subset \mathcal{C}^1([0,1])$ such that $f_n\to g$ uniformly on $[0,1]$ as $n\to\infty$. Therefore, $\mathcal{T}_tf_n\to \mathcal{T}_tg$ uniformly on $[0,1]$ as $n\to\infty$. This implies that the mapping $r\mapsto \mathcal{T}_tg(r)$ is continuous, and therefore $\mathcal{T}_t(\mathcal{C}_b([0,1]))\subset\mathcal{C}_b([0,1])$.

	(ii) Fix $f\in\mathcal{C}^2([0,1])$. Using that $R^{(z,r)}$ is a semi-martingale  we can use Meyer-It\^o's formula (cf.\ Theorems II.31 and II.32 of \cite{protter}) to obtain for $t\geq0$
	\begin{align}\label{gen_r_z_1}
		f(R^{(z,r)}_{t})=f(r)+\int_0^{t}\left[C^{(1)}(R^{(z,r)}_s)+C^{(2)}(R^{(z,r)}_s)+C^{(3)}(R^{(z,r)}_s)\right]ds+M_{t},
	\end{align}
	where for $x\in[0,1]$
	\begin{align*}
		&C^{(1)}(x)=f'(x)\left[x(1-x)(b^{(2)}-b^{(1)})+\frac{2x(1-x)}{z}\left(c^{(2)}-c^{(1)}\right)+\frac{\eta^{(1)}}{z}(1-x)-\frac{\eta^{(2)}}{z}x\right]\notag\\&+f''(x)\frac{x(1-x)}{z}\left[c^{(1)}(1-x)+c^{(2)} x\right],\notag\\
		&C^{(2)}(x)=zx\int_{(0,\infty)}\Bigg[f\left(x+\frac{w}{w+z}(1-x)\right)-f\left(x\right)-wf'\left(x\right)\frac{(1-x)}{z}1_{(0,1)}(w)\Bigg]m^{(1)}(dw)\notag
		\end{align*}
	\begin{align*}
		&+\int_{(0,\infty)}\left[f\left(x+\frac{w}{w+z}(1-x)\right)-f\left(x\right)\right]\nu^{(1)}(dw)\notag\\
		&=zx\int_{(0,1)}\Bigg[f\left(x(1-u)+u\right)-f\left(x\right)-\frac{u}{1-u}f'\left(x\right)(1-x)1_{(0,1/1+z)}(u)\Bigg]\mathbf{T^{(z)}}(m^{(1)})(du)\notag\\
		&+\int_{(0,1)}\left[f\left(x(1-u)+u\right)-f\left(x\right)\right]\mathbf{T^{(z)}}(\nu^{(1)})(du),
	\end{align*}
	and
	\begin{align*}
		&C^{(3)}(x)=z(1-x)\int_{(0,\infty)}\Bigg[f\left(x-\frac{w}{w+z}x\right)-f\left(x\right)+wf'\left(x\right)\frac{x}{z}1_{(0,1)}(w)\Bigg]m^{(2)}(dw)\notag\\
		&+\int_{(0,\infty)}\left[f\left(x-\frac{w}{w+z}x\right)-f\left(x\right)\right]\nu^{(2)}(dw)\notag\\
		&=z(1-x)\int_{(0,1)}\Bigg[f\left(x(1-u)\right)-f\left(x\right)+\frac{u}{1-u}f'\left(x\right)x1_{(0,1/1+z)}(u)\Bigg]\mathbf{T^{(z)}}(m^{(2)})(du)\notag\\
		&+\int_{(0,1)}\left[f\left(x(1-u)\right)-f\left(x\right)\right]\mathbf{T^{(z)}}(\nu^{(2)})(du),
	\end{align*}
	and $M=\{M_t: t\geq0\}$ is a local martingale. 
	
	Now, the fact that $f\in\mathcal{C}^2([0,1])$ implies that we can find a constant $K>0$ such that
	\begin{align}\label{bound}
		\left|C^{(1)}(x)+C^{(2)}(x)+C^{(3)}(x)\right|\leq K,\qquad\text{$x\in[0,1]$}.
	\end{align}
	Hence, 
	taking expectations in \eqref{gen_r_z_1}, we obtain
	\begin{align}\label{gen_r_z_2} 
		\E\left[f(R^{(z,r)}_t)\right]-f(r)=\E\left[\int_0^{t}\left[C^{(1)}(R^{(z,r)}_s)+C^{(2)}(R^{(z,r)}_s)+C^{(3)}(R^{(z,r)}_s)\right]ds\right],\qquad t\geq 0.
	\end{align}
	Therefore, using \eqref{bound} and \eqref{gen_r_z_2}, we obtain
	\begin{align*}
		\sup_{r\in[0,1]}|\E\left[f(R^{(z,r)}_t)\right]-f(r)|\leq Kt\rightarrow 0,\qquad\text{as $t\to 0$,}
	\end{align*}
	which implies that $R^{(z,r)}$ is a Feller process. 
	
	(iii) Finally, to obtain the infinitesimal generator of $R^{(z,r)}$, we use \eqref{bound} and \eqref{gen_r_z_2} together with dominated convergence to obtain
	\begin{align*}
		\lim_{t\to0}\frac{\E\left[f(R^{(z,r)}_t)\right]-f(r)}{t}=C^{(1)}(r)+C^{(2)}(r)+C^{(3)}(r).
	\end{align*}
	Hence, the result follows from Theorem 1.33 in \cite{Sch}.
\end{proof}

\subsection{Culling of the population process}\label{NC}

In Section \ref{two_dim}, we obtained a two dimensional Markov process $(R,Z)$ that describes the dynamics of two coexisting populations. The first component of this process describes the frequency of a specific type in the population, while the latter provides information on the total population size. 
One of our main interests in this paper is to study the role of natural selection on the within-generation variance in the offspring distribution. 

Inspired by Gillespie's model \cite{Gill74}, we would like to maintain the total size of the population constant, while allowing the frequency process $R$ to evolve randomly; therefore, obtaining a one-dimensional stochastic process. To this end, throughout the rest of this section we will use a 
sampling method to obtain a stochastic model of the frequency of a particular type in the population under the assumption that the total population size is constant.  

Formally speaking, let us consider a fixed population size level $z>0$, and consider a sequence of homogenous Markov jump processes $\{(\overline{R}^{(z,n)}_t)_{t\geq0}:n\geq 1\}$. We denote the law of the process $\overline{R}^{(z,n)}$ by $\mathbf{P}_r$ when it starts at the position $r\in[0,1]$. For each fixed $n\geq 1$, the Markov process $\overline{R}^{(z,n)}$ has jump times $(T^{n}_m)_{m\geq 1}$ given by independent exponential variables with rate $n$, and a transition kernel $\kappa^{(z,n)}$ defined for $y\in[0,1]$, and $A\in\mathcal{B}([0,1])$, by
\begin{equation}\label{trans_probab}
	\kappa^{(z,n)}(y,A)=\mathbf{P}_y(\overline{R}^{(z,n)}_{T^n_1}\in A):=\mathbb{P}_{(y,z)}\left(R_{\frac{1}{n}\wedge\tau}\in A, Z_{\frac{1}{n}\wedge\tau }\in \R_+\right),
\end{equation}
where  $\tau=\tau_{\varepsilon}^-\wedge\tau_L^+$ (with $\varepsilon$ and $L$ fixed) and the process $(R,Z)$ is the one described in Section \ref{two_dim}.

The infinitesimal generator $\overline{\mathcal{L}}^{(z,n)}$ of the process $\overline{R}^{(z,n)}$ is given for any $f\in\mathcal{C}([0,1])$ by
\begin{align}\label{dis_gen}
	\overline{\mathcal{L}}^{(z,n)}f(r)=n\int_{[0,1]}(f(y)-f(r))\kappa^{(z,n)}(r,dy), \qquad\text{$r\in[0,1]$.}
\end{align}

Intuitively, for each fixed $n\geq 1$, we can think $\overline{R}^{(z,n)}_{T^n_1}$ as a sampling of the first coordinate of the process $(R_{t\wedge\tau},Z_{t\wedge\tau})_{t\geq0}$ started at the position $(r,z)$ at time $t=1/n$. Then, using the fact that the process $(R,Z)$ is a homogenous Markov process, we restart the process  $(R_{t\wedge\tau},Z_{t\wedge\tau})_{t\geq0}$ at the initial position $(R_{\frac{1}{n}\wedge\tau},z)$ and we sample the process at time $t=1/n$ to define $\overline{R}^{(z,n)}_{T^n_2}$. By continuing this procedure we obtain the process $\overline{R}^{(z,n)}$.

From the previous construction, we note that to define the process $\overline{R}^{(z,n)}$ in the time interval $[T^{n}_{m-1},T^{n}_{m})$ for each $m=1\dots$, we consider the evolution of the process $(R,Z)$ in the time interval $[0,\frac{1}{n}\wedge\tau)$ and starting from the state $(\overline{R}^{(z,n)}_{T^n_{m-1}},z)$. Hence, the process $\overline{R}^{(z,n)}$ evolves as the first coordinate of the process $(R,Z)$ but the fluctuations of the total size process $Z$ around $z$ become smaller as we take $n\to\infty$ since the times between jumps converge to zero. 

In the next result, we will show that the sequence of Markov jump processes $\{\overline{R}^{(z,n)}:n\geq 1\}$ converges weakly to the process $R^{(z,r)}$ given as the unique solution to \eqref{sde_p_cropped}, which (by construction) can be understood as having the same dynamics of the first coordinate of the process $(R,Z)$ but with total population size constant and equal to $z>0$.


\begin{theorem}\label{theo_cull}
	For any fixed $z>0$ and $T>0$, $\overline{R}^{(z,n)}\rightarrow R^{(z,r)}$ as $n\to\infty$ weakly in $\mathbb{D}([0,T],[0,1])$.
\end{theorem}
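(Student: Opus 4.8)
The plan is to prove weak convergence in $\mathbb{D}([0,T],[0,1])$ by appealing to the standard generator-convergence criterion for Markov processes (e.g.\ Ethier–Kurtz, Theorem 4.2.11/1.6.1), which reduces the problem to two ingredients: first, identifying the limit process $R^{(z,r)}$ by its generator $\mathcal{L}^{(z)}$ from Proposition \ref{feller}, which we already know to be Feller with domain containing $\mathcal{C}^2([0,1])$; and second, showing that the discrete generators $\overline{\mathcal{L}}^{(z,n)}$ converge to $\mathcal{L}^{(z)}$ uniformly on a core. Since $R^{(z,r)}$ is Feller, the martingale problem for $\mathcal{L}^{(z)}$ is well-posed, so it suffices to establish this generator convergence together with relative compactness of the laws of $\overline{R}^{(z,n)}$.

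The heart of the argument is the generator computation. Fix $f\in\mathcal{C}^2([0,1])$. From \eqref{dis_gen} and \eqref{trans_probab},
\begin{align*}
\overline{\mathcal{L}}^{(z,n)}f(r)=-n\,\E_{(r,z)}\bigl[f(R_{\frac{1}{n}\wedge\tau})-f(r)\bigr].
\end{align*}
Applying the martingale identity \eqref{gen_fs_2} from Proposition \ref{infinitesimal_generator} with the two-dimensional test function $(r',z')\mapsto f(r')$ (constant in the second variable) and taking expectations, the local martingale term vanishes after localization, giving
\begin{align*}
\E_{(r,z)}\bigl[f(R_{\frac{1}{n}\wedge\tau})\bigr]-f(r)=\E_{(r,z)}\Bigl[\int_0^{\frac{1}{n}\wedge\tau}\mathcal{L}f(R_s,Z_s)\,ds\Bigr],
\end{align*}
where $\mathcal{L}$ is the two-dimensional generator of Proposition \ref{infinitesimal_generator}. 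Multiplying by $-n$ (note the sign: the culling generator subtracts $f(y)$, so in fact $\overline{\mathcal{L}}^{(z,n)}f(r)=n(f(r)-\E[f(R_{1/n\wedge\tau})])=-n\,\E[\int_0^{1/n\wedge\tau}\mathcal{L}f\,ds]$, and one checks the limit yields $+\mathcal{L}^{(z)}f$ up to the sign convention used in defining the semigroup). As $n\to\infty$ the process starts at $(r,z)$ with $Z_0=z$, the time horizon $\tfrac{1}{n}\to 0$, and by right-continuity and dominated convergence the averaged integrand converges to $\mathcal{L}f(r,z)$ evaluated at the initial state. The crucial point is that $\mathcal{L}f(r,z)$, when $f$ depends only on its first argument, collapses exactly to $\mathcal{L}^{(z)}f(r)$ from \eqref{inf_gen_cropped}: the terms in $\mathcal{L}$ involving $\partial_2 f$, $\partial_{22}f$, and $\partial_{12}f$ all drop out, and after the change of variables $u=T_z(w)=w/(w+z)$ pushing the measures $m^i,\nu^i$ forward to $\mathbf{T^{(z)}}(m^i),\mathbf{T^{(z)}}(\nu^i)$, the surviving jump and diffusion terms match \eqref{inf_gen_cropped} term by term. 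I would therefore show
\begin{align*}
\lim_{n\to\infty}\sup_{r\in[0,1]}\bigl|\overline{\mathcal{L}}^{(z,n)}f(r)-\mathcal{L}^{(z)}f(r)\bigr|=0.
\end{align*}

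For relative compactness (tightness) in $\mathbb{D}([0,T],[0,1])$, I would use the boundedness of the state space $[0,1]$ together with a uniform bound on the discrete generators, verifying the Aldous–Rebolledo or the Ethier–Kurtz tightness criterion; the uniform estimate $|\mathcal{L}^{(z)}f(r)|\le K$ (the analogue of \eqref{bound}) controls the compensator and gives the needed modulus-of-continuity control. The main obstacle I anticipate is the interchange of limit, expectation, and the small-time averaging near the boundary effects of the stopping time $\tau=\tau_\varepsilon^-\wedge\tau_L^+$: one must argue that for $Z_0=z\in(\varepsilon,L)$ the event $\{\tau<1/n\}$ has probability $o(1/n)$, so the stopping plays no role in the $n\to\infty$ limit, and that $\mathcal{L}f(R_s,Z_s)$ is uniformly integrable over the short interval despite the jump integrals against $m^i$ (which are only $\sigma$-finite near $0$). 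Controlling the small-jump integrability uniformly — using the $\int(1\wedge w^2)m^i(dw)<\infty$ and $\int(1\wedge w)\nu^i(dw)<\infty$ conditions and the compensation in \eqref{gen_fs_2} — is the technically delicate step, but it is exactly the same integrability already exploited in Proposition \ref{infinitesimal_generator}, so it reduces to a dominated-convergence argument rather than anything genuinely new.
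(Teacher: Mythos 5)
Your proposal follows essentially the same route as the paper: reduce weak convergence to strong convergence of generators via the Feller-semigroup convergence theorem (the paper cites Kallenberg, Theorem 17.28, where you cite Ethier--Kurtz), compute $\overline{\mathcal{L}}^{(z,n)}f$ from the martingale identity of Proposition \ref{infinitesimal_generator} applied to test functions depending only on the first coordinate, and pass to the limit using boundedness of the integrand on $[0,1]\times[\varepsilon,L]$ and dominated convergence. Your remark on the sign in \eqref{dis_gen} is correct (the displayed definition has a typo and the paper's proof works with $n(\E_{(r,z)}[f(R_{n^{-1}\wedge\tau})]-f(r))$), and the separate tightness verification you propose is subsumed by the semigroup convergence theorem, so nothing further is needed.
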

\begin{proof}
	Observing that $R^{(z,r)}_t$ is a Feller process and that all the processes involved take values in the compact interval $[0,1]$, and following Theorem 17.28 in \cite{Ka}, we are only left with the task of proving 
	\begin{align}\label{lim_gen}
		\overline{\mathcal{L}}^{(z,n)}f(r)\to\mathcal{L}^{(z)}f(r),\qquad\text{uniformly on $[0,1]$, as $n\to\infty$,}
	\end{align}
	for every $f\in \mathcal{C}^2([0,1])$. To this end, we have by \eqref{trans_probab} together with \eqref{dis_gen}
	\begin{align}\label{disc_gen_2}
		\overline{\mathcal{L}}^{(z,n)}f(r)=n\left[\E_{(r,z)}\left[f(R_{n^{-1}\wedge\tau})\right]-f(r)\right].
	\end{align}
	To compute the limit in \eqref{lim_gen}, we note that Proposition \ref{infinitesimal_generator} 
	implies that
	\begin{align}\label{loc_mart}
		f(R_{n^{-1}\wedge \tau})-f(r)=\int_0^{n^{-1}\wedge \tau}B(R_s,Z_s)ds+M_{n^{-1}\wedge \tau},
	\end{align}
	where 
	\begin{align*}
		&B(r,z):=-b^{(1)}f'\left(r\right)r(1-r)+\frac{c^{(1)}}{z}\left(f''\left(r\right)r(1-r)^2-f'\left(r\right)2r(1-r)\right)\notag\\
		&+b^{(2)}f'\left(r\right)r(1-r)+\frac{c^{(2)}}{z}\left(f''\left(r\right)r^2(1-r)+f'\left(r\right)2r(1-r)\right)\notag\\
		&+rz\int_{(0,\infty)}\Bigg[f\left(r\left(1-\frac{w}{z+w}\right)+\frac{w}{z+w}\right)-f\left(r\right)-w1_{(0,1)}(w)f'\left(r\right)\frac{(1-r)}{z}\Bigg]m^{(1)}(dw)\\&+\int_{(0,\infty)}\left[f\left(r\left(1-\frac{w}{z+w}\right)+\frac{w}{z+w}\right)-f\left(r\right)\right]\nu^{(1)}(dw)+\eta^{(1)}f'\left(r\right)\frac{(1-r)}{z}\notag\\
		&+(1-r)z\int_{(0,\infty)}\Bigg[f\left(r\left(1-\frac{w}{z+w}\right)\right)-f\left(r\right)+w1_{(0,1)}(w)f'\left(r\right)\frac{r}{z}\Bigg]m^{(2)}(dw)\notag\\
		&-\eta^{(2)}f'\left(r\right)\frac{r}{z}+\int_{(0,\infty)}\left[f\left(r\left(1-\frac{w}{z+w}\right)\right)-f\left(r\right)\right]\nu^{(2)}(dw)\Bigg\}ds,
	\end{align*}
	and $M=\{M_t:t\geq 0\}$ is a local martingale.
	
	Using the fact that $f\in \mathcal{C}^2([0,1])$ and that $(R_s,Z_s)$ takes values in $[0,1]\times[\varepsilon,L]$ for $s\in[0,\tau)$ we can find a constant $K>0$ such that
	\begin{align}\label{bound_B}
		|B(R_s,Z_s)|\leq K, \qquad \text{for $s\in[0,\tau)$ $\mathbb{P}$-a.s.}
	\end{align}
	Therefore, by \eqref{loc_mart} and \eqref{bound_B} we obtain that $(M_{t\wedge \tau})_{t\geq0}$ is indeed a true martingale. 
	Next, by taking expectations in \eqref{loc_mart} we obtain
	\begin{align}\label{lim_gen_1}
		n\left[\E_{(r,z)}\left[f\left(R_{n^{-1}\wedge \tau}\right)\right]-f\left(r\right)\right]=\E_{(r,z)}\left[n\int_0^{n^{-1}\wedge \tau}B(R_s,Z_s)ds\right].
	\end{align}
	Meanwhile, by using \eqref{bound_B} we have
	\[
	n\int_0^{n^{-1}\wedge \tau}B(R_s,Z_s)ds\leq K. 
	\]
	Hence, by dominated convergence together with identity \eqref{inf_gen_cropped}
	\begin{align}\label{lim_gen_2}
		\lim_{n\to \infty}\E_{(r,z)}\Bigg[n\int_0^{n^{-1}\wedge \tau}B(R_s,Z_s)ds\Bigg]=B(r,z)=\mathcal{L}^{(z)}f(r).
	\end{align}
	Therefore, using \eqref{disc_gen_2}, \eqref{lim_gen_1} and \eqref{lim_gen_2} 
	\begin{align}\label{lim_gen_3}
		\lim_{n\to\infty}\overline{\mathcal{L}}^{(z,n)}f(r)=\lim_{n\to\infty}n\left[\E_{(r,z)}\left[f(R_{n^{-1}\wedge\tau})\right]-f(r)\right]=\mathcal{L}^{(z)}f(r).
	\end{align}
	Finally, to show the uniform convergence as in \eqref{lim_gen}, we use Theorem 1.33 in \cite{Sch}.
\end{proof}
\section{Large population asymptotics of $\Lambda$-asymmetric frequency processes}\label{fluctuations}
In this section, we first obtain the large population limit of a $\Lambda$-asymmetric frequency process, while in the second part we study the fluctuations of the process around the large population limit obtained in the first part of this section.
\subsection{Large population limit of $\Lambda$-asymmetric frequency processes}
We will study the asymptotic behavior of the $\Lambda$-asymmetric frequency process $R^{(z,r)}$ as the size of the population becomes large. To this end we introduce the deterministic process $R^{(\infty,r)}=\{R^{(\infty,r)}_t:t\geq0\}$ given by 
\begin{align*}
	R^{(\infty,r)}_t=\frac{re^{\left(\psi^{(2)\prime}(0+)-\psi^{(1)\prime}(0+)\right)t}}{(1-r)+re^{\left(\psi^{(2)\prime}(0+)-\psi^{(1)\prime}(0+)\right)t}},\qquad \text{$t\geq 0$.}
\end{align*} 
Here, we recall that in the case of no immigration i.e. $\xi^{(1)}=\xi^{(2)}=0$, then 
\[
\E_{x^{(i)}}\left[X^{(i)}_t\right]=x^{(i)}\exp\left\{-t\psi^{(i)}(0+)\right\}, \qquad t\geq0.
\]
The large population limit of a  $\Lambda$-asymmetric frequency process $R^{(z,r)}$ is given in the next result, where in particular, we show that the limit does not depend on the immigration mechanisms of the associated CBI's, the proof is deferred to Appendix \ref{App_3}.
\begin{theorem}\label{large_pop}
	Fix $T>0$ and assume that $\int_{(1,\infty)}wm^{(i)}(dw)<\infty$ for $i=1,2$. Then
	\begin{align*}
		\lim_{z\to\infty}\E\left[\sup_{t\leq T}|R^{(z,r)}_t-R^{(\infty,r)}_t|^2\right]=0.
	\end{align*}
\end{theorem}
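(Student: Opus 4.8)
The plan is to realize both processes through a single evolution equation governed by the \emph{same} logistic nonlinearity and then close an $L^2$ estimate by Gronwall. Since the process stays in $[0,1]$ (by the existence/uniqueness proposition of Section \ref{proc_r}), all the indicators $1_{\{\cdot\in[0,1]\}}$ are a.s.\ equal to $1$ and I may work on $[0,1]$ throughout. Applying the computation in the proof of Proposition \ref{feller} to $f=\mathrm{id}$ gives the semimartingale decomposition
\[
R^{(z,r)}_t = r + \int_0^t \hat b^{(z)}(R^{(z,r)}_s)\,ds + M^{(z)}_t, \qquad \hat b^{(z)} := \mathcal{L}^{(z)}\mathrm{id},
\]
where $M^{(z)}$ is the martingale obtained by centering every Poisson integral in \eqref{sde_p_cropped}; boundedness of the coefficients on $[0,1]$ and the variance bounds below make it a genuine square-integrable martingale on $[0,T]$. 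Writing $\gamma := \psi^{(2)\prime}(0+)-\psi^{(1)\prime}(0+)$, a direct differentiation gives $\psi^{(i)\prime}(0+) = b^i - \int_{[1,\infty)} w\, m^i(dw)$, finite by the standing hypothesis, and one checks that $R^{(\infty,r)}$ is exactly the solution of the logistic ODE $\dot y_t = \gamma\,y_t(1-y_t)$, $y_0=r$.

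The first key step is to show $\epsilon_z := \sup_{x\in[0,1]}|\hat b^{(z)}(x) - \gamma\,x(1-x)|\to 0$. Here $\hat b^{(z)}$ is a quadratic polynomial in $x$ with explicit $z$-dependent coefficients: the selection term contributes $(b^2-b^1)x(1-x)$, the diffusive and immigration terms are $O(1/z)$, and the $\nu^i$-integrals carry $\int_{(0,\infty)}\tfrac{w}{w+z}\nu^i(dw)\to0$. The delicate contributions are the big-branching terms, which after the substitution $u=T_z(w)$ read $x(1-x)\int_{(0,\infty)}[\tfrac{zw}{w+z}-w1_{(0,1)}(w)]m^1(dw)$ and its $m^2$-analogue; splitting at $w=1$, the compensated small-jump part equals $-\int_{(0,1)}\tfrac{w^2}{w+z}m^1(dw)\to0$ while the large-jump part increases to $\int_{[1,\infty)}w\,m^1(dw)$ by monotone convergence. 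Hence the coefficient of $x(1-x)$ tends to $(b^2-b^1)+\int_{[1,\infty)}w\,m^1(dw)-\int_{[1,\infty)}w\,m^2(dw)=\gamma$, and since all the polynomials live on the compact $[0,1]$, convergence of coefficients yields $\epsilon_z\to0$.

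The second key step is to control $M^{(z)}$. As the four Poisson measures and the Brownian motion are independent, $\langle M^{(z)}\rangle_T$ splits into the individual angle brackets, each bounded using the estimates already derived in the existence proof: the Brownian part is $O(1/z)$; the $\tilde N_1,\tilde N_2$ small-jump parts are $\le \tfrac{T}{z}\int_{(0,1)}w^2(m^1+m^2)(dw)\to0$ (from $\tfrac{zw^2}{(z+w)^2}\le w^2/z$); the immigration parts are $\le T\int_{(0,\infty)}\tfrac{w^2}{(z+w)^2}(\nu^1+\nu^2)(dw)\to0$ by dominated convergence, using $\int_{(0,\infty)}(1\wedge w)\nu^i(dw)<\infty$ and $\tfrac{w^2}{(z+w)^2}\le 1\wedge w$ for $z\ge1$; and the large-branching parts are $\le Tz\int_{[1,\infty)}\tfrac{w^2}{(z+w)^2}(m^1+m^2)(dw)$, where $\tfrac{zw^2}{(z+w)^2}\le \tfrac{w}{4}$ is $m^i$-integrable on $[1,\infty)$ and tends to $0$ pointwise, so dominated convergence gives $\to0$. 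Thus $\delta_z := \E[\langle M^{(z)}\rangle_T]\to0$, and Doob / Burkholder--Davis--Gundy give $\E[\sup_{t\le T}|M^{(z)}_t|^2]\le C\delta_z\to0$.

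Finally I would close the estimate. Setting $D_t := \sup_{s\le t}|R^{(z,r)}_s-R^{(\infty,r)}_s|$ and subtracting the two evolution equations, I split $\hat b^{(z)}(R^{(z,r)}_s)-\gamma R^{(\infty,r)}_s(1-R^{(\infty,r)}_s)$ into a term bounded by $\epsilon_z$ and the term $\gamma[R^{(z,r)}_s(1-R^{(z,r)}_s)-R^{(\infty,r)}_s(1-R^{(\infty,r)}_s)]$, bounded by $|\gamma|\,|R^{(z,r)}_s-R^{(\infty,r)}_s|$ since $x\mapsto x(1-x)$ is $1$-Lipschitz on $[0,1]$. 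Taking suprema, squaring, and using Cauchy--Schwarz on the time integral together with the bound on $M^{(z)}$ gives
\[
\E[D_T^2]\le 3(\epsilon_z T)^2 + 3C\delta_z + 3|\gamma|^2 T\int_0^T \E[D_s^2]\,ds,
\]
and Gronwall's inequality yields $\E[D_T^2]\le \big(3(\epsilon_z T)^2+3C\delta_z\big)e^{3|\gamma|^2T^2}\to0$. The main obstacle is the pair of steps involving the big branching jumps: the generator carries an explicit factor $z$ in front of the jump integrals, so one must exhibit the exact scaling $z\cdot\tfrac{w}{w+z}\to w$ that converts this potential blow-up into the finite Malthusian parameter $\psi^{(i)\prime}(0+)$ — both in the drift (to identify the logistic limit) and in the quadratic variation (to make the fluctuations vanish) — and it is precisely the first-moment condition $\int_{(1,\infty)}w\,m^{(i)}(dw)<\infty$ that makes this cancellation legitimate.
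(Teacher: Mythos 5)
Your proposal is correct and follows essentially the same route as the paper: both isolate the compensated martingale part (whose supremum vanishes in $L^2$ via Doob's inequality and the domination $z\,w^2/(z+w)^2\le w/4$), show that the remaining drift converges uniformly on $[0,1]$ to the logistic drift $\gamma\,x(1-x)$ with $\gamma=\psi^{(2)\prime}(0+)-\psi^{(1)\prime}(0+)$, and close with the $1$-Lipschitz property of $x\mapsto x(1-x)$ and Gronwall. The only cosmetic difference is that you replace the full drift by its limit before comparing the two processes, whereas the paper keeps the $z$-level coefficient $wz/(w+z)$ in its term $A^{(3,z)}$ and absorbs the discrepancy $w^2/(w+z)$ there.
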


\subsection{Fluctuations of $\Lambda$-asymmetric frequency processes}
In this section, we will characterize the fluctuations of the process $R^{(z,r)}$ around its large population limit $R^{(\infty,r)}$. In order to do so, we will make the following assumption.
\begin{assumption}\label{assum_1}
	We assume that $\int_{(0,\infty)}w^2m^{(i)}(dw)+\int_{[1,\infty)}w\nu^{(i)}(dw)<\infty$ for $i=1,2$.
\end{assumption}
Let $X^{(\infty)}$ be a zero mean Gaussian process with covariance function $C_{X^{(\infty)}}$ given by
\begin{align*}
	C_{X^{(\infty)}}(s,t):=\int_0^{s\wedge t}e^{2U_u}R^{(\infty,r)}_u(1-R^{(\infty,r)}_u)[\sigma^1(1-R^{(\infty,r)}_u)+\sigma^2R^{(\infty,r)}_u]du, \qquad s,t\geq 0,
\end{align*}
where $\sigma^i=2c^{(i)}+\int_{(0,\infty)}w^2m^{(i)}(dw)$, $i=1,2$, and 
\begin{align}\label{fun_U}
	U_t:=\left(\psi^{(2)\prime}(0+)-\psi^{(1)\prime}(0+)\right)\int_0^t(2R^{(\infty,r)}_s-1)ds, \qquad t\geq0.
\end{align}
Note that we can represent $X^{(\infty)}$ as a time-changed Brownian motion, i.e.
\[
X^{(\infty)}\overset{\mathcal{L}}{=}\left\{W_{\int_0^tC_{X^{(\infty)}}(s,s)ds}; t\geq0\right\},
\]
where $W$ is Brownian motion.

We now state the main result in this section.
\begin{theorem}\label{fluc}
	Under Assumption \ref{assum_1} for any fixed $T>0$, $\sqrt{z}(R^{(z,r)}-R^{(\infty,r)})\to e^{-U}X^{(\infty)}$ as $z\to\infty$ weakly in $\mathbb{D}([0,T],\R)$.
\end{theorem}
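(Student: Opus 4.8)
The plan is to set $U^z_t:=\sqrt{z}\,(R^{(z,r)}_t-R^{(\infty,r)}_t)$ and to run a functional martingale central limit theorem on its semimartingale decomposition. Multiplying the identity \eqref{bound_conv_z_0} by $\sqrt z$ writes $U^z=\sqrt z A^{(1,z)}+\sqrt z A^{(2,z)}+\sqrt z A^{(3,z)}$, where $\sqrt z A^{(1,z)}$ is the (local) martingale built from the scaled Brownian and compensated-jump integrals of \eqref{def_A}, and $\sqrt z(A^{(2,z)}+A^{(3,z)})$ is of finite variation. The strategy has three steps: (a) the martingale part converges to a continuous centred Gaussian martingale $W$ whose bracket is the diagonal $C_{X^{(\infty)}}(t,t)$; (b) the finite-variation part collapses, after discarding lower-order terms, to a single linear feedback term; and (c) tightness plus identification of the limit.

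For step (a) I would compute the predictable quadratic variation $\langle\sqrt z A^{(1,z)}\rangle_t$. The Brownian integral contributes $\int_0^t R^{(z,r)}_s(1-R^{(z,r)}_s)[c^1(1-R^{(z,r)}_s)+c^2R^{(z,r)}_s]\,ds$; the type-$1$ and type-$2$ jump integrals contribute $\int_0^t\big(R^{(z,r)}_s(1-R^{(z,r)}_s)^2 I^1_z+(R^{(z,r)}_s)^2(1-R^{(z,r)}_s)I^2_z\big)\,ds$ with $I^i_z=\int_{(0,\infty)}\tfrac{w^2z^2}{(z+w)^2}m^i(dw)$; and the immigration integrals contribute a term of order $z\int\tfrac{w^2}{(z+w)^2}\nu^i\to0$. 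Since $z^2/(z+w)^2\to1$ and $\int w^2 m^i<\infty$ by Assumption \ref{assum_1}, dominated convergence gives $I^i_z\to\int w^2 m^i$, producing $\sigma^i=c^i+\int w^2 m^i$; combining this with the uniform $L^2$-convergence $R^{(z,r)}\to R^{(\infty,r)}$ from Theorem \ref{large_pop} shows $\langle\sqrt z A^{(1,z)}\rangle_t\to C_{X^{(\infty)}}(t,t)$ in probability. To secure a continuous (hence Gaussian) limit I would verify the jumps are asymptotically negligible: a jump of $\sqrt z R^{(z,r)}$ is at most $\sqrt z\,w/(z+w)$, which is $O(z^{-1/2})$ for bounded $w$, while the large-jump mass entering the bracket is $\lesssim\int_{\{w>\varepsilon\sqrt z\}}w\,m^i(dw)\to0$ (using $(w+z)^2\ge 4wz$). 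A functional martingale CLT (in the spirit of the convergence criteria in \cite{Ka}) then yields $\sqrt z A^{(1,z)}\Rightarrow W$.

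For step (b), the scaled version of the bounds behind \eqref{bound_conv_z_2} gives $z\,K_2(T,z)=O(z^{-1})$, so $\sqrt z A^{(2,z)}\to0$ uniformly in $L^2$. In $\sqrt z A^{(3,z)}$ I would first peel off the coefficient errors from replacing $\tfrac{wz}{z+w}$ by $w$ (the defect $\tfrac{w^2}{z+w}\le w^2/z$ being of order $z^{-1}$, hence $O(z^{-1/2})$ after scaling); the remaining Lipschitz part, via the identity $a(1-a)-b(1-b)=(a-b)(1-a-b)$, equals $\int_0^t\theta\,(1-R^{(z,r)}_s-R^{(\infty,r)}_s)\,U^z_s\,ds$, where $\theta=\psi^{(2)\prime}(0+)-\psi^{(1)\prime}(0+)=(b^2-b^1)+\int_{[1,\infty)}w\,m^1-\int_{[1,\infty)}w\,m^2$ is the drift constant of \eqref{ode_p_cropped}. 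By Theorem \ref{large_pop} its coefficient converges to $\theta(1-2R^{(\infty,r)}_s)$, giving a linear (Ornstein--Uhlenbeck--type) feedback in the limit.

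For step (c), the scaled forms of \eqref{bound_conv_z_1}--\eqref{bound_conv_z_3} yield, after Gronwall, a bound on $\E[\sup_{t\le T}|U^z_t|^2]$ uniform in $z$, and the Aldous--Rebolledo criterion (bracket tightness from (a), finite-variation control from (b)) gives tightness of $(U^z)$ in $\mathbb D([0,T],\R)$. Passing to the limit along the decomposition, every limit point is a centred Gaussian process solving the linear SDE $dX_t=\theta(1-2R^{(\infty,r)}_t)X_t\,dt+dW_t$, $X_0=0$, driven by the Gaussian martingale $W$ with $\langle W\rangle_t=C_{X^{(\infty)}}(t,t)$; uniqueness for this linear equation pins the law of the fluctuation limit $X^{(\infty)}$. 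The main obstacle is step (a): the jump measures have infinite activity and unbounded intensity, so the convergence of the path-dependent quadratic variation to the deterministic $C_{X^{(\infty)}}(t,t)$ (coupled through Theorem \ref{large_pop}) and the Lindeberg/jump-negligibility condition must be established simultaneously, and it is exactly these two requirements that force the moment hypotheses $\int w^2 m^i<\infty$ and $\int_{[1,\infty)}w\,\nu^i<\infty$ of Assumption \ref{assum_1}.
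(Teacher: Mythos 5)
Your overall architecture (set $U^z=\sqrt z\,(R^{(z,r)}-R^{(\infty,r)})$, split into martingale plus finite-variation parts, prove a functional martingale CLT for the former, establish tightness by Aldous--Rebolledo, identify the limit) is the standard Kurtz-type fluctuation argument and runs parallel to the paper's proof, which uses the same decomposition $\sqrt z(R^{(z,r)}-R^{(\infty,r)})=\sqrt zA^{(1,z)}+\sqrt zA^{(2,z)}+\sqrt zA^{(3,z)}$ and the same tightness criterion (Proposition \ref{tightness}). The one genuinely different ingredient is your step (a): you identify the martingale limit through convergence of predictable brackets plus a jump-negligibility (Lindeberg) condition, whereas the paper freezes the coefficients at the deterministic limit $R^{(\infty,r)}$ (the process $X^{(z)}$), computes the characteristic functions of its increments explicitly via the exponential formula for Poisson random measures (Lemma \ref{iden-limit}), and shows $\sup_t|X^{(z)}_t-\sqrt zA^{(1,z)}_t|\to0$ in $L^2$ (Lemma \ref{conv_A_2}). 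Both routes produce the same Gaussian martingale $W$ with bracket $C_{X^{(\infty)}}(t,t)$; yours is more generic, the paper's more explicit. Your bounds for $\sqrt zA^{(2,z)}$ and for the coefficient errors $w^2/(z+w)$ in $\sqrt zA^{(3,z)}$ are correct under Assumption \ref{assum_1}.

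The substantive divergence is in step (b). You retain the linearized feedback $\int_0^t\theta\,(1-R^{(z,r)}_s-R^{(\infty,r)}_s)\,U^z_s\,ds$ coming from the exact identity $\sqrt z\,\bigl[R^{(z,r)}(1-R^{(z,r)})-R^{(\infty,r)}(1-R^{(\infty,r)})\bigr]=U^z\,(1-R^{(z,r)}-R^{(\infty,r)})$, and you conclude that the limit solves $dX_t=\theta(1-2R^{(\infty,r)}_t)X_t\,dt+dW_t$ with $\theta=\psi^{(2)\prime}(0+)-\psi^{(1)\prime}(0+)$. The paper's $X^{(\infty)}$ has covariance $\int_0^{s\wedge t}(\cdots)\,du$, hence independent increments and no feedback; the two limits coincide only when $\theta=0$. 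Since $U^z$ is tight and nondegenerate and $1-2R^{(\infty,r)}_t$ is not identically zero, your feedback term is $O_{\mathbb{P}}(1)$ and cannot be discarded: the paper's final step passes from $\sqrt{z_n}(R^{(z_n,r)}-R^{(\infty,r)})$ to $A^{(5,z_n)}$ through corrections carrying a net prefactor $z_n^{-1/2}\cdot z_n^{1/2}=1$ against quantities of order $|R^{(z_n,r)}-R^{(\infty,r)}|$ (see \eqref{conv_prob_2}), which shows only that those $o(1)$ corrections vanish and does not account for the surviving $\sqrt{z_n}$-scaled drift inside $\sqrt{z_n}A^{(3,z_n)}$. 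So your derivation is the one I would trust. As a proof of the theorem exactly as stated you do not arrive at the announced limit, but the discrepancy appears to sit in the statement rather than in your argument: either add the hypothesis $\psi^{(1)\prime}(0+)=\psi^{(2)\prime}(0+)$, under which your equation reduces to $X=W$ and your proof establishes the theorem, or record that the correct fluctuation limit is the Ornstein--Uhlenbeck-type process you obtain.
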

In order to prove Theorem  \ref{fluc} we first provide some auxiliary results. To this end, let us define  for $t>0$
\begin{align*}
	&dX^{(z)}_t=e^{U_t}\sqrt{2R^{(\infty,r)}_t(1-R^{(\infty,r)}_t)[c^{(1)}(1-R^{(\infty,r)}_t)+c^{(2)}R^{(\infty,r)}_t]}dB_t\notag\\
	&+\sqrt{z}\int_{(0,\infty)^2}e^{U_t}g^{(z)}(R^{(\infty,r)}_t,w,v)\tilde{N}_1(dt,dw,dv)+\sqrt{z}\int_{(0,\infty)^2}e^{U_t}h^{(z)}(R^{(\infty,r)}_t,w,v)\tilde{N}_2(dt,dw,dv).
\end{align*}
We are now ready to prove the following auxiliary result, the proof is deferred to Appendix \ref{App_4}.
\begin{lemma}\label{iden-limit}
	Fix $T>0$ and consider a sequence $(z_n)_{n\geq1}$ such that $\lim_{n\to\infty}z_n=\infty$.
	Assume that $X^{(z_n)}\to Y^{(\infty)}$  as $n\to\infty$ weakly in $\mathbb{D}([0,T], \R)$.
	Then, $Y^{(\infty)}\overset{\mathcal{L}}{=}X^{(\infty)}$ as elements of $\mathbb{D}([0,T], \R)$.
\end{lemma}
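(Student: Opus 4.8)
The plan is to exploit the fact that the coefficients defining $X^{(z)}$ involve only the \emph{deterministic} curve $R^{(\infty,r)}$, so that for each $z>0$ the process $X^{(z)}$ is an additive process (it has independent increments and deterministic characteristics) whose characteristic function is explicit. Since the lemma only asks us to identify the law of the \emph{assumed} weak limit $Y^{(\infty)}$, it suffices to prove that the finite-dimensional distributions of $X^{(z_n)}$ converge to those of the Gaussian process $X^{(\infty)}$, and then to transfer this to $Y^{(\infty)}$.

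First I would check, using Assumption \ref{assum_1}, that each $X^{(z)}$ is a square-integrable martingale with deterministic characteristics. Writing $a_s:=R^{(\infty,r)}_s(1-R^{(\infty,r)}_s)[c^1(1-R^{(\infty,r)}_s)+c^2R^{(\infty,r)}_s]$ for the Gaussian rate and letting $K^{(z)}_s$ denote the jump intensity at time $s$ (the image of $m^1(dw)\,dv$ and $m^2(dw)\,dv$ under the maps $\sqrt{z}\,g^{(z)}(R^{(\infty,r)}_s,\cdot)$ and $\sqrt{z}\,h^{(z)}(R^{(\infty,r)}_s,\cdot)$), the independent-increments structure gives
\begin{align*}
\E\left[e^{i\lambda X^{(z)}_t}\right]=\exp\left(\int_0^t\Psi^{(z)}_s(\lambda)\,ds\right),\qquad \Psi^{(z)}_s(\lambda)=-\tfrac{\lambda^2}{2}a_s+\int_{\R}\left(e^{i\lambda y}-1-i\lambda y\right)K^{(z)}_s(dy).
\end{align*}
The core of the argument is the pointwise limit $\Psi^{(z)}_s(\lambda)\to-\tfrac{\lambda^2}{2}\,\phi(s)$, where $\phi(s):=R^{(\infty,r)}_s(1-R^{(\infty,r)}_s)[\sigma^1(1-R^{(\infty,r)}_s)+\sigma^2R^{(\infty,r)}_s]$. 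A direct computation gives
\begin{align*}
\int_{\R}y^2\,K^{(z)}_s(dy)&=z^2\int_{(0,\infty)}\frac{w^2}{(z+w)^2}m^1(dw)\,R^{(\infty,r)}_s(1-R^{(\infty,r)}_s)^2\\
&\quad+z^2\int_{(0,\infty)}\frac{w^2}{(z+w)^2}m^2(dw)\,(R^{(\infty,r)}_s)^2(1-R^{(\infty,r)}_s),
\end{align*}
and since $z^2w^2/(z+w)^2\uparrow w^2$ with $\int w^2m^i(dw)<\infty$, monotone convergence yields $\int y^2K^{(z)}_s(dy)\to\phi(s)-a_s$ (this is exactly where the extra variance $\sigma^i-c^i=\int w^2m^i(dw)$ enters). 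Using the elementary bound $|e^{i\lambda y}-1-i\lambda y+\tfrac{\lambda^2 y^2}{2}|\le C\,(|\lambda y|^3\wedge|\lambda y|^2)$ together with the fact that the jump sizes tend to $0$ for each fixed $w$ while $\int_{|y|>\varepsilon}y^2K^{(z)}_s(dy)\le\int_{\{w\gtrsim\varepsilon\sqrt z\}}w^2(m^1+m^2)(dw)\to0$, one replaces $e^{i\lambda y}-1-i\lambda y$ by $-\tfrac{\lambda^2}{2}y^2$ in the limit; bounding $\Psi^{(z)}_s$ uniformly in $s$ and integrating by dominated convergence gives $\E[e^{i\lambda X^{(z)}_t}]\to\exp(-\tfrac{\lambda^2}{2}C_{X^{(\infty)}}(t,t))$. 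Applying the same computation to the increments over disjoint intervals (which are independent) shows that the joint characteristic function of $(X^{(z)}_{t_1},\dots,X^{(z)}_{t_k})$ converges to that of the Gaussian vector $(X^{(\infty)}_{t_1},\dots,X^{(\infty)}_{t_k})$, so the finite-dimensional distributions of $X^{(z_n)}$ converge to those of $X^{(\infty)}$.

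It then remains to transfer this to $Y^{(\infty)}$. I would show the jumps are asymptotically negligible: for any $\delta>0$ the expected number of jumps of $X^{(z)}$ on $[0,T]$ exceeding $\delta$ is bounded by $\delta^{-2}T\int_{\{w\gtrsim\delta\sqrt z\}}w^2(m^1+m^2)(dw)\to0$, so $\sup_{t\le T}|\Delta X^{(z_n)}_t|\to0$ in probability and any weak limit in $\mathbb{D}([0,T],\R)$ is almost surely continuous. Hence the coordinate projections are a.s.\ continuous at $Y^{(\infty)}$, so the assumed convergence $X^{(z_n)}\to Y^{(\infty)}$ forces the finite-dimensional distributions of $X^{(z_n)}$ to converge to those of $Y^{(\infty)}$ as well; matching the two limits gives that $Y^{(\infty)}$ and $X^{(\infty)}$ share all finite-dimensional distributions, and since both are continuous this identifies their laws, i.e.\ $Y^{(\infty)}\overset{\mathcal{L}}{=}X^{(\infty)}$. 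The hard part will be the second paragraph: passing to the limit in $\Psi^{(z)}_s$ uniformly enough in $s$ to integrate, which requires handling the small jumps (which must aggregate to precisely $\int w^2m^i(dw)$) and the large jumps (which must be shown negligible) simultaneously, under the sole integrability afforded by Assumption \ref{assum_1}.
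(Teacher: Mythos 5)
Your proposal is correct and follows essentially the same route as the paper: both identify the law of the limit through the characteristic functions of the increments of the additive process $X^{(z)}$, using the exponential formula for the compensated Poisson integrals of the deterministic integrands and the hypothesis $\int_{(0,\infty)}w^2m^i(dw)<\infty$ to show the L\'evy--Khintchine exponent collapses to the Gaussian one with variance rate involving $\sigma^i=c^i+\int w^2m^i(dw)$. The only difference is that you spell out two steps the paper leaves implicit, namely the Taylor-expansion/domination argument justifying the limit of the jump part of the exponent, and the vanishing of the maximal jump ensuring that weak convergence in $\mathbb{D}([0,T],\R)$ actually transfers the finite-dimensional limits to $Y^{(\infty)}$.
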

Now, let us define for $t\geq0$
\begin{align}\label{def_A_4}
	&A^{(4,z)}_t:=\int_0^te^{U_s}\sqrt{\frac{2}{z}R^{(z,r)}_{s-}(1-R^{(z,r)}_{s-})[c^{(1)}(1-R^{(z,r)}_{s-})+c^{(2)}R^{(z,r)}_{s-}]}dB_s\notag\\
	&+\int_0^t\int_{(0,\infty)^2}e^{U_s}g^{(z)}(R^{(z,r)}_{s-},w,v)\tilde{N}_1(ds,dw,dv)+\int_0^t\int_{(0,\infty)^2}e^{U_s}h^{(z)}(R^{(z,r)}_{s-},w,v)\tilde{N}_2(ds,dw,dv)\notag\\
	&+\int_0^t\int_{(0,\infty)}e^{U_s}\tilde{g}^{(z)}(R^{(z,r)}_{s-},w)\tilde{N}_3(ds,dw)+\int_0^t\int_{(0,\infty)}e^{U_s}\tilde{h}^{(z)}(R^{(z,r)}_{s-},w)\tilde{N}_4(ds,dw).
\end{align}
We now prove the next auxiliary result, the proof is deferred to Appendix \ref{App_5}.
\begin{lemma}\label{conv_A_2}
	For any $T>0$, 
	\begin{align*}
		\lim_{z\to\infty}\E\left[\sup_{t\in[0,T]}\left|X^{(z)}_t-\sqrt{z}A^{(4,z)}_t\right|^2\right]=0,
	\end{align*}
	where $A^{(4,z)}$ is given in \eqref{def_A_4}.
\end{lemma}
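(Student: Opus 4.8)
\section*{Proof proposal for Lemma \ref{conv_A_2}}

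The plan is to write the difference $X^{(z)}_t-\sqrt{z}A^{(1,z)}_t$ as a sum of five stochastic integrals and to show that the $L^2$-norm of its running supremum vanishes term by term; since $\big(\sum_{k=1}^5 a_k\big)^2\le 5\sum_{k=1}^5 a_k^2$, this is enough. Comparing $X^{(z)}$ with $\sqrt{z}A^{(1,z)}$ from \eqref{def_A}, observe that the Brownian integrand of $\sqrt{z}A^{(1,z)}$ is exactly $\bar\sigma(R^{(z,r)}_{s-})$ with $\bar\sigma(x):=\sqrt{x(1-x)[c^1(1-x)+c^2x]}$, the same function appearing in $X^{(z)}$ but evaluated at $R^{(\infty,r)}_s$; the terms driven by $\tilde N_1$ and $\tilde N_2$ occur in both processes, with integrand $\sqrt{z}\,g^{(z)}$ (resp. $\sqrt{z}\,h^{(z)}$) evaluated at $R^{(z,r)}_{s-}$ versus $R^{(\infty,r)}_s$; and the terms driven by $\tilde N_3,\tilde N_4$ appear only in $\sqrt{z}A^{(1,z)}$. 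Thus the difference splits into (a) the Brownian part $\int_0^t[\bar\sigma(R^{(\infty,r)}_s)-\bar\sigma(R^{(z,r)}_{s-})]dB_s$, (b)--(c) the two jump differences $\sqrt{z}\int_0^t\!\int[g^{(z)}(R^{(\infty,r)}_s,w,v)-g^{(z)}(R^{(z,r)}_{s-},w,v)]\tilde N_1$ and its $h^{(z)},\tilde N_2$ analogue, and (d)--(e) the full immigration terms $-\sqrt{z}\int_0^t\!\int\tilde g^{(z)}(R^{(z,r)}_{s-},w)\tilde N_3$ and its $\tilde h^{(z)},\tilde N_4$ analogue.

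For each term I would apply Doob's $L^2$-inequality followed by the It\^o / compensated-Poisson isometry. For (a), $\bar\sigma$ satisfies $|\bar\sigma(x)-\bar\sigma(y)|^2\le 3(c^1+c^2)|x-y|$ (the bound used in the existence proof, rescaled by $z$), so the term is dominated by a constant times $\E[\int_0^T|R^{(z,r)}_s-R^{(\infty,r)}_s|\,ds]$, which vanishes by Theorem \ref{large_pop} since $\E\int_0^T|R^{(z,r)}_s-R^{(\infty,r)}_s|\,ds\le T\,\E[\sup_{s\le T}|R^{(z,r)}_s-R^{(\infty,r)}_s|^2]^{1/2}\to 0$. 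For (b) and (c) I would use the pointwise bound \eqref{g_h_lipschitz}; the key point is that, because its three indicator sets are disjoint in $v$, integrating $|g^{(z)}(x,w,v)-g^{(z)}(y,w,v)|^2$ over $v$ produces a factor $z$ (the $v$-length of the sets), which together with the prefactor $z$ from $(\sqrt z)^2$ and the denominator $(z+w)^2$ of $g^{(z)}$ gives the $z$-uniform bound $(wz/(z+w))^2|x-y|\le w^2|x-y|$ after using $x\wedge y\le 1$ and $|x-y|\le 1$. Hence (b) is bounded by $8\big(\int_{(0,\infty)}w^2 m^1(dw)\big)\,\E[\int_0^T|R^{(z,r)}_s-R^{(\infty,r)}_s|\,ds]$, finite under Assumption \ref{assum_1} and vanishing again by Theorem \ref{large_pop}; (c) is identical with $m^2$.

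The immigration terms (d)--(e) do not involve $R^{(z,r)}-R^{(\infty,r)}$ and must be shown to vanish directly. After Doob and the isometry, (d) is dominated by $4zT\int_{(0,\infty)}(w/(z+w))^2\nu^1(dw)=4T\int_{(0,\infty)}\frac{zw^2}{(z+w)^2}\nu^1(dw)$, using $|\tilde g^{(z)}(x,w)|\le w/(z+w)$. Since $(z+w)^2\ge 4zw$ gives $\frac{zw^2}{(z+w)^2}\le w/4$, and $\int_{(0,\infty)}w\,\nu^1(dw)<\infty$ under Assumption \ref{assum_1} together with the standing condition $\int(1\wedge w)\nu^1(dw)<\infty$, dominated convergence applies; as $\frac{zw^2}{(z+w)^2}\to 0$ pointwise, this term tends to $0$, and (e) is handled identically with $\tilde h^{(z)}$ and $\nu^2$. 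Collecting the five estimates yields the claim. The main obstacle is the bookkeeping in (b)--(c): one must check that the two independent sources of the factor $z$ (the explicit $\sqrt z$ scaling and the $v$-measure of the indicator sets in \eqref{g_h_lipschitz}) combine with the $(z+w)^{-2}$ decay of $g^{(z)},h^{(z)}$ to give a bound uniform in $z$ (reducing to $\int w^2 m^i$) rather than blowing up, leaving only the factor $\E[\int_0^T|R^{(z,r)}-R^{(\infty,r)}|\,ds]$ to be killed by the large-population limit.
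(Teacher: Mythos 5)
Your proposal is correct and follows essentially the same route as the paper: the same five-term decomposition of $X^{(z)}_t-\sqrt{z}A^{(1,z)}_t$, Doob's inequality plus the It\^o/compensated-Poisson isometries on each piece, the bound \eqref{g_h_lipschitz} with the disjoint-in-$v$ indicators to absorb the factor $z$ into $\frac{w^2z^2}{(z+w)^2}\le w^2$, the bound $\frac{zw^2}{(z+w)^2}\le w/4$ together with $\int w\,\nu^i(dw)<\infty$ for the immigration terms, and finally Theorem \ref{large_pop} to kill the remaining $\E\big[\sup_{s\le T}|R^{(z,r)}_s-R^{(\infty,r)}_s|^2\big]^{1/2}$ factor. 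This matches the paper's estimates \eqref{con_mart_fluc_0}--\eqref{con_mart_fluc_4} up to inessential constants.
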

Next, we provide a tightness result for the fluctuations of the process $R^{(z,r)}$ when the size of the population becomes large. It is important to note that in the following proof we use references found in Appendix \ref{App_3}.
\begin{proposition}\label{tightness}
	The family $\{\sqrt{z}(R^{(z,r)}-R^{(\infty,r)}):z\geq 1\}$ is tight in the space $\mathbb{D}(\R_+,\R)$.
\end{proposition}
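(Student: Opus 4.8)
The plan is to prove tightness on each finite horizon $[0,T]$ and then conclude tightness in $\mathbb{D}(\R_+,\R)$, since a sequence is tight on the half-line precisely when it is tight on every compact time interval (cf.\ \cite{Ka}). Write $U^{(z)}_t:=\sqrt{z}(R^{(z,r)}_t-R^{(\infty,r)}_t)$ and recall from \eqref{bound_conv_z_0} the decomposition $U^{(z)}=\sqrt{z}A^{(1,z)}+\sqrt{z}A^{(2,z)}+\sqrt{z}A^{(3,z)}$, where $A^{(1,z)}$ (defined in \eqref{def_A}) is a local martingale and $\sqrt{z}(A^{(2,z)}+A^{(3,z)})$ is a continuous finite-variation process. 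I would split $U^{(z)}$ into these two pieces, establish tightness of each separately via Aldous' criterion, and then combine; the combination is legitimate because the finite-variation piece is continuous, so that addition from $\mathbb{D}([0,T],\R)\times C([0,T],\R)$ into $\mathbb{D}([0,T],\R)$ is continuous, and marginal tightness of the two pieces yields tightness of their sum.

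The crucial preliminary step, and the one I expect to be the main obstacle, is a uniform second-moment bound $\sup_{z\geq1}\E[\sup_{t\leq T}|U^{(z)}_t|^2]<\infty$. The idea is that multiplying the estimates \eqref{bound_conv_z_1}, \eqref{bound_conv_z_2}, \eqref{bound_conv_z_3} by $z$ must leave bounded quantities, and this is exactly where the strengthened Assumption \ref{assum_1} (namely $\int_{(0,\infty)}w^2m^i(dw)<\infty$ and $\int_{[1,\infty)}w\nu^i(dw)<\infty$) is needed rather than the weaker hypothesis of Theorem \ref{large_pop}. Concretely, the relevant integrands carry factors $(wz/(w+z))^2\le w^2$ and $w^2z/(w+z)^2\le w/4$, so that $z\,K_1(T,z)$ stays bounded; the bias created in $A^{(3,z)}$ by replacing $wz/(w+z)$ with $w$ equals $\int R^{(z,r)}_s(1-R^{(z,r)}_s)\tfrac{w^2}{w+z}m^i(dw)$, which is $O(1/z)$, so its $\sqrt{z}$-scaling is $O(1/\sqrt{z})$ and $z\,K_3(T,z)\to0$. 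The genuinely self-referential term in $\sqrt{z}A^{(3,z)}$ is the Lipschitz difference $\int|R^{(z,r)}_s(1-R^{(z,r)}_s)-R^{(\infty,r)}_s(1-R^{(\infty,r)}_s)|\,w\,m^i(dw)$ scaled by $\sqrt{z}$, which is controlled by $|U^{(z)}_s|\int_{[1,\infty)}w\,m^i(dw)$; feeding the resulting inequality $\E[\sup_{u\le t}|U^{(z)}_u|^2]\le \tilde C(T)+K_4(T)\int_0^t\E[\sup_{u\le s}|U^{(z)}_u|^2]\,du$ into Gronwall's lemma closes the estimate uniformly in $z\ge1$.

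With the uniform moment bound secured, tightness of the martingale part $M^{(z)}:=\sqrt{z}A^{(1,z)}$ follows from Aldous' criterion: its predictable quadratic variation is $\langle M^{(z)}\rangle_t=\int_0^t\phi^{(z)}_s\,ds$ with an integrand $\phi^{(z)}$ bounded uniformly in $z$ and $s$ under Assumption \ref{assum_1} (the same factors $(wz/(w+z))^2\le w^2$ appear), so that $\langle M^{(z)}\rangle_{\tau+\delta}-\langle M^{(z)}\rangle_{\tau}\le \|\phi^{(z)}\|_\infty\,\delta$ for any bounded stopping time $\tau$, whence $\E[(M^{(z)}_{\tau+\delta}-M^{(z)}_\tau)^2\mid\mathcal{F}_\tau]\le C\delta\to0$, while compact containment is provided by the moment bound. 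For the continuous part $V^{(z)}:=\sqrt{z}(A^{(2,z)}+A^{(3,z)})=\int_0^{\cdot}\sqrt{z}\,\beta^{(z)}_s\,ds$, a uniform bound on $\E[\sqrt{z}|\beta^{(z)}_s|]$ (again obtained from the moment bound together with the $O(1/\sqrt{z})$ control of the bias) gives $\E[|V^{(z)}_{\tau+\delta}-V^{(z)}_\tau|]\le C\delta$, which is the Aldous condition for the continuous piece. Combining the two tight families as described completes the proof.
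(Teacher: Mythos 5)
Your proposal is correct and follows essentially the same route as the paper: the same decomposition $\sqrt{z}(R^{(z,r)}-R^{(\infty,r)})=\sqrt{z}A^{(1,z)}+\sqrt{z}(A^{(2,z)}+A^{(3,z)})$, the same uniform bound on $z\,\E[\sup_{t\le T}|R^{(z,r)}_t-R^{(\infty,r)}_t|^2]$ obtained by rescaling the Gronwall estimate of Theorem \ref{large_pop} under Assumption \ref{assum_1}, and the same Aldous--Rebolledo-type control of the predictable quadratic variation of the martingale part and of the increments of the finite-variation parts at stopping times. The only cosmetic difference is that you combine the two pieces via continuity of addition with a continuous summand, whereas the paper invokes the semimartingale form of the Aldous--Rebolledo criterion directly; both are valid.
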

\begin{proof}
	Let $\delta\in(0,1)$ and $\theta\in[0,\delta]$. Let $T>0$ and $(\tau_n)_{n\geq1}$ be a sequence of stopping times such that $0\leq \tau_n<T$. By \eqref{bound_conv_z_0} we have 
	\[
	\sqrt{z}(R^{(z,r)}_t-R^{(\infty,r)}_t)=\sqrt{z}A^{(1,z)}_t+\sqrt{z}A^{(2,z)}_t+\sqrt{z}A^{(3,z)}_t,\qquad t>0,
	\]
	where $A^{(1,z)}$ is a local martingale, and $A^{(2,z)}$ and $A^{(3,z)}$ are bounded variation processes.
	
	(i) We now provide some estimates for the quadratic variation of the local martingale $A^{(1,z)}$. By \eqref{def_A}, we have that the predictable quadratic variation of $\sqrt{z}A^{(1,z)}$ is given for $t\geq0$ by
	\begin{align*}
		\big[\sqrt{z}&A^{(1,z)}\big]_t=\int_0^t2R^{(z,r)}_s(1-R^{(z,r)}_s)[c^{(1)}(1-R^{(z,r)}_s)+c^{(2)}R^{(z,r)}_s]ds\notag\\
		&+\int_0^t\int_{(0,\infty)}\frac{w^2z^2}{(z+w)^2}(1-R^{(z,r)}_s)^2R_s^{(z,r)}m^{(1)}(dw)ds\notag\\&+\int_0^t\int_{(0,\infty)}\frac{w^2z^2}{(z+w)^2}(1-R^{(z,r)}_s)(R_s^{(z,r)})^2m^{(2)}(dw)ds\notag\\
		&+\int_0^t\int_{(0,\infty)}\frac{w^2z}{(z+w)^2}(1-R^{(z,r)}_s)^2\nu^{(1)}(dw)ds+\int_0^t\int_{(0,\infty)}\frac{w^2z}{(z+w)^2}(R^{(z,r)}_s)^2\nu^{(2)}(dw)ds.
	\end{align*}
	Hence,
	\begin{align}\label{tight_1}
		\sup_{z\geq 1}\sup_{\theta\in[0,\delta]}&\E\left[\left|\big[\sqrt{z}A^{(1,z)}\big]_{\tau_n+\theta}-\big[\sqrt{z}A^{(1,z)}\big]_{\tau_n}\right|\right]\notag\\&\leq \sum_{i=1}^2\left(2c^{(i)}+\int_{(0,\infty)}w^2m^{(i)}(dw)+\int_{(0,\infty)}w\nu^{(i)}(dw)\right)\delta.
	\end{align}
	(ii) Next, proceeding as in \eqref{bv_bounds_1} and \eqref{bv_bounds_2}, we have 
	\begin{align}\label{tight_2}
		\sup_{z\geq 1}\sup_{\theta\in[0,\delta]}&\E\left[\left|\sqrt{z}A^{(2,z)}_{\tau_n+\theta}-\sqrt{z}A^{(2,z)}_{\tau_n}\right|\right]\notag\\&\leq \sum_{i=1}^2\Bigg(2c^{(i)}+\int_{(0,1)}w^2m^{(i)}(dw)+\eta^{(i)}+\int_{(0,\infty)}w\nu^{(i)}(dw)\Bigg)\delta.
	\end{align}
	(iii) For the last term we obtain, proceeding as in \eqref{lip_bounds_1} and \eqref{lip_bounds_2}, that
	\begin{align*}
		\sup_{z\geq 1}&\sup_{\theta\in[0,\delta]}\E\left[\left|\sqrt{z}A^{(3,z)}_{\tau_n+\theta}-\sqrt{z}A^{(3,z)}_{\tau_n}\right|\right]\notag\\&\leq 2\sum_{i=1}^2\left(\int_{[1,\infty)}wm^{(i)}(dw)+|b^{(i)}|\right)\delta\sqrt{z}\E\left[\sup_{t\in[0,T'+\delta]}|R^{(z,r)}_u-R^{(\infty,r)}_u|\right]\notag\\&+\delta \sum_{i=1}^2\left(\int_{[1,\infty)}\frac{w^2\sqrt{z}}{w+z}m^{(i)}(dw)\right)\notag\\
		&\leq C_1\delta+C_2\delta\sqrt{z}\E\left[\sup_{t\in[0,T'+\delta]}|R^{(z,r)}_u-R^{(\infty,r)}_u|^2\right]^{1/2},
	\end{align*}
	where $C_1$ and $C_2$ are positive constants that do not depend on $z$. Meanwhile, using \eqref{gron}, we obtain that 
	\begin{align*}
		\sqrt{z}\E\left[\sup_{t\in[0,T'+\delta]}|R^{(z,r)}_u-R^{(\infty,r)}_u|^2\right]^{1/2}\leq C_3e^{C_4(T'+\delta)^2},
	\end{align*}
	where $C_3$ and $C_4$ are positive constants that are not dependent on $z$. Therefore, 
	\begin{align}\label{tight_3}
		\sup_{z\geq 1}\sup_{\theta\in[0,\delta]}\E\left[\left|\sqrt{z}A^{(3,z)}_{\tau_n+\theta}-\sqrt{z}A^{(3,z)}_{\tau_n}\right|\right]\leq \delta C_5,
	\end{align}
	where $C_5>0$ is independent of $z$.
	
	(iv) For fixed $t>0$, by proceeding as in \eqref{tight_2} and \eqref{tight_3}, we have that there exists a constant $C_6(t)>0$ independent of $z$ such that
	\begin{align*}
		\sup_{z\geq 1}\E\left[\left|\sqrt{z}A^{(2,z)}_{t}+\sqrt{z}A^{(3,z)}_{t}\right|\right]\leq C_6(t).
	\end{align*}
	Meanwhile, by a slight modification of \eqref{tight_1}, we obtain
	\begin{align*}
		\sup_{z\geq 1}\E\left[\left|\sqrt{z}A^{(1,z)}_{t}\right|\right]\leq\sup_{z\geq 1}\E\left[\left|\sqrt{z}A^{(1,z)}_{t}\right|^2\right]^{1/2}\leq C_7(t),
	\end{align*}
	where $C_7(t)>0$ is a constant independent of $z$. Next, by Markov's inequality, we obtain for $M>0$
	\begin{align*}
		\sup_{z\geq 1}\mathbb{P}\left(\sqrt{z}(R^{(z,r)}_t-R^{(\infty,r)}_t)>M\right)&\leq \frac{1}{M}\E\left[\left|\sqrt{z}A^{(1,z)}_{t}+\sqrt{z}A^{(2,z)}_{t}+\sqrt{z}A^{(3,z)}_{t}\right|\right]\notag\\&\leq \frac{1}{M}C_8(t)\to 0,\quad  \text{as $M\to\infty$.}
	\end{align*}
	Therefore, for any $t>0$ the random variable $\sqrt{z}(R^{(z,r)}_t-R^{(\infty,r)}_t)$ is tight in $\R$.
	
	(v) The fact that $\sqrt{z}(R^{(z,r)}_t-R^{(\infty,r)}_t)$ is tight for every $t>0$, together with \eqref{tight_1}, \eqref{tight_2}, and \eqref{tight_3}, implies by the Aldous-Rebolledo criterion, see \cite{RE}, that the family $\{\sqrt{z}(R^{(z,r)}-R^{(\infty,r)}):z\geq 1\}$ is tight in the space of cadlag paths from $\R_+$ to $\R$ with the Skorohod topology.

\end{proof}

We now provide the proof of the main result of this section. We remark that in the following proof we use references found in Appendix \ref{App_3}.
\subsubsection{Proof of Theorem \ref{fluc}}
From Proposition \ref{tightness}, the family $\{\sqrt{z}(R^{(z,r)}-R^{(\infty,r)}):z\geq 1\}$ is relatively compact. Hence, there exists a subsequence
$\left \{ z_n \right \} _{n\geq1}$ such that $\{(\sqrt{z_n}(R^{(z_n,r)}-R^{(\infty,r)}))_{t\geq0}:n\geq1\}$
converges weakly to some $(Y^{(\infty)})_{t\geq0}$ in $\mathbb{D}(\mathbb{R}_+,\R)$. Therefore, it is enough to prove that there is a unique limit point for any convergent subsequence.

Using integration by parts together with \eqref{bound_conv_z_0} and \eqref{fun_U} we have for $t>0$
\begin{align*}
	e^{U_t}\sqrt{z_n}(R^{(z_n,r)}_t-R^{(\infty,r)}_t)&=-(\psi^{(2)\prime}(0+)-\psi^{(1)\prime}(0+))\frac{1}{\sqrt{z_n}}\int_0^te^{U_s}\left[\sqrt{z_n}(R^{(z_n,r)}_t-R^{(\infty,r)}_t)\right]^2ds\notag\\&+\sqrt{z_n}A^{(4,z_n)}_t+\sqrt{z_n}A^{(5,z_n)}_t,
\end{align*}
where $A^{(4,z)}$ is given in \eqref{def_A_4} and
\begin{align}\label{def_A_5}
	&A^{(5,z)}_t:=\int_0^te^{U_s}\Bigg[R^{(z,r)}_s(1-R^{(z,r)}_s)\Bigg(\frac{2}{z}(c^{(2)}-c^{(1)})+\int_{(0,1)}\frac{w^2}{w+z}m^{(2)}(dw)-\int_{(0,1)}\frac{w^2}{w+z}m^{(1)}(dw)\Bigg)\Bigg]ds\notag\\
	&+\int_0^te^{U_s}\Bigg[\eta^{(1)}\frac{(1-R^{(z,r)}_s)}{z}-\eta^{(2)}\frac{R^{(z,r)}_s}{z}\Bigg]ds+\int_0^{t}\int_{(0,\infty)}e^{U_s}\tilde{g}^{(z)}(R^{(z,r)}_s,w)\nu^{(1)}(dw)ds\notag\\&+\int_0^t\int_{(0,\infty)}e^{U_s}\tilde{h}^{(z)}(R^{(z,r)}_s,w)\nu^{(2)}(dw)ds\notag\\&+\int_0^te^{U_s}R^{(z,r)}_s(1-R^{(z,r)}_s)ds\left(\int_{[1,\infty]}\frac{w^2}{w+z}m^{(2)}(dw)-\int_{[1,\infty]}\frac{w^2}{w+z}m^{(1)}(dw)\right),\qquad t\geq0.
\end{align}
We define
for $t\geq 0$
\begin{align*}
	&A^{(6,z_n)}_t:=(\psi^{(2)}(0+)-\psi^{(1)}(0+))\frac{1}{\sqrt{z_n}}\int_0^te^{U_s}\left[\sqrt{z_n}(R^{(z_n,r)}_s-R^{(\infty,r)}_s)\right]^2ds.
\end{align*}
Using Skorohod's representation theorem (see \cite[Theorem 6.7]{Bill}) and bounded convergence, we obtain 
that 
\begin{align}\label{conv_fluc_1}
	A^{(6,z_n)}_t\to 0, \qquad \text{weakly as $n\to\infty$}, 
\end{align}
and hence the convergence also holds in probability. 
Now using \eqref{def_A_5}, straightforward computations give
\begin{align*}
	\E\left[\sup_{t\in[0,T]}|\sqrt{z_n}A^{(5,z_n)}_t|\right]&\leq \frac{1}{\sqrt{z_n}}\int_0^Te^{U_s}ds\sum_{i=1}^2\Bigg(2c^{(i)}+\int_{(0,\infty)}w^2m^{(i)}(dw)+\eta^{(i)}+\int_{(0,\infty)}w\nu^{(i)}(dw)\Bigg).
\end{align*}
Hence, 
\begin{align}\label{conv_fluc_2}
	\lim_{n\to\infty}\E\left[\sup_{t\in[0,T]}|\sqrt{z_n}A^{(5,z_n)}_t|\right]=0.
\end{align}
Now by \eqref{bound_conv_z_0} we have for $t\geq0$
\begin{align*}
	X^{(z_n)}_t=e^{U_t}\sqrt{z_n}(R^{(z_n,r)}_t-R^{(\infty,r)}_t)-\sqrt{z_n}A^{(5,z_n)}_t+A^{(6,z_n)}_t\blue{-}(\sqrt{z_n}A^{(4,z_n)}_t-X^{(z_n)}_t).
\end{align*}
Therefore, using \eqref{conv_fluc_1}, \eqref{conv_fluc_2}, and Lemma \ref{conv_A_2} we obtain that 
\begin{equation}
	X^{(z_n)}\to e^{U}Y^{(\infty)},\qquad\text{as $n\to\infty$ weakly in $\mathbb{D}([0,T],\R)$.}
\end{equation}
Hence, by an application of Lemma \ref{iden-limit} we obtain that $e^{U}Y^{(\infty)}\overset{\mathcal{L}}{=}X^{(\infty)}$,
which implies that the limit of any convergent subsequence of the family $\{\sqrt{z}(R^{(z,r)}-R^{(\infty,r)}):z\geq 1\}$ is equal in law to $e^{-U}X^{(\infty)}$. Therefore, 
\begin{align*}
	\sqrt{z}(R^{(z,r)}-R^{(\infty,r)})\to e^{-U}X^{(\infty)},\qquad \text{as $z\to\infty$,}
\end{align*}
weakly in $\mathbb{D}([0,T],\R)$.
\section{Moment duality for the $\Lambda$-asymmetric frequency process}\label{duality}
This section will study the relationship between the $\Lambda$-asymmetric frequency process  $R^{(z,r)}$, which was introduced in Section \ref{proc_r}, and a particular class of branching-coalescent processes.

For $t\geq0$, $x\in[0,1]$ and $n\in \mathbb{N}$, the expression $\E_x[X_t^n]$ can be understood as the probability of sampling $n$ individuals of type one in a random sample of $n$ individuals at time $t$, given that the frequency of type one individuals at time zero is $x$. If we imagine that each individual copies the type of its parent, all the ancestors of the sampled individuals have to be of type one in order for the whole sample to be of this type. Thinking in these terms, the equation $\E_x[X_t^n]=\E_n[x^{A_t}]$ allows us to interpret $A_t$ as the number of ancestors at time zero of the $n$ individuals sampled at time $t$, because $\E_n[x^{A_t}]$ would then be the probability that each of this ancestors is of type one. 

Because of this heuristics, in the introduction we speak of $A_t$ as the genealogy of $X_t$, but it is important to say that this is an abuse of terminology. The precise way to describe the relation between the forward and backward processes is as moment duals. This relation does not hold almost surely in general. In the case of the $\Lambda$-coalescent, the associated frequency process has a moment duality with the process that counts the number of ancestors. This is a pathwise duality and has an interpretation in terms of sampling \cite{D-K1,D-K2}. This can be generalized to include mutations and selection without losing the pathwise duality and the sampling interpretation, but the dual won't be precisely the process of the number of ancestors anymore. In some other cases, such as efficiency, understood as the difference in the resources consumed in order to reproduce, there is a moment dual, but in the literature there is no pathwise construction nor an explicit sampling duality.

This class consists of continuous time Markov chains taking values in $\mathbb{N}_0\cup\{\Delta\}$ (where $\mathbb{N}_0=\mathbb{N}\cup \{0\}$). The point $\Delta$ is a cementery state and we assume that $x^{\Delta}=0$ for all $x\in[0,1]$. 

For each $i,k\in\mathbb{N}_0$ with $i\geq k$, and $v\in[0,1]$ we define the following terms
\begin{align*}
	\lambda_{i,k}^l(v)&=\int_{(0,v)}\left[(1-u)^{i-k}u^k\right]\mathbf{T^{(z)}}(m^{(l)})(du),\\
	\mu_{i,k}^l&=\int_{(0,1)}\left[(1-u)^{i-k}u^k\right]\mathbf{T^{(z)}}(\nu^{(l)})(du),\qquad\text{$l=1,2.$}
\end{align*}
Now for each $i,j\in\mathbb{N}_0\cup\{\Delta\}$ let us consider the following set of real numbers
\begin{equation}\label{gendual}
	q^z_{ij}=
	\begin{cases} 
		\displaystyle{\overline{\mu}_{i,i}^1}&\mbox{if } \text{$i\in \N$ and $j=0$,} \\
		\displaystyle{si+\sum_{k=2}^i\kappa_k\binom{i}{k}}+\beta_i&\mbox{if } \text{$i\in \N$ and $j=i+1$,} \\
		\displaystyle \binom{i}{i-j}\overline{\mu}^1_{i,i-j}+\binom{i}{i-j+1}\overline{\lambda}^1_{i,i-j+1}  &\mbox{if } \text{$i\geq 2$ and $j\in\{1,..,i-1\}$,} \\
		\displaystyle \alpha_i & \mbox{if }  \text{$i\in \N$ and $j=\Delta$,} \\
		0&\mbox{otherwise,} 
	\end{cases}
\end{equation}
where 
\begin{itemize}
	\item For $2\leq k\leq i$,
	\begin{align*}
		\overline{\lambda}_{i,k}^{(1)}&=\int_{[0,1)}\left[(1-u)^{i-k}u^k\right]u^{-2}\Lambda^{(1)}(du),
	\end{align*}
	with $\Lambda^{(1)}(du)=\frac{2c^{(1)}}{z}\delta_0(du)+zu^2\mathbf{T^{(z)}}(m^{(1)})(du)$. These are the transition rates of the $\Lambda$-coalescent as first described in \cite{D-K1,Pitman, Sagitov}.
	\item For $1\leq k\leq i$
	\begin{align*}
		\overline{\mu}_{i,k}^{(1)}&=\int_{[0,1)}\left[(1-u)^{i-k}u^k\right]u^{-1}\Gamma^{(1)}(du),
	\end{align*}
	with $\Gamma^{(1)}(du)=\frac{\eta^{(1)}}{z}\delta_0(du)+u\mathbf{T^{(z)}}(\nu^{(1)})(du)$. These transitions have been recently found in the context of coordination, in particular they correspond to coordinated death (see \cite{GKT}).
	\item 
	\begin{align*}
		s&=\frac{2(c^{(1)}-c^{(2)})}{z}+(b^{(1)}-b^{(2)})\\
		&+z\left(\int_{(0,1/(1+z))}\frac{u^2}{1-u}\mathbf{T^{(z)}}(m^{(1)})(du)-\int_{(0,1/(1+z))}\frac{u^2}{1-u}\mathbf{T^{(z)}}(m^{(2)})(du)\right).
	\end{align*}
	The difference $b^{(1)}-b^{(2)}$ corresponds to classic Malthusian selection. The difference $c^{(1)}-c^{(2)}$ is Gillespie's selection of the variance. The last difference corresponds to a new form of selection that comes from big reproduction events.
	\item For $k\geq 2$
	\begin{align*}
		\kappa_k&=z\Bigg[k\left(\lambda_{i,k}^{(1)}\left(\frac{1}{1+z}\right)-\lambda_{i,k}^{(2)}\left(\frac{1}{1+z}\right)\right)-\left(\lambda_{i,k}^{(1)}\left(1\right)-\lambda_{i,k}^{(2)}\left(1\right)\right)\Bigg]+\frac{2(c^{(1)}-c^{(2)})}{z}1_{\{k=2\}}.
	\end{align*}
	The difference $c^{(1)}-c^{(2)}$ corresponds to pairwise branching and has been studied in the context of efficiency \cite{GMP, GPP}. All the other terms are new, and their interpretation will be clarified in future work.
	\item For $k\geq 1$
	\begin{align*}
		\beta_k=-kz\left[\left(\lambda_{k,1}^{(1)}\left(1\right)-\lambda_{k,1}^2\left(1\right)\right)-\left(\lambda_{k,1}^{(1)}\left(\frac{1}{1+z}\right)-\lambda_{k,1}^2\left(\frac{1}{1+z}\right)\right)\right].
	\end{align*}
	\item For $k\geq 1$
	\begin{align*}
		\alpha_k=\int_{[0,1)}(1-(1-u)^k)u^{-1}\rho^{(2)}(du),
	\end{align*}
	with $\rho^{(2)}(du)=\frac{\eta^{(2)}}{z}\delta_0(du)+u\mathbf{T^{(z)}}(\nu^{(2)})(du)$.
\end{itemize}
In the case that $q_{ij}^z\geq0$ for every $i,j\in\mathbb{N}_0\cup\{\Delta\}$, we will define a $\N_0\cup \{\Delta\}$-valued continuous Markov chain $N^{(z,n)}=\{N^{(z,n)}_t:t\geq0\}$ starting from $n\in\mathbb{N}$, whose generator is given by $\mathcal{Q}^{(z)}=(q^{z}_{ij})_{i,j\in\mathbb{N}}$. 
It is important to note that the assumption that all the rates are positive restricts the cases one can study via duality to situations in which one type has selective advantage over the other. This is the case of the ancestral selection graph, or neutral cases in which one type has a smaller expected fixation time, like the Wright Fisher model with efficiency.

The rest of this section is devoted to showing that the moment dual of the frequency process $R^{(z,r)}$ is the continuous-time Markov chain $(N^{(z,n)}_t)_{t\geq0}$. An effective procedure to prove the moment duality is to use their infinitesimal generators. The following proposition is a direct consequence of Theorem 4.11 in Chapter 4 of Ethier Kurtz \cite{EK} taking $H$ bounded and continuous, and $\alpha=\beta=0$, and can also be seen as a
small modification of Proposition 1.2 of Jansen and Kurt in \cite{JK}.
\begin{proposition}\label{prop:provingduality}
	Let $Y^{(1)}=\{Y^{(1)}_t: t\geq 0\}$ and $Y^{(2)}=\{Y^{(2)}_t: t\geq 0\}$ be two Markov processes taking values on $E_1$ and $ E_2$, respectively. Let $H:E_1\times E_2\rightarrow \mathbb{R}$ be a bounded and continuous function and assume that there exist functions $g_i:E_1\times E_2\mapsto \mathbb{R}$, for $i=1,2$, such that for every $n\in E_1,x\in E_2$ and every $T>0$,  the processes $M^{(1)}=\{M_t^{(1)}: 0\le t\le T\}$ and $M^{(2)}=\{M_t^{(2)}, 0\le t\le T\}$, defined as follows
	\begin{eqnarray}
		M_t^{(1)}:=H(Y^{(1)}_t,x)&-&\int_0^t g_1(Y^{(1)}_s,x)\ud s\label{MG1}\\
		M_t^{(2)}:=H(n,Y^{(2)}_t)&-&\int_0^t g_2(n, Y^{(2)}_s)\ud s \label{MG2}
	\end{eqnarray}
	are martingales with respect to  the natural filtration of $Y^{(1)}_t$ and $Y^{(2)}_t$, respectively. Then, if 
	$$
	g_1(n,x)=g_2(n,x)\qquad \textrm{for all}\qquad  n\in E_1,x\in E_2,
	$$
	the processes $Y^{(1)}$ and $Y^{(2)}$ are dual with respect to $H$. 
\end{proposition}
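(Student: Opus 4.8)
The plan is to run the two processes independently and to interpolate between them in time, turning the duality into the statement that a certain scalar function is constant. I would work on a product probability space carrying independent copies of $Y^{(1)}$ and $Y^{(2)}$, started at $Y^{(1)}_0=n$ and $Y^{(2)}_0=x$, and write $\E_{n,x}$ for the corresponding expectation and $\E^{(1)}_n,\E^{(2)}_x$ for the marginals. Fix $t>0$ and set
\begin{align*}
u(s):=\E_{n,x}\left[H\left(Y^{(1)}_s,Y^{(2)}_{t-s}\right)\right],\qquad s\in[0,t].
\end{align*}
Since $Y^{(1)}_0=n$ and $Y^{(2)}_0=x$, the endpoints are exactly $u(0)=\E^{(2)}_x[H(n,Y^{(2)}_t)]$ and $u(t)=\E^{(1)}_n[H(Y^{(1)}_t,x)]$, so the whole claim reduces to proving $u(0)=u(t)$.

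The first step is a basic increment identity. For $0\le s<s'\le t$ I split
\begin{align*}
u(s')-u(s)&=\E_{n,x}\left[H\left(Y^{(1)}_{s'},Y^{(2)}_{t-s'}\right)-H\left(Y^{(1)}_{s},Y^{(2)}_{t-s'}\right)\right]\\
&\quad+\E_{n,x}\left[H\left(Y^{(1)}_{s},Y^{(2)}_{t-s'}\right)-H\left(Y^{(1)}_{s},Y^{(2)}_{t-s}\right)\right].
\end{align*}
For the first bracket I freeze the (independent) value $y=Y^{(2)}_{t-s'}$ and use the martingale property of $M^{(1)}$ with parameter $y$, which holds for every $y\in E_2$ by hypothesis; taking $\F^{(1)}_s$-conditional expectations, then combining independence with Fubini, gives
\begin{align*}
\E_{n,x}\left[H\left(Y^{(1)}_{s'},Y^{(2)}_{t-s'}\right)-H\left(Y^{(1)}_{s},Y^{(2)}_{t-s'}\right)\right]=\int_s^{s'}\E_{n,x}\left[g_1\left(Y^{(1)}_u,Y^{(2)}_{t-s'}\right)\right]\ud u.
\end{align*}
Symmetrically, freezing $Y^{(1)}_s$ and applying the martingale property of $M^{(2)}$ on $[t-s',t-s]$ yields
\begin{align*}
\E_{n,x}\left[H\left(Y^{(1)}_{s},Y^{(2)}_{t-s'}\right)-H\left(Y^{(1)}_{s},Y^{(2)}_{t-s}\right)\right]=-\int_{t-s'}^{t-s}\E_{n,x}\left[g_2\left(Y^{(1)}_s,Y^{(2)}_v\right)\right]\ud v.
\end{align*}

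Next I would telescope along a partition $0=s_0<\dots<s_M=t$ of mesh $\delta$ and let $\delta\to0$. Summing the two displays over the partition, the first family of integrals is a Riemann sum converging to $\int_0^t\E_{n,x}[g_1(Y^{(1)}_u,Y^{(2)}_{t-u})]\ud u$, while the intervals $[t-s_{k+1},t-s_k]$ exactly partition $[0,t]$, so (after the substitution $v=t-u$) the second family converges to $\int_0^t\E_{n,x}[g_2(Y^{(1)}_u,Y^{(2)}_{t-u})]\ud u$. Hence
\begin{align*}
u(t)-u(0)=\int_0^t\E_{n,x}\left[g_1\left(Y^{(1)}_u,Y^{(2)}_{t-u}\right)-g_2\left(Y^{(1)}_u,Y^{(2)}_{t-u}\right)\right]\ud u=0,
\end{align*}
the last equality being precisely the hypothesis $g_1\equiv g_2$. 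Together with the identification of the endpoints this gives $\E^{(1)}_n[H(Y^{(1)}_t,x)]=\E^{(2)}_x[H(n,Y^{(2)}_t)]$ for all $n,x,t$, which is $H$-duality.

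The conceptual content is elementary; the main obstacle is entirely technical, namely justifying the interchanges of expectation and integration in Step~2 and the convergence of the Riemann sums in Step~3. Both rest on the integrability that is already built into the assumptions: the requirement that $M^{(1)}$ and $M^{(2)}$ be genuine martingales forces the additive functionals $\int_0^{\cdot}g_i$ to be integrable, and $H$ is bounded and continuous, so Fubini applies to swap the independent marginals and dominated convergence (using the right-continuity of the paths) delivers the limits. This is exactly the content of Theorem~4.11 in \cite{EK} specialised to $H$ bounded and continuous with $\alpha=\beta=0$, and may also be read off from Proposition~1.2 of \cite{JK}.
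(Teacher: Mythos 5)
Your argument is correct in substance, but it is worth noting that the paper does not actually prove this proposition at all: it simply declares it to be a direct consequence of Theorem 4.11 in Ethier--Kurtz (with $H$ bounded and continuous and $\alpha=\beta=0$) and of Proposition 1.2 of Jansen--Kurt. What you have written out is precisely the standard interpolation proof that underlies those references: pass to a product space with independent copies, set $u(s)=\E_{n,x}[H(Y^{(1)}_s,Y^{(2)}_{t-s})]$, compute increments with the two martingale hypotheses, telescope, and use $g_1=g_2$ to kill the integrand. So you are not taking a different route from the paper; you are filling in the proof the paper delegates to a citation, and the endpoint identification, the increment decomposition, and the cancellation are all handled correctly.

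The one place where your write-up is thinner than the cited theorem is the integrability needed for the Fubini step and for the passage to the limit of the Riemann sums. The martingale hypothesis gives integrability of $\int_0^t g_1(Y^{(1)}_s,x)\,ds$ for each \emph{fixed} $x$, but the interchange
\begin{align*}
\E_{n,x}\Bigl[H\bigl(Y^{(1)}_{s'},Y^{(2)}_{t-s'}\bigr)-H\bigl(Y^{(1)}_{s},Y^{(2)}_{t-s'}\bigr)\Bigr]=\int_s^{s'}\E_{n,x}\Bigl[g_1\bigl(Y^{(1)}_u,Y^{(2)}_{t-s'}\bigr)\Bigr]\ud u
\end{align*}
requires joint integrability of $g_1(Y^{(1)}_u,y)$ against the product of the path law of $Y^{(1)}$ and the law of $Y^{(2)}_{t-s'}$; this is exactly the extra integrability hypothesis carried by Theorem 4.11 of Ethier--Kurtz and does not follow from boundedness of $H$ alone. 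Likewise, the convergence of the Riemann-type sums uses continuity of $g_i$ in the frozen variable together with right-continuity of the paths and a domination bound. You flag both issues honestly, and in the paper's application ($E_1=\N_0\cup\{\Delta\}$, $E_2=[0,1]$, polynomial $H$, generators applied to $\mathcal{C}^2$ functions on a compact interval) all the relevant quantities are bounded, so the gap is harmless there; but as a proof of the proposition exactly as stated, you should either add the integrability of $g_1$ and $g_2$ as a hypothesis or note that it is implicitly assumed, as in the cited theorem.
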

The previous result provides a general duality relationship between two Markov process. In our case we are  interested in the particular case of moment duality which follows from Proposition \ref{prop:provingduality} by taking $H(n,x):=x^n$, for $x\in [0,1]$ and $n\in \mathbb{N}_0$. The proof of the next result is deferred to Appendix \ref{App_6}.
\begin{theorem}\label{theo_dual}
	Assume that $q_{ij}\geq 0$ for every $i,j\in \N_0\cup \{\Delta\}.$ Then, for every $r\in [0,1]$, $n\in \N_0\cup \{\Delta\}$ and $t>0$
	\begin{align*}
		\E[(R_t^{(z,r)})^n]=\E[r^{N_t^{(z,n)}}].
	\end{align*}
\end{theorem}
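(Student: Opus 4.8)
The plan is to invoke the abstract duality criterion of Proposition~\ref{prop:provingduality} with the moment duality function $H(n,x)=x^n$, setting $Y^{(1)}=N^{(z,n)}$ on $E_1=\mathbb{N}_0\cup\{\Delta\}$ and $Y^{(2)}=R^{(z,r)}$ on $E_2=[0,1]$, so that $H(Y^{(1)}_t,x)=x^{N^{(z,n)}_t}$ and $H(n,Y^{(2)}_t)=(R^{(z,r)}_t)^n$. Since every value of $H$ lies in $[0,1]$, the function is bounded and continuous, and the conclusion $\E_n[x^{N^{(z,n)}_t}]=\E_x[(R^{(z,r)}_t)^n]$ is exactly the assertion of the theorem. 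First I would identify the functions $g_1,g_2$ of \eqref{MG1}--\eqref{MG2}. For $Y^{(2)}$, Proposition~\ref{feller} shows that $R^{(z,r)}$ is Feller with generator $\mathcal{L}^{(z)}$ and that $x\mapsto x^n\in\mathcal{C}^2([0,1])$, so by Dynkin's formula $M^{(2)}$ is a martingale with $g_2(n,\cdot)=\mathcal{L}^{(z)}(\,\cdot^{\,n})$; the bound $|\mathcal{L}^{(z)}(x^n)|\le K$ obtained in the proof of Proposition~\ref{feller} secures integrability. For $Y^{(1)}$, the chain $N^{(z,n)}$ has rate matrix $Q^{(z)}=(q^z_{ij})$, and applied to the bounded function $g(j)=x^j$ (with $g(\Delta)=x^\Delta=0$) this yields $g_1(n,x)=(Q^{(z)}g)(n)=\sum_{j\ne n,\,\Delta}q^z_{nj}(x^j-x^n)-\alpha_n x^n$; boundedness of $H$ together with a localisation argument promotes $M^{(1)}$ to a true martingale on $[0,T]$, any upward explosion being absorbed into $\Delta$ consistently with $x^\infty=0=x^\Delta$ for $x\in[0,1)$. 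With both martingale problems in place, Proposition~\ref{prop:provingduality} reduces the theorem to the single pointwise generator identity
\[
\mathcal{L}^{(z)}(x^n)=\sum_{j\ne n,\,\Delta}q^z_{nj}\,(x^j-x^n)-\alpha_n x^n,\qquad n\in\mathbb{N}_0,\ x\in[0,1].
\]

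The heart of the proof is to verify this identity by expanding both sides as polynomials in $x$ and matching coefficients. On the left I would substitute $f'(x)=nx^{n-1}$, $f''(x)=n(n-1)x^{n-2}$ into \eqref{inf_gen_cropped} and expand the four jump integrals using $(x(1-u)+u)^n=\sum_{k=0}^{n}\binom{n}{k}(1-u)^k u^{\,n-k}x^k$ and $(x(1-u))^n=(1-u)^n x^n$, multiplying by the respective prefactors $zx$ (for $m^1$), $z(1-x)$ (for $m^2$) and $1$ (for $\nu^1,\nu^2$), and recalling that $T_z(1)=1/(1+z)$, so the small-jump truncation $w\in(0,1)$ in the CBI generator becomes $u\in(0,1/(1+z))$ under $\mathbf{T^{(z)}}$. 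On the right I would expand $g_1$ using the explicit rates \eqref{gendual}. The strategy is to check agreement power by power, organised by the mechanism producing each $x^j$: the type-$1$ families $m^1,\nu^1$, together with the atoms $\tfrac{c^1}{z}\delta_0$ and $\tfrac{\eta^1}{z}\delta_0$ built into $\Lambda^1$ and $\Gamma^1$, should reproduce the coefficients of $x^j$ for $0\le j\le n-1$, i.e.\ the coalescence rates $q^z_{n,j}=\binom{n}{n-j}\overline{\mu}^1_{n,n-j}+\binom{n}{n-j+1}\overline{\lambda}^1_{n,n-j+1}$ and the fixation rate $q^z_{n,0}=\overline{\mu}^1_{n,n}$, with the binomial weights matching (after the index reflection $k\mapsto n-k$) the definitions of $\lambda^l_{i,k},\mu^l_{i,k},\overline{\lambda}^1_{i,k},\overline{\mu}^1_{i,k}$; the type-$2$ immigration should enter only through $(1-u)^n-1$ and be gathered into the killing coefficient $\alpha_n$ via $\rho^2$; and the linear drift, the diffusion and the truncated compensators of $m^1,m^2$ should produce the upward coefficient of $x^{n+1}$.

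The hard part will be the $x^{n+1}$ (selection/branching) coefficient, namely the verification that it equals $-q^z_{n,n+1}=-\bigl(sn+\sum_{k=2}^{n}\kappa_k\binom{n}{k}+\beta_n\bigr)$. This single transition simultaneously absorbs the drift $(b^2-b^1)$ and $\tfrac{2}{z}(c^2-c^1)$ coming from $f'$, the $(c^1-c^2)$ part of the second-derivative term at order $x^{n+1}$, and---most delicately---the truncated compensators $\tfrac{u}{1-u}f'(x)\,1_{(0,1/(1+z))}(u)$ of the finite-variation parts of $m^1$ and $m^2$. Reconciling these forces one to split each integral at the point $1/(1+z)$ and track how the compensator redistributes mass between the $x^n$ and $x^{n+1}$ coefficients, which is precisely the role of the two evaluation points $\lambda^l_{i,k}(1/(1+z))$ and $\lambda^l_{i,k}(1)$ in the definitions of $\kappa_k,\beta_k$ and of the indicator $1_{\{k=2\}}$ carrying the diffusion correction. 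I expect the genuine difficulty to be the careful accounting of the weighted atoms (the $\tfrac{c^1}{z}\delta_0,\tfrac{\eta^1}{z}\delta_0,\tfrac{\eta^2}{z}\delta_0$ and the $u$-weightings in $\Lambda^1,\Gamma^1,\rho^2$) and of the combinatorial shift $x^k\mapsto x^{k+1}$ induced by the prefactor $zx$; I would therefore isolate all untruncated polynomial contributions first, where the matching is direct, and treat the compensator and atom corrections as a separate, self-contained computation, since that is where sign and binomial errors are most likely to creep in.
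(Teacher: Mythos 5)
Your proposal follows essentially the same route as the paper: invoke Proposition \ref{prop:provingduality} with $H(n,x)=x^n$, establish the two martingale problems via the generators $\mathcal{L}^{(z)}$ and $Q^{(z)}$ (handling $n=0$ and $n=\Delta$ separately), and reduce everything to the pointwise identity $\mathcal{L}^{(z)}(x^n)=Q^{(z)}H(n,\cdot)(x)$, verified by expanding $(x(1-u)+u)^n$ and $(x(1-u))^n$, splitting the $m^1,m^2$ integrals at $1/(1+z)$, and matching coefficients of $x^j$ against the rates in \eqref{gendual}. You correctly locate the delicate part in the $x^{n+1}$ coefficient, where the drift, the $(c^1-c^2)$ diffusion asymmetry, and the truncated compensators combine into $s$, $\kappa_k$, and $\beta_n$; the only thing left to do is the explicit algebra, which the paper carries out in its steps (i)--(v).
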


\section{The space of CSBPs is homeomorphic to the spaces of $\Lambda$-coalescents}\label{CBduality}
In this section, we will assume that the processes $X^{(1)}$ and $X^{(2)}$ given in \eqref{CBI_SDE} correspond to equally distributed CB processes with characteristic triplet $(b^{(1)},c^{(1)},m^{(1)})$ (i.e. $\xi^{(i)}=0$ for $i=1,2$). After an application of the culling procedure at the level $z>0$ to the two-dimensional process $(R,Z)$ given by \eqref{tpsp} and \eqref{fp}, we obtain, by Proposition \ref{feller}, that the frequency process $R^{(z,r)}$ given in \eqref{sde_p_cropped} has an infinitesimal generator given for any $f\in\mathcal{C}^2([0,1])$ by
\begin{align*}
	\mathcal{L}^{(z)}f(r)&=c^{(1)}\frac{r(1-r)}{z}f''(r)\notag\\&+z\int_{(0,1)}\Bigg[rf\left(r(1-u)+u\right)+(1-r)f\left(r(1-u)\right)-f\left(r\right)\Bigg]\mathbf{T^{(z)}}(m^{(1)})(du),
\end{align*}
and therefore the process $R^{(z,r)}$ corresponds to the classic $\Lambda$-frequency process, whose dual is the block counting process of a $\Lambda$-coalescent. Indeed, by Theorem \ref{theo_dual} we have that the associated moment dual $N^{(z,r)}$ has a generator $Q^{z}=(q^{z}_{ij})_{i,j\in\mathbb{N}}$ given by
\begin{equation*}
	q_{ij}=
	\begin{cases} 
		\displaystyle \binom{i}{i-j+1}\overline{\lambda}^1_{i,i-j+1}  &\mbox{if } \text{$i\geq 2$ and $j\in\{1,..,i-1\}$,} \\
		0&\mbox{otherwise,} 
	\end{cases}
\end{equation*}
where for $2\leq k\leq i$,
\begin{align*}
	\overline{\lambda}_{i,k}^{(1)}&=\int_{[0,1)}\left[(1-u)^{i-k}u^k\right]u^{-2}\Lambda^{(1)}(du),
\end{align*}
with $\Lambda^{(1)}(du)=\frac{2c^{(1)}}{z}\delta_0(du)+zu^2\mathbf{T^{(z)}}(m^{(1)})(du)$.

Hence, using this procedure, it is natural to map any CB process with the characteristic triplet $(b^{(1)},c^{(1)},m^{(1)})$ to the $\Lambda$-coalescent with associated measure given by $\Lambda^{(1)}$, which can be understood as the genealogy of the CB process $X^{(1)}$. We observe that under this mapping, all of the CB processes with the same diffusion term and jump measure are mapped to the same $\Lambda$-coalescent. Therefore, we will consider the previous mapping from the quotient space obtained by using the equivalence relation in which two CB processes are
related if and only if they have the same diffusion term and the same L\'evy measure to the space of $\Lambda$-coalescents. In this section, we will show that this mapping from the quotient space of CB processes to the genealogy associated with each class is a homeomorphism.

Our strategy in this section is to first show that if the sequence of characteristic triplets associated with a sequence of CB processes converges to the characteristic triplet of some CB process suitably, then the sequence of CB processes converge. We then show that if a sequence of finite measures on $[0,1]$ converges to another such measure, then the sequence of their associated $\Lambda$-coalescents also converges. These two results induce an easy to check equivalent reformulation of our desired result: \textit{the map that sends  characteristic triplets of CB processes to finite measures characterizing $\Lambda$-coalescents, induced by sending each CB process to its genealogy, is a homeomorphism}. This is proven in the final step of the proof.

We will denote by $\mathcal{LM}(\R_+)$ the space of L\'evy measures on $(0,\infty)$; that is, a positive measure $m$ belongs to $\mathcal{LM}(\R_+)$ if and only if it satisfies the condition $\int_{(0,\infty)}(1\wedge x^2)m(dx)<\infty$.

Now, let us consider $\Psi$ the space of CB  processes, we have seen by (\ref{bran_mech}), that each element $Z\in\Psi$ can be characterized in terms of its branching mechanism $\psi$, and therefore by its associated  triplet $(b,c,m)\in\R\times\R_+\times\mathcal{LM}(\R_+)$. 

We now provide a criterion for the convergence of a sequence of CB processes in terms of the convergence of the associated sequence of characteristic triplets. Hence, following pg. 244 in \cite{Ka}, for each triplet $(b,c,m)\in\R\times\R_+\times\mathcal{LM}(\R_+)$ we define
\begin{align*}
	\tilde{b}:&=b+\int_{\R\backslash\{0\}}\left(\frac{x}{x^2+1}-x1_{\{|x|\leq 1\}}\right)m(dx),\\
	\tilde{m}:&=c\delta_0(dx)+\frac{x^2}{x^2+1}m(dx).
\end{align*}
In the space of triplets $\R\times\R_+\times\mathcal{LM}(\R_+)$ we introduce the following metric:
\begin{align*}
	d_{\Psi}((b^{(1)},c^{(1)},m^{(1)}),(b^{(2)},c^{(2)},m^{(2)})):&=|\tilde{b}^{(1)}-\tilde{b}^{(2)}|+\rho(\tilde{m}^{(1)},\tilde{m}^{(2)}),
\end{align*}
where $\rho$ denotes the Prohorov distance in the space of finite measures.

We recall the Skorohod topology on the space of cadlag functions from $\R_+$ to $\R_+$: a sequence $(f_n)_{n\geq1}$ converges to $f$ in the Skorohod topology if there exists a sequence of homeomorphisms $(\lambda_n)_{n\geq 1}$ of $\R_+$ into itself such that
\[
f_n-f\circ\lambda_n\qquad \lambda_n\to \text{Id},\qquad \text{uniformly on compact sets.}
\]
Additionally, we consider the uniform Skorohod topology introduced in \cite{CLU}. Consider a distance $d$ on $[0,\infty]$, which makes it homeomorphic to $[0,1]$. We say that a sequence $(f_n)_{n\geq1}$ converges to $f$ in the uniform Skorohod topology if there exists a sequence of homeomorphisms $(\lambda_n)_{n\geq 1}$ of $\R_+$ into itself such that
\[
d(f_n,f\circ\lambda_n)\to 0\qquad \lambda_n\to \text{Id},\qquad \text{uniformly on $\R_+$.}
\]
We first provide some auxiliary results that will be needed in the proof of our main result.
\begin{proposition}\label{cont2}
	Let $\{Z^n\}$ be a sequence of continuous-state branching processes with the characteristic triplets $(b_n,c_n,m_n)$. Additionally, consider a continuous-state branching process $Z$ with the characteristic triplet $(b,c,m)$. Assume that 
	\[
	\lim_{n\to\infty}d_{\Psi}((b_n,c_n,m_n),(b,c,m))=0.
	\]
	Then $Z^n\rightarrow Z$ as $n\to\infty$, weakly on the space of cadlag paths from $\R_+$ to $[0,\infty]$ with the Skorohod topology if the branching mechanism $\psi$ of $Z$ is nonexplosive, and with the uniform Skorohod topology if $\psi$ is explosive.
\end{proposition}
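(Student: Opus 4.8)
The plan is to transport the convergence of the triplets, through the branching mechanisms, to the cumulant flow $u_t(\lambda)$, read off convergence of every finite-dimensional distribution from the Laplace-transform characterisation, and then combine this with tightness, invoking the uniform Skorohod topology precisely when $\psi$ is explosive. The first step is to show that $d_{\Psi}((b_n,c_n,m_n),(b,c,m))\to 0$ is equivalent to locally uniform convergence $\psi_n\to\psi$ on $[0,\infty)$ of the associated branching mechanisms. The transformed pair $(\tilde b,\tilde m)$ is exactly the device metrising convergence of L\'evy--Khintchine data: $\rho(\tilde m_n,\tilde m)\to 0$ together with $\tilde b_n\to\tilde b$ is equivalent to weak convergence of the corresponding spectrally positive infinitely divisible laws, hence (cf.\ pg.\ 244 of \cite{Ka}) to pointwise convergence of the Laplace exponents $\psi_n$; since each $\psi_n$ is convex, pointwise convergence improves to locally uniform convergence.

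Next I would invoke continuous dependence on parameters for the autonomous equation $\partial_t u^n_t(\lambda)=-\psi_n(u^n_t(\lambda))$, $u^n_0(\lambda)=\lambda$, to deduce $u^n_t(\lambda)\to u_t(\lambda)$ for every $t,\lambda\ge 0$. A Gronwall estimate, fed by the locally uniform convergence of $\psi_n$, delivers this once the solutions are confined to a common compact range, the nonexplosiveness of $\psi$ furnishing the a priori bound that keeps $u_t(\lambda)$ finite and transfers to $u^n$ for large $n$. Convergence of the flow then gives $\E_x[e^{-\lambda Z^n_t}]=e^{-x u^n_t(\lambda)}\to e^{-x u_t(\lambda)}=\E_x[e^{-\lambda Z_t}]$, and, composing the flow through the Markov and branching properties (so that multi-time Laplace transforms become nested evaluations of $u$), this promotes to convergence of all finite-dimensional distributions.

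To upgrade finite-dimensional convergence to functional convergence I would prove tightness from the semimartingale representation \eqref{CBI_SDE} (with $\xi^{(i)}\equiv 0$) by an Aldous--Rebolledo criterion: the drift and quadratic variation of $Z^n$ are dominated uniformly in $n$ by the convergent characteristics $(\tilde b_n,\tilde m_n)$, and the processes remain nonnegative. In the nonexplosive case the limit lives in $D(\R_+,[0,\infty))$ and the ordinary Skorohod topology suffices, which completes the argument.

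The main obstacle is the explosive case, where $Z$ can reach $\infty$ in finite time, the state space must be taken to be $[0,\infty]$, and the ordinary Skorohod topology is inadequate; here I would work in the uniform Skorohod topology of \cite{CLU}. Two points require care. First, the escape of mass to $\infty$ must be controlled uniformly in $n$: this is read off from the behaviour of the flows $u^n_t$ near the origin, since the probability of explosion by time $t$ is encoded in $u^n_t(0+)$, so that convergence of the flows down to $\lambda=0$ captures the limiting mass at $\infty$. Second, one must establish continuity of the Lamperti time-change representation---which realises each $Z^n$ as a time-changed spectrally positive L\'evy process with exponent $\psi_n$---in this nonstandard topology; this continuity, for which I would rely on \cite{CLU}, is the substantive analytic input and is where most of the technical effort is concentrated.
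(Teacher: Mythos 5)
Your route is genuinely different from the paper's. The paper's proof is two steps and entirely structural: triplet convergence in $d_{\Psi}$ gives weak convergence $X^n\Rightarrow X$ of the associated spectrally positive L\'evy processes (Lemma 13.15 of \cite{Ka}), and this is pushed through the Lamperti transform, whose continuity (Corollary 6 of \cite{CPU}) is calibrated exactly to yield Skorohod convergence in the nonexplosive case and uniform Skorohod convergence in the explosive one. You instead argue analytically: triplet convergence $\Rightarrow$ locally uniform convergence of $\psi_n$ $\Rightarrow$ convergence of the cumulant flows $u^n_t(\lambda)$ $\Rightarrow$ finite-dimensional convergence, plus tightness. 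The first half of this is sound in outline (the identification of $d_{\Psi}$-convergence with pointwise, hence by convexity locally uniform, convergence of the $\psi_n$ is correct, and the Gronwall argument works on compacts bounded away from $\lambda=0$).

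There are, however, two genuine problems in the second half. First, the tightness step as stated does not go through: a characteristic triplet only guarantees $\int(1\wedge x^2)\,m_n(dx)<\infty$, so $\int_{[1,\infty)}x\,m_n(dx)$ and the predictable quadratic variation of $Z^n$ may be infinite and $Z^n_t$ need not be integrable; the claim that the drift and quadratic variation are ``dominated uniformly in $n$'' is therefore false in general, and any Aldous--Rebolledo argument must first truncate large jumps and localize, then separately control $\sup_n\mathbb{P}(\sup_{t\le T}Z^n_t>L)$ --- and in the explosive case tightness in $\mathbb{D}(\R_+,[0,\infty))$ is simply unavailable, since the limit leaves every compact in finite time. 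Second, and more structurally: to handle the explosive case you concede that the continuity of the Lamperti transform in the uniform Skorohod topology of \cite{CLU} is the ``substantive analytic input.'' But once that continuity is granted, it combines directly with the L\'evy-process convergence your first step already yields (being equivalent to the $d_{\Psi}$-convergence of the triplets) to give the conclusion in both cases simultaneously --- which is precisely the paper's proof. So as written, the middle of your argument is either incomplete (tightness) or made redundant by its own final ingredient; the repair is to drop the fdd-plus-tightness machinery and let the Lamperti continuity do all the work.
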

\begin{proof}
	For each $n\in\mathbb{N}$ consider a spectrally positive L\'evy process $X^n$ with the characteristic triplet $(b_n,c_n,m_n)$; that is, the Laplace exponent of $X^n$ is given by
	\begin{align*}
		\log \E\left[e^{-\lambda X^n_t}\right]=b_n\lambda+c_n\lambda^2 +\int_{(0,\infty)}(e^{-\lambda z}-1+z\lambda x1_{(0,1)}(z))m_n(dz),\qquad\lambda\geq 0.
	\end{align*}
	Then, by using Lemma 13.15 in \cite{Ka} together with the fact that
	\[\lim_{n\to\infty}d_{\Psi}((b_n,c_n,m_n),(b,c,m))=0,\] 
	we obtain that
	\begin{equation}\label{Levycov}
		X^n\Rightarrow X,\qquad\text{ as $n\to\infty$,}
	\end{equation}
	weakly in the space of cadlag paths form $\R_+$ to $\R_+$ endowed with the Skorohod topology, where $X$ is a L\'evy process with the characteristic triplet $(b,c,m)$.
	
	By the continuity of the Lamperti transform, as in Corollary 6 in \cite{CPU}, together with \eqref{Levycov} we obtain that
	\[
	Z^n\Rightarrow Z,\qquad\text{ as $n\to\infty$,}
	\]
	weakly on the space of cadlag paths from $\R_+$ to $[0,\infty]$ endowed with the Skorohod topology if the branching mechanism $\psi$ of $Z$ is nonexplosive, and with the uniform Skorohod topology if $\psi$ is explosive.
\end{proof}


For our next result, we denote the space of finite measures on $[0,1]$ by $\mathcal{M}_F([0,1])$ and let $(\mathcal{P},d)$ be the space of partitions of the natural numbers endowed with the distance $d$, which is defined for any two partitions $\pi,\pi'\in\mathcal{P}$ by
$$
d(\pi,\pi')= M^{-1}\text{ if and only if } \pi|_{[M]}=\pi'|_{[M]}\text{ and }\pi|_{[M+1]}\neq\pi'|_{[M+1]}
$$
where $\pi|_{[M]}$ is the restriction of $\pi$ to $[M]=\{1,2,...,M\}$. With this, we mean that given $\pi$, then $\pi|_{[M]}$ is the partition of $[M]=\{1,2,...,M\}$ constructed by the rule $i,j\in [M]$ are in the same block on $\pi|_{[M]}$; that is, $i\sim j$ in $\pi|_{[M]}$, if $i\sim j$ in $\pi$. Similarly, $\mathcal{P}|_{[M]}$, for any $M\in\N$, to be the set of partitions of $[M]=\{1,2,..., M\}$ equipped with the same distance $d.$ Note that any element $\pi\in\mathcal{P}$ induces a partition of $[M]$ and denote such partition by $\pi|_{[M]}$.

Consider $A\subset \mathcal{P}$ and $\varepsilon>0$. Denote, by $A^{\varepsilon}$, to the $\varepsilon$-neighborhood of $A$, i.e.
\[
A^{\varepsilon}:=\{\pi\in \mathcal{P}: d(\pi,\pi')<\varepsilon, \text{ for some }\pi'\in A\}.
\]

Observe that its neighbourhoods in $(\mathcal{P},d)$ are characterized by the rule $\pi\in A^{1/M}$ if and only if there exists an element of $A$ whose restriction to $[M]$ agrees with the restriction of $\pi$ to $[M]$. This is,
\begin{align}
	A^{1/M}
	=\{\pi\in \mathcal{P}: \pi|_{[M]}=\pi'|_{[M]},\text{ for some }\pi'\in A\}.\label{Neig}
\end{align} 
Finally, we denote $\mathcal{D}_M=\{\pi\in \mathcal{P}:\{M+1,M+2,...\}\in \pi\}$, the partitions that have a block consisting of all the integers larger than $M$, and note that $\mathcal{D}=\cup_{i=1}^\infty \mathcal{D}_i$ is a countable dense set. This implies that $(\mathcal{P},d)$ it is separable, which in turn implies that the Prohorov metric can be used to study weak convergence of stochastic processes with trajectories in $(\mathcal{P},d)$.
\begin{proposition}\label{cont1}
	Let $\{\Pi^N\}_{N\in\N_0}$ be a sequence of $\Lambda$-coalescents  with characteristic measures $\{\Lambda^N\}_{N\in\N_0}\subset\mathcal{M}_F([0,1])$, such that $\Lambda^N\rightarrow \Lambda^0$ weakly as $N\to\infty$. Let $\Pi^0$ be the $\Lambda$-coalescent associated to $\Lambda^0$. Then 
	$$
	\Pi^N\rightarrow \Pi^0,\qquad \text{as $N\to\infty$,}
	$$ 
	weakly in $\mathbb{D}(\R_+, (\mathcal{P},d))$.
\end{proposition}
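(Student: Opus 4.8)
The plan is to use the consistency of $\Lambda$-coalescents under restriction in order to reduce the statement to a convergence of finite-state Markov chains, and then to lift this back to the partition-valued processes through tightness and convergence of finite-dimensional distributions. First I would recall (see \cite{Pitman}) that for each $M\geq 1$ the restriction $\Pi^N|_{[M]}$ of the $\Lambda^N$-coalescent to $[M]:=\{1,\dots,M\}$ is again a $\Lambda^N$-coalescent, now on $[M]$, and hence a continuous-time Markov chain on the finite set $\mathcal{P}_M$ of partitions of $[M]$. Its only transitions merge $k$ of the current $b\leq M$ blocks, a given such merger occurring at rate $\lambda^N_{b,k}=\int_{[0,1]}y^{k-2}(1-y)^{b-k}\Lambda^N(dy)$ for $2\leq k\leq b$. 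For every admissible pair $(b,k)$ the integrand $y\mapsto y^{k-2}(1-y)^{b-k}$ is bounded and continuous on the compact interval $[0,1]$ (including at the atom $y=0$, where for $k=2$ it encodes the Kingman component), so the assumed weak convergence $\Lambda^N\to\Lambda^0$ yields $\lambda^N_{b,k}\to\lambda^0_{b,k}$ for all $b,k$. Assuming, as is standard for coalescents, a common deterministic initial partition (the case of weakly converging initial laws is identical), the generators of the chains $\Pi^N|_{[M]}$ converge entrywise on the finite space $\mathcal{P}_M$; since these generators are bounded, this gives $\Pi^N|_{[M]}\Rightarrow\Pi^0|_{[M]}$ in $\mathbb{D}([0,\infty),\mathcal{P}_M)$ for every fixed $M$ (for instance via Theorem 17.28 in \cite{Ka}, exactly as in the proof of Theorem \ref{theo_cull}).

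Next I would lift this to the full processes. The crucial structural observation is that $(\mathcal{P},d)$ is compact and carries the projective-limit topology determined by the restriction maps $p_M\colon\pi\mapsto\pi|_{[M]}$: a sequence of partitions converges if and only if each of its restrictions is eventually constant. Consequently a sequence of probability measures on $\mathcal{P}^j$ converges weakly if and only if all of its pushforwards under $p_M^{\otimes j}$ converge. Applying this with the process convergence of the previous step, evaluated at times $0\leq t_1<\dots<t_j$ that are almost surely continuity points of the limiting finite chains, shows that the finite-dimensional distributions of $\Pi^N$ converge to those of $\Pi^0$.

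It then remains to establish tightness of $\{\Pi^N\}$ in $\mathbb{D}([0,\infty),(\mathcal{P},d))$. Compact containment is automatic from the compactness of $(\mathcal{P},d)$, so by Aldous' criterion (see \cite{Ka}) I only need to bound the increments at stopping times. Given $\varepsilon>0$ I would choose $M>1/\varepsilon$; then the event $\{d(\Pi^N_{\tau+\theta},\Pi^N_\tau)>\varepsilon\}$ forces the finite chain $\Pi^N|_{[M]}$ to jump during $(\tau,\tau+\theta]$, and its probability is at most $\theta\,C_M$, where $C_M:=\sup_N\max_{2\leq b\leq M}\sum_{k=2}^{b}\binom{b}{k}\lambda^N_{b,k}$ is the maximal total jump rate. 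This constant is finite because the finitely many rates $\lambda^N_{b,k}$ with $b\leq M$ converge and are therefore uniformly bounded in $N$. Hence the increments vanish uniformly as $\theta\to0$, Aldous' condition holds, and together with the finite-dimensional convergence above this gives $\Pi^N\Rightarrow\Pi^0$ in $\mathbb{D}([0,\infty),(\mathcal{P},d))$.

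The hard part is this lifting step, since Skorohod convergence of all finite restrictions does not transfer automatically to the infinite partition and must be routed through the separate verification of finite-dimensional convergence and tightness. What makes the argument work uniformly in $N$ is precisely the uniform bound on the merger rates, which in turn rests on weak convergence of the $\Lambda^N$ on the compact interval $[0,1]$ and on the continuity of the integrands $y^{k-2}(1-y)^{b-k}$; everything else becomes routine once the reduction to finite state spaces through consistency is in place.
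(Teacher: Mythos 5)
Your proposal is correct, and it opens exactly as the paper does: both arguments reduce to the restricted processes $\Pi^N|_{[M]}$ via consistency, observe that the merger rates are integrals of the bounded continuous functions $u^{k-2}(1-u)^{b-k}$ against $\Lambda^N$, and conclude convergence of these finite-state chains from weak convergence of the measures. The two proofs diverge at the lifting step. You follow the classical route: convergence of finite-dimensional distributions (via the projective-limit structure of $(\mathcal{P},d)$) plus tightness through Aldous' criterion, where the key uniform-in-$N$ bound on the total jump rate of $\Pi^N|_{[M]}$ again comes from convergence of the finitely many rates. The paper instead skips tightness altogether: it applies the Skorohod representation theorem to the convergent restricted chains to get, on a suitable probability space, $\p(\Pi^N_t|_{[M]}=\Pi^0_t|_{[M]}\ \text{for all } t)\to 1$, and then exploits the ultrametric structure of $(\mathcal{P},d)$ --- two partitions agreeing on $[M]$ are within $1/M$ --- to bound the Prohorov distance between the laws of the \emph{full} processes by $1/M$ directly. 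The paper's coupling argument is shorter and makes the rate of convergence in the Prohorov metric explicit, but it leans on the special feature that the metric on path space is controlled by agreement of restrictions for all times simultaneously; your argument is longer but entirely standard and would survive in settings where no such exact coupling of the restricted paths is available. Both are complete proofs of the statement.
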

\begin{proof}
	As before, we denote  $\Pi|_{[M]}$ the sequence of partitions  induced by $\Pi$.  It is known that if $\Pi$ has characteristic measure $\Lambda$, then $\Pi|_{[M]}$ is a Markov chain which at state $\pi$ can jump to the state $\pi'$ if there exist $i\leq k:=|\pi|$ such that $\pi'$ can be constructed by merging $i$ blocks of $\pi$. In this case, it jumps from $\pi$ to $\pi'$ at rate $\tilde{\lambda}_{k,k-i+1}$ where
	$$
	\tilde{\lambda}_{k,i}=\int_0^1u^i(1-u)^{k-i}\frac{\Lambda(du)}{u^2}, \qquad\text{for $2\leq i\leq k$.}
	$$
	Because $u^{i-2}(1-u)^{k-i}$ is a bounded and continuous function for every $1<i\leq k$, and $k>1$, the fact that $\Lambda^N\rightarrow \Lambda^0$ weakly as $N\to\infty$, implies that the transitions of the processes $\{\{\Pi^N_t|_{[M]}, t>0\}\}_{N\in\N}$ converge to the transitions of the process $\{\Pi^0_t|_{[M]}, t>0\}$, for every $M\in\N$.
	Using that $M\in\N$, $\{\Pi_t|_{[M]}, t>0\}$ is a continuous-time Markov chain with a finite state-space, we have that the convergence of their transitions implies that  
	$$\{\Pi^N_t|_{[M]}, t>0\}\rightarrow \{\Pi^0_t|_{[M]}, t>0\}$$ 
	weakly as $N\to\infty$ in the space of cadlag paths from $\R_+$ to $(\mathcal{P}|_{[M]},d)$ with the Skorohod topology. Using that the state space $\mathcal{P}|_{[M]}$ is finite, the convergence of the restricted processes and the Skorohod representation theorem, we see that in some probability space
	$
	\lim_{N\rightarrow \infty}\p(\Pi^N_t|_{[M]}=\Pi^0_t|_{[M]}, \forall t\in[0,T])=1
	$
	for all $T>0.$  Because $\{1,2,...,M\}$ is the only absorbing state and it is reached in finite time, we can strengthen this to 	
	$
	\lim_{N\rightarrow \infty}\p(\Pi^N_t|_{[M]}=\Pi^0_t|_{[M]}, \forall t>0)=1.
	$
	
	For any partition $\pi$ we denote its restriction to $[M]$ by  $\pi|_{[M]}$, defined by the rule that that for any $i,j\in [M]$, $i\sim j$ in $\pi|_{[M]}$ if $i\sim j$ in $\pi$. If a $\mathcal{A}\subset \mathcal{P}$, we define its restriction by  $\mathcal{A}|_{[M]}:=\{\pi\in \mathcal{P}|_{[M]}:\pi=\pi'|_{[M]} \text{for some $\pi'\in \mathcal{A}$} \}$.
	
	Take $N$ such that  $\p(\Pi^N_t|_{[M]}=\Pi^0_t|_{[M]})>1-1/M$. Then, for any measurable set $\mathcal{A}\subset \mathbb{D}(\R_+, (\mathcal{P},d))$  	
	\begin{equation}\label{cool}
		\p(\Pi^N\in \mathcal{A})\leq \p(\Pi^N|_{[M]}\in \mathcal{A}|_{[M]})\leq \p(\Pi|_{[M]}^0\in \mathcal{A}^{1/M}|_{[M]})+1/M=\p(\Pi^0\in \mathcal{A}^{1/M})+1/M.
	\end{equation}
	Where in the first inequality we used the containment of events $\{\Pi^N\in \mathcal{A}\}\subset\{\Pi^N|_{[M]}\in \mathcal{A}|_{[M]}\}$, in the second we used the definition of the Prohorov's distance and in the equality the characterization of the neighbourhoods in  $(\mathcal{P}, d)$ (as discussed just before the statement of this result).
	
	From Equation \eqref{cool}, we conclude that $\rho( \p(\Pi^N \in \cdot), \p(\Pi \in \cdot))<1/M,$ where $\rho$ is the Prohorov metric and $M$ is arbitrary. Thus, the proof is complete. 
\end{proof}
\begin{theorem}\label{homeomorphism}
	Consider the metric space \textbf{L} of the laws of $\Lambda$-coalescents with no atom at $\{1\}$ equipped with the Prohorov distance over the space of probability measures defined on the space $\mathbb{D}(\R_+, (\mathcal{P},d))$. In addition, for $r\in\R$, consider the space  $\mathbf{\Psi}_r\subset\mathbf{\Psi}$ of CB processes with $\tilde b=r$ equipped with the Prohorov distance over the space of probability measures defined on the space $\mathbb{D}([0,T],\R_+)$ endowed with the uniform Skorohod topology. 
	Then, \textbf{L} and $\mathbf{\Psi}_r$  are homeomorphic.

	Furthermore, consider the mapping $\mathbf{H^{(z)}}:\mathbf{\Psi}_r\mapsto \mathbf{L}$ such that a CB  process with the triplet $(b,c,\nu)$ is mapped to the $\Lambda$-coalescent with the associate measure 
	$$
	\mathbf{H^{(z)}}((b,c,\nu))=\frac{c}{z}\delta_0+zy^2\mathbf{T^{(z)}}(\nu).
	$$
	Then, for every $z>0,$  $\mathbf{H^{(z)}}$ is a homeomorphism, with inverse $\mathbf{H^{(z)}}^{-1}$ sending a $\Lambda$-coalescent to the CB process with characteristic triplet
	\begin{equation}\label{image}
		\left(r-\int_{\R\backslash\{0\}}\left(\frac{x}{x^2+1}-x1_{\{|x|\leq 1\}}\right)\mathbf{(T^{(z)})^{-1}(\Lambda)(dx)},z\Lambda(\{0\}), (zy^2)^{-1} \mathbf{(T^{(z)})^{-1}}(\Lambda-\Lambda(\{0\})\delta_0)\right)
	\end{equation}
\end{theorem}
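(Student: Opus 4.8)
The plan is to factor the map $\mathbf{H^{(z)}}$ through the underlying parameter spaces and reduce the statement to a homeomorphism between canonical measures. Write $\Phi\colon(b,c,m)\mapsto Z$ for the map sending a characteristic triplet to the law of its CB process, and $\Xi\colon\Lambda\mapsto\Pi$ for the map sending a finite measure to the law of its $\Lambda$-coalescent; here $m$ is the L\'evy measure (denoted $\nu$ in the statement). On $\mathbf{\Psi}_r$ the constraint $\tilde b=r$ fixes the first coordinate, so the metric $d_\Psi$ collapses to $\rho(\tilde m^1,\tilde m^2)$, the Prohorov distance between the canonical measures $\tilde m=c\,\delta_0+\tfrac{x^2}{1+x^2}m(dx)$; moreover $b$ is recovered from $r$ and $m$ exactly by the first coordinate of \eqref{image}, so $\Phi$ is a bijection of $\mathbf{\Psi}_r$ with the space of such $\tilde m$. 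Since $\mathbf{H^{(z)}}=\Xi\circ\tilde H^{(z)}\circ\Phi^{-1}$, where $\tilde H^{(z)}\colon\tilde m\mapsto\Lambda$ is the induced parameter map, it suffices to prove that each of the three factors is a homeomorphism.

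The core step is to show $\tilde H^{(z)}$ is a homeomorphism for the weak (Prohorov) topologies, and the key observation is the explicit identity
\begin{equation*}
\Lambda=\phi_z\cdot (T_z)_*\tilde m,\qquad \phi_z(y)=\frac{(1-y)^2+z^2y^2}{z},
\end{equation*}
which one verifies separately on the atom at $0$ and on $(0,1)$ using $y=T_z(x)=x/(x+z)$. Here $T_z$ is a homeomorphism of $[0,\infty)$ onto $[0,1)$ (carrying ``mass at $\infty$'' to an atom at $1$, which is precisely why $\mathbf{L}$ excludes the latter), so pushforward by $T_z$ preserves weak convergence in both directions; and $\phi_z$ is continuous and bounded with $\inf_{[0,1]}\phi_z=z/(1+z^2)>0$, so multiplication by $\phi_z$ and by $\phi_z^{-1}$ both preserve weak convergence. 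Composing, $\tilde m^n\to\tilde m$ weakly iff $\Lambda^n\to\Lambda$ weakly; thus $\tilde H^{(z)}$ is a homeomorphism, with inverse built from $(T_z^{-1})_*$ and division by $\phi_z$, matching the second and third coordinates of \eqref{image}.

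It then remains to upgrade Propositions \ref{cont2} and \ref{cont1} to homeomorphisms, i.e.\ to establish the converse continuities. For $\Phi$: Proposition \ref{cont2} gives continuity $(\text{triplet})\to(\text{law})$, and the converse follows because the cited L\'evy convergence criterion (Lemma 13.15 in \cite{Ka}) is an equivalence and the Lamperti transform and its inverse are both continuous, so $Z^n\to Z$ weakly forces $\tilde b_n\to\tilde b$ and $\tilde m_n\to\tilde m$ weakly, that is $d_\Psi\to0$. For $\Xi$: Proposition \ref{cont1} gives one direction, and for the converse I would use the moment method. The transition rates of a $\Lambda$-coalescent are $\lambda_{k,i}=\int_{[0,1)}u^{i-2}(1-u)^{k-i}\Lambda(du)$ for $2\le i\le k$; weak convergence $\Pi^n\to\Pi^0$ forces convergence of all these rates, hence of $\int u^a(1-u)^b\,\Lambda^n(du)$ for every $a,b\ge0$. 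Since $\lambda_{2,2}=\Lambda([0,1))$ is the total mass the masses stay bounded, and since $\{u^a(1-u)^b\}$ generates a subalgebra of $\mathcal{C}([0,1])$ that is dense by Stone--Weierstrass, this yields $\Lambda^n\to\Lambda$ weakly among measures with no atom at $1$, so $\Xi^{-1}$ is continuous.

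Putting the three homeomorphisms together shows that $\mathbf{H^{(z)}}$ and its inverse are continuous, proving $\mathbf{\Psi}_r\cong\mathbf{L}$ with the stated map and inverse \eqref{image}. I expect the main obstacle to be the converse direction for the coalescents: extracting genuine weak convergence of $\Lambda^n$ from convergence of the processes requires both the identification of the jump rates with the moments of $\Lambda$ and care that no mass is lost to the excluded atom at $1$ (equivalently, that no mass of $\tilde m_n$ escapes to $\infty$), which is exactly where the restrictions defining $\mathbf{L}$ and $\mathbf{\Psi}_r$ enter.
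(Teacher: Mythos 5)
Your proposal is correct and its skeleton matches the paper's: reduce the statement to Propositions \ref{cont2} and \ref{cont1} plus a bijectivity check at the level of parameters. The difference is that the paper's own proof is terse to the point of leaving gaps that you explicitly fill. The paper simply declares that ``$H^{(z)}$ and its inverse are continuous, which is the content of Propositions \ref{cont2} and \ref{cont1}'' and then verifies injectivity (via $c_1=c_2$ and the change of variables $w=yz/(1-y)$) and surjectivity (by checking that the triplet \eqref{image} maps to $\Lambda$); but those propositions literally give only the directions parameters~$\Rightarrow$~processes, whereas continuity of $\mathbf{H^{(z)}}$ between the \emph{process-level} Prohorov metrics also requires the converses (processes~$\Rightarrow$~parameters) together with bicontinuity of the induced parameter map $\tilde m\mapsto\Lambda$. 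You supply exactly these missing pieces: the identity $\Lambda=\phi_z\cdot(T_z)_*\tilde m$ with $\phi_z(y)=((1-y)^2+z^2y^2)/z$ (which I checked: at $y=T_z(x)$ one has $zy^2(x^2+1)/x^2=\phi_z(y)$, and $\inf\phi_z=z/(1+z^2)>0$), the observation that Kallenberg's Lemma 13.15 is an equivalence and the Lamperti transform is bicontinuous, and a moment/Stone--Weierstrass argument recovering $\Lambda^n\to\Lambda$ from convergence of the coalescent rates. Your factorization also subsumes the paper's injectivity/surjectivity computation, since each factor is exhibited as a bijection with an explicit inverse matching \eqref{image}. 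In short: same route, but your version is the more complete one; the only step still needing a fully careful write-up is the one you flag yourself, namely that weak convergence of $\Pi^n$ in $\mathbb{D}(\R_+,(\mathcal{P},d))$ forces convergence of the finite-state restricted generators, and that no mass of $\Lambda^n$ is lost to the excluded atom at $1$.
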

\begin{proof}
	First we show that the mapping $\mathbf{H^{(z)}}$ is one-to-one and onto. 
	
	\textit{Onto)} Chose an arbitrary finite measure $\Lambda$ and note that the branching process with triplet specified in \eqref{image} is mapped under $\mathbf{H^{(z)}}$ to the coalescent with characteristic measure $\Lambda$.
	
	\textit{One-to-one)} Assume that $\mathbf{H^{(z)}}((b_1,c_1,\nu_1))=\mathbf{H^{(z)}}((b_2,c_2,\nu_2))$, then
	\[
	\frac{c_1}{z}=\mathbf{H^{(z)}}((b_1,c_1,\nu_1))(\{0\})=\mathbf{H^{(z)}}((b_2,c_2,\nu_2))(\{0\})=\frac{c_2}{z},
	\]
	which implies that $c_1=c_2$. Now consider the measurable function $f:(0,\infty)\to\R$ and write $w=\frac{yz}{1-y}$. We observe that
	\begin{align*}
		\int_{(0,\infty)}f(w)\nu_1(dw)&=\int_{(0,1)}f\left(\frac{yz}{1-y}\right)(zy^{2})^{-1}zy^2\mathbf{T^{(z)}}(\nu_1)(dy)\\&=\int_{(0,1)}f\left(\frac{yz}{1-y}\right)(zy^{2})^{-1}zy^2\mathbf{T^{(z)}}(\nu_2)(dy)=\int_{(0,\infty)}f(w)\nu_2(dw).
	\end{align*}
	So we conclude that $\nu_1=\nu_2$.
	
	To proceed with the proof, we introduce the following notation
	\begin{align*}
		(b_{\Lambda},c_{\Lambda},\nu_{\Lambda}):&=	\Bigg(r-\int_{\R\backslash\{0\}}\left(\frac{x}{x^2+1}-x1_{\{|x|\leq 1\}}\right)\mathbf{(T^{(z)})^{-1}}(\Lambda)(dx),\notag\\&\hspace{5cm}z\Lambda(\{0\}), (zy^2)^{-1} \mathbf{(T^{(z)})^{-1}}(\Lambda-\Lambda(\{0\})\delta_0)\Bigg).
	\end{align*}
	Then, by noticing that
	\[
	\tilde{b}_{\Lambda}:=b_{\lambda}+\int_{\R\backslash\{0\}}\left(\frac{x}{x^2+1}-x1_{\{|x|\leq 1\}}\right)\mathbf{(T^{(z)})^{-1}}(\Lambda)(dx)=r,
	\]
	we obtain that the CB process with characteristic triplet $(b_{\Lambda},c_{\Lambda},\nu_{\Lambda})$ belongs to $\mathbf{\Psi}_r$. On the other hand, by the definition of the mapping $\mathbf{H^{(z)}}$ we obtain that
	\[
	\mathbf{H^{(z)}}\left((b_{\Lambda},c_{\Lambda},\nu_{\Lambda})\right):=\Lambda.
	\]
	Hence, $\mathbf{H^{(z)}}^{-1}(\Lambda)=(b_{\Lambda},c_{\Lambda},\nu_{\Lambda})$.
	
	Finally, because $\mathbf{H^{(z)}}$ and its inverse are continuous, which is the content of Propositions \ref{cont2} and \ref{cont1}, the proof is complete.
\end{proof}

\section{The asymmetric Eldon-Wakely coalescent: A minimalistic example}\label{Ex}

To illustrate our results, we study a simple example heuristically. Fix parameters $z,v_1,v_2>0,$ and $y_1,y_2\in (0,1),$ the \textit{simple $\Lambda$-asymmetric frequency process} is the solution to the SDE
\begin{align}\label{SGlambda}
	dR^{(z,r)}_t=
	&\int_{(0,\infty)} y_1(1-R^{(z,r)}_{t-})1_{\{v<zR^{(z,r)}_{t-}\}} N_1(dt,dv)\notag\\&\hspace{3cm}-\int_{(0,\infty)} y_2R^{(z,r)}_{t-}1_{\{v<z(1-R^{(z,r)}_{t-})\}} N_2(dt,dv),\qquad t>0,\notag\\
	R^{(z,r)}_0=&r\in[0,1].
\end{align}
where for $i=1,2$, $N_i(dt,dv)$ are independent Poisson random measures on the space $[0,\infty)\times(0,1)$ with intensity measures $zv_ids\times dv$ for $i=1,2. $

Let $X^{(1)}$ and  $X^{(2)}$ be two CB processes, such that the only transitions of  $X^{(i)}$  are jumps of size $w_i>0$ that occur at rate $x v_i$ when the process is at the state $x$ for $i=1,2$. More formally, for each $i=1,2$, let $N^{(i)}=\{N_t^{(i)}:t\geq 0\}$ be a Poisson process with intensity parameters $v_i>0$, and define $Y_t^{(i)}=w_i N^{(i)}_t$ for $t\geq0$. Then, we define the CB process $X^{(i)}$ by means of the Lamperti transform; that is,
\[
X_t^{(i)}=Y^{(i)}_{\int_0^t X^{(i)}_s ds}, \qquad \text{$t\geq0$, $i=,1,2$}. 
\]
\begin{figure}[h]
	\begin{center}
		\includegraphics[height=.3\textwidth]{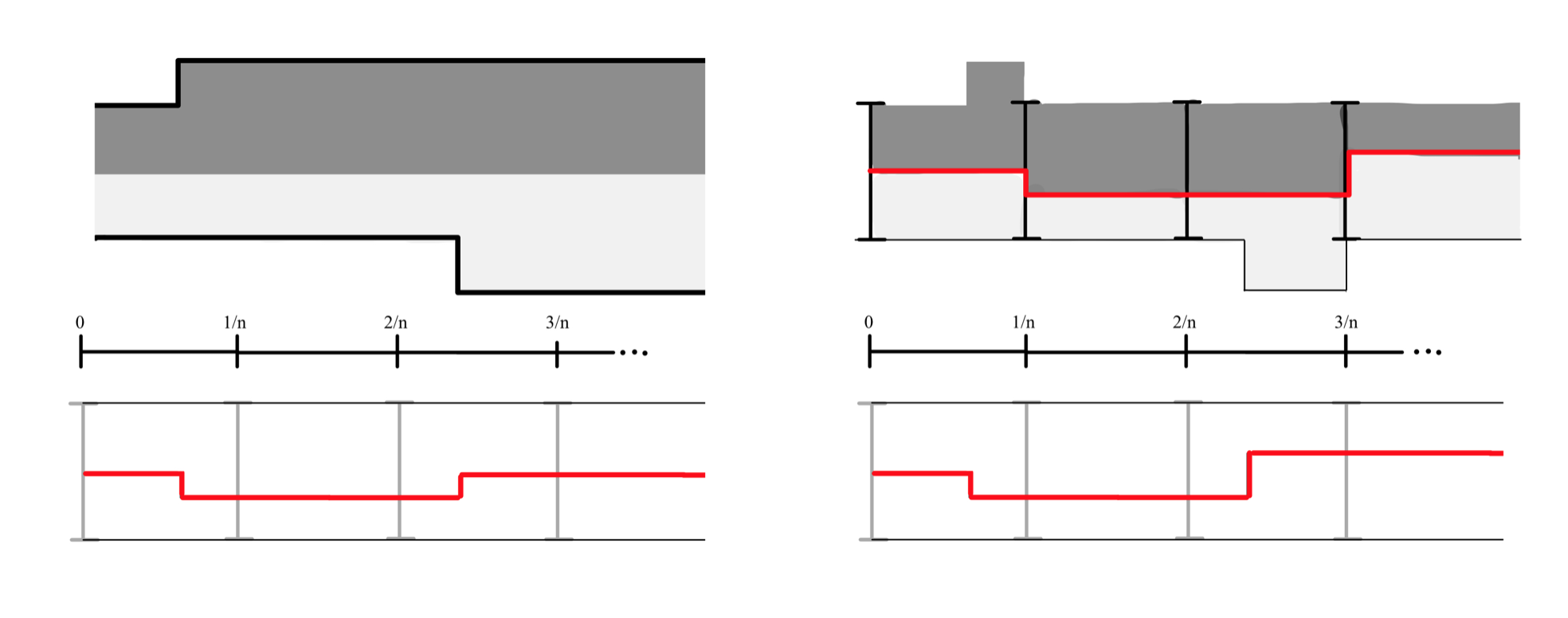}
		\caption{\footnotesize{ A realization of the $\Lambda$-asymmetric Eldon-Wakely frequency process, starting from two simple CB processes. In the upper left corner the process $X^{(1)}$ (dark grey) and the process $X^{(2)}$ (light gray) are depicted. $X^{(1)}$  performs jumps of size $w_1$ at rate $v_1$ (the first jump in the picture), while $X^{(2)}$  performs jumps of size $w_2$ at rate $v_2$ (the second jump). The total mass process $Z$ is the sum of the two CB processes. In the lower left corner we draw the frequency process. Note that at the first jump, the frequency process makes a jump of size $y_1=T_z(w_1)=w_1/(z+w_1)$, where $z=X^{(1)}_0+X^{(2)}_0$}. However, at the second jump the total mass is no longer $z$ and the jump of the frequency process is not $T_z(w_2)$. This is an indication that the frequency process is not a Markov process. In the right-side of the figure, we observe how the Gillespie's culling procedure allows us to overcome this difficulty. At each sampling point, the total mass is returned to $z$, while the frequency is unchanged. Sampling points occur so often that with probability tending to one no more than one jump occurs between subsequent sampling times. Thus, the jumps of the CB process are always pushed forward to jumps of the frequency process by means of the function $T_z$ and the frequency process is Markovian at the sampling times.}
	\end{center}
\end{figure}

If we take $z=x_0^{(1)}+x_0^{(2)}$ and $r=x_0^{(1)}/z$, then we will show that the associated $\Lambda$-asymmetric frequency process $R^{(z,r)}$ is the limit of the culling procedure at level $z$ introduced in Section \ref{NC}. To this end, let $y_i=w_i/(z+w_i)$ and note that, at the position $(z,r)$, $X^{(1)}$ jumps at rate $rz v_1$. At each jump of $X^{(1)}$ the associated frequency process $R$ as defined in \eqref{fp}, will jump to the level 
$$(x_1+w_1)/(z+w_1)=\frac{x_1}{z}(1-\frac{w_1}{z+w_1})+\frac{w_1}{z+w_1}=r(1-y_1)+y_1=r+y_1(1-r).$$
Meanwhile, at level $(z,r)$, $X^{(2)}$ jumps at rate $(1-r)zv_2$ and $R$ jumps to the state $r-y_2r$. From the previous computations and applying the culling procedure in Section \ref{NC}, it is not difficult to show that for $T>0$
$$
\overline{R}^{(n,z)}\rightarrow R^{(z,r)},\qquad \text{as $n\to\infty$ weakly in $\mathbb{D}([0,T],[0,1])$  .}
$$
where $R^{(z,r)}$ is the $\Lambda$-asymmetric frequency process in \eqref{SGlambda} with parameters $z,v_1,v_2,y_1,y_2>0$, and  $\overline{R}^{(n,z)}$ is the jump Markov process with generator \eqref{dis_gen} obtained by the culling procedure introduced in Section \ref{NC}. This is a particular example of Theorem \ref{theo_cull}.

We are now interested in finding the moment dual of $R^{(z,r)}$; that is, we will construct the process $N^{(z,n)}$ such that for every $t>0, r\in[0,1]$ and $n\in\N$
$$
\E[r^{N^{(z,n)}_t}]=\E[(R^{(z,r)}_t)^n].
$$

Let $A_n(v_1,v_2,y_1,y_2):= v_2(1-(1-y_2)^n)-v_1(1-(1-y_1)^n)$ and assume $A_n(v_1,v_2,y_1,y_2)>0$. We will call $A_n(v_1,v_2,y_1,y_2)$ the difference between the total activities, for reasons that will become clear later on. Our model will reveal that $A_n(v_1,v_2,y_1,y_2)$ is, in some sense, the term under evolutionary selection.

The block counting process of the simple $\Lambda$-asymmetric coalescent with parameters $z,v_1,v_2,y_1,y_2$, is the asymmetric version of the Eldon-Wakely-Coalescent \cite{EW}, which is the coalescent arising from reproduction events with constant size. This is the $\N$ valued process $N^{(z)}=\{N^{(z)}_t: t\geq0\}$ with generator
\begin{equation*}
	q_{ij}=
	\begin{cases} 
		\displaystyle z v_1 \binom{i}{i-j+1}(1-y_1)^{j-1}y_1^{i-j+1} &\mbox{if } \text{$i\geq 2$ and $j\in\{1,..,i-1\}$,} \\
		\displaystyle z A_j(v_1,v_2,y_1,y_2)&\mbox{if } \text{$i\in \N$ and $j=i+1$,} \\
		0&\mbox{otherwise.} 
	\end{cases}
\end{equation*}
It is not difficult to see that the previous transitions correspond to those given in \eqref{gendual} with $\alpha_i=0$ for $i\geq 1$, $\overline{\mu}^1_{i,k}=0$ for $1\leq k\leq i$, and 
\begin{itemize}
	\item For $2\leq k\leq i$, 
	\[
	\overline{\lambda}_{i,k}^{(1)}=zv_1(1-y_1)^{i-k}y_1^k.
	\]
	\item $s=v_2y_21_{(0,1/1+z)}(y_2)-v_1y_11_{(0,1/1+z)}(y_1)$.
	\item For $k\geq 2$
	\begin{align*}
		\kappa_k&=z\Bigg[v_1(1-y_1)^{i-k}y_1^k(k1_{(0,1/1+z)}(y_1)-1)-v_2(1-y_2)^{i-k}y_2^k(k1_{(0,1/1+z)}(y_2)-1)\Bigg].
	\end{align*}
	\item For $k\geq 1$
	\begin{align*}
		\beta_k=-kz\left[v_1(1-y_1)^{k-1}y_1(1-1_{(0,1/1+z)}(y_1))-v_2(1-y_2)^{k-1}y_2(1-1_{(0,1/1+z)}(y_2))\right].
	\end{align*}
\end{itemize}
Note that in the first line we have the transitions of a $\Lambda$-coalescent with $\Lambda=v_1\delta_{y_1}$. Interestingly, only in the second line do we see the parameters $v_2$ and $y_2$, which are causing branching events that account for the asymmetry between the upper and lower jumps. If $v_1=v_2$ and $y_1=y_2$, then the second line is zero and we are left with the Eldon Wakely coalescent with $\Lambda=v_1\delta_{y_1}$. 

It is also surprising that the branching coefficient is in terms of $A_n(v_1,v_2,y_1,y_2)$ and 
that $ v_1(1-(1-y_1)^n)$ is the rate at which one observes an event of any type in an Eldon Wakely coalescent with $\Lambda=v_1\delta_{y_1}$. The fact that branching is related to selection allows us to state, in the spirit of Gillespie, that reproduction mechanisms 
are more likely to go to fixation if they have a larger total activity. 

It is possible to use standard techniques to show that $N^{(z,n)}$ is the moment dual of $R^{(z,r)}$. The generator of $R^{(z,r)}$, applied to any $f\in\mathcal{C}^2([0,1])$ is given by
\begin{eqnarray*}
	\mathcal{L}^{(z)}f(r)=v_1zr[f(r+y_1(1-r))-f(r)]+v_2z(1-r)[f(r-y_2r)-f(r)].
\end{eqnarray*}
By choosing $f_n(x)=x^n$ as a test function, we observe that 
\begin{eqnarray}\label{dual_exam}
	\mathcal{L}^{(z)}f_n(r)&=&zv_1r[(r+y_1(1-r))^n-r^n]+zv_2(1-r)[(r-y_2r)^n-r^n]\nonumber\\
	&=&zv_1r(r+y_1(1-r))^n-v_1r^n+(1-r)r^n[v_1-v_2]+zv_2(1-r)(r-y_2r)^n\nonumber\\
	&=&zv_1\sum_{k=2}^n\binom{n}{k}(1-y_1)^{n-k}y_1^k[r^{n-k+1}-r^n]+v_1(1-y_1)^n[r^{n+1}-r^n]\nonumber\\ 
	&&+zv_2(1-r)r^n(1-y_2)^n+(v_2-v_1)[r^{n+1}-r^n]\nonumber\\
	&=&zv_1\sum_{k=2}^n\binom{n}{k}(1-y_1)^{n-k}y_1^k[r^{n-k+1}-r^n]\nonumber\\  
	&&+[zv_2(1-(1-y_2)^n)-zv_1(1-(1-y_1)^n)][r^{n+1}-r^n]=\mathcal{Q}^{(z)}f_r(n),
\end{eqnarray}
where $\mathcal{Q}^{(z)}$ is the generator of $N^{(z,n)}$ and $f_x(n)=x^n$. This is a special case of Theorem \ref{theo_dual}. 

If we take $v=v_1=v_2$ and $w=w_1=w_2=yz/(1-y)$ , we  note that this implies $y=w/(z+w)$. This confirms the fact that the culling procedure at level $z$ and the duality relationship maps the CB process with the characteristic triplet $(0,0, v\delta_w)$ (as an element of $\mathbf{\Psi}_{\tilde{w}}$ where $\tilde{w}=w/(w^2+1)-w1_{\{|w|\leq 1\}}$) to the $\Lambda$-coalescent with $\Lambda=v\delta_y$. This confirms the result in Theorem \ref{homeomorphism}, where we additionally showed that this is a homeomorphism of metric spaces.

However, this is not true if we do not restrict ourselves to equally distributed CB processes. Indeed, let us consider $s>0$ and the following CB processes in $\mathbf{\Psi}_0$ given by
\begin{align*}
	X^{(i,\varepsilon)}_t:=Y^{(i,\varepsilon)}_{\int_0^t X_s^{(i,\varepsilon)}ds}, \qquad \text{$t\geq0$, $i=1,2$,}
\end{align*}
where $Y^{(1,\varepsilon)}$ and $Y^{(2,\varepsilon)}$ are Poisson process with generating triplets $(b^{(1)},0,z\varepsilon^{-2}\delta_{\varepsilon})$ and $(b^{(2)},0,(z\varepsilon^{-2}+(s\varepsilon)^{-1})\delta_{\varepsilon})$, respectively.
Where
\begin{align*}
	b^{(1)}&:=-z\varepsilon^{-2}\int_{(0,\infty)}\left(\frac{x}{x^2+1}-x1_{\{x\leq 1\}}\right)\delta_{\varepsilon}(dx)=\frac{z}{\varepsilon^2}\frac{\varepsilon^3}{\varepsilon^2+1},\\ 
	b^{(2)}&:=-\frac{sz-\varepsilon}{s\varepsilon^2}\int_{(0,\infty)}\left(\frac{x}{x^2+1}-x1_{\{x\leq 1\}}\right)\delta_{\varepsilon}(dx)=\frac{sz-\varepsilon}{s\varepsilon^2}\frac{\varepsilon^3}{\varepsilon^2+1}. 
\end{align*}
Now, because
\begin{align*}
	\tilde{m}^{(1)}&:=z\varepsilon^{-2}\frac{x^2}{x^2+1}\delta_{\varepsilon}(dx)\to z\delta_0,\\
	\tilde{m}^{(2)}&:=\frac{sz-\varepsilon}{s\varepsilon^2}\frac{x^2}{x^2+1}\delta_{\varepsilon}(dx)\to z\delta_0,\qquad \text{weakly as $\varepsilon\to 0$.}
\end{align*}
we have by Proposition \ref{cont2} that for $i=1,2$, $X^{(i,\varepsilon)}\to X^{(i,0)}$ as $\varepsilon\to0$ weakly in $\mathbb{D}(\R_+,\R_+)$ where $X^{(i,0)}$ is the solution to
\begin{align*}
	X^{(i,0)}_t=x^{(i)}+\int_0^t\sqrt{2zX^{(i,0)}_s}dB_s^{(i)},\qquad t\geq0,
\end{align*}
and $B^{(i)}=\{B^{(i)}_t:t\geq0\}$ are independent Brownian motions. By Theorem \ref{theo_dual}, the dual process of the associated $\Lambda$-asymmetric frequency process has generator $\mathcal{Q}^{(z,0)}$, which satisfies
\begin{align*}
	\mathcal{Q}^{(z,0)}f_r(n)=\binom{n}{2}[r^{n-1}-r^n].
\end{align*}
Meanwhile, let us denote by $R^{(z,r,\varepsilon)}$ the $\Lambda$-asymmetric frequency process associated to the couple of CB processes $(X^{(1,\varepsilon)},X^{(2,\varepsilon)})$. Then, by \eqref{dual_exam} we have that the generator $\mathcal{Q}^{(z,\varepsilon)}$ of the dual process of $R^{(z,r,\varepsilon)}$ satisfies
\begin{align*}
	\mathcal{Q}^{(z,\varepsilon)}f_r(n)&=\frac{z^2}{\varepsilon^2}\sum_{k=2}^n\binom{n}{k}\left(\frac{z}{z+\varepsilon}\right)^{n-k}\left(\frac{\varepsilon}{z+\varepsilon}\right)^k[r^{n-k+1}-r^n]\nonumber\\  
	&+\left[z(z\varepsilon^{-2}+(s\varepsilon)^{-1})\left(1-\left(\frac{z}{z+\varepsilon}\right)^n\right)-\frac{z^2}{\varepsilon^2}\left(1-\left(\frac{z}{z+\varepsilon}\right)^n\right)\right][r^{n+1}-r^n]
\end{align*}
Therefore,
\begin{align*}
	\lim_{\varepsilon\to 0}\mathcal{Q}^{(z,\varepsilon)}f_r(n)=\binom{n}{2}[r^{n-1}-r^n]+\frac{n}{s}[r^{n+1}-r^n].
\end{align*}
The fact that for $r\in[0,1]$, and $n\geq 1$, $\lim_{\varepsilon\to 0}\mathcal{Q}^{(z,\varepsilon)}f_r(n)\not=\mathcal{Q}^{(z,0)}f_r(n)$ implies that the mapping that sends a couple of CB process in $\mathbf{\Psi}_0$ with different distributions to their associated $\Lambda$-asymmetric frequency process by the culling procedure and then by the duality relationship to the space of $\Lambda$-coalescent processes is in general not continuous. 

\section{Biological remarks}\label{biol_remarks}
By studying the relative frequency between two general CB processes, we have characterized the evolutionary forces that emerge from the differences in the reproduction mechanisms of two competing species, as follows:

\begin{enumerate}
	\item Selection: Classic selection is visible in the term $s$ of the generator of the dual process, given by \eqref{gendual}, in the form of branching. We distinguish three sources of selection coming from the difference of the reproduction mechanisms. The first is unsurprising: the difference between the drift terms $b^{(2)}-b^{(1)}$. The second is related to the difference between the diffusion terms and was first observed by Gillespie (see \cite{Gill73, Gill74}), having the form $2z^{-1}(c^{(2)}-c^{(1)})$. The third is new in the literature and comes from the difference of the terms associated with the compensation of the jump measures of the CB processes. 
	\item Frequency-dependent selection: The term $\beta_i$ in \eqref{gendual} corresponds to frequency-dependent selection, and to our knowledge is new in the literature. This frequency-dependent selection term is related to the jump measures of the competing CB processes.
	\item Frequency-dependent variance: As observed by Gillespie in \cite{Gill73, Gill74}, the difference between the diffusion terms $c^{(1)}$ and $c^{(2)}$ modifies the variance. To be precise Gillespie introduced the  Gillespie-Wright-Fisher diffusion, which he obtained as the relative frequency between two Feller processes and solves the following SDE:
	\begin{equation*}
		dX_t=X_t(1-X_t)\left[b^{(2)}-b^{(1)}+\frac{2}{z}(c^{(2)}-c^{(1)})\right]dt+\sqrt{\frac{2}{z}X_t(1-X_t)[c^{(2)}X_t+c^{(1)}(1-X_t)]}dB_t,
	\end{equation*}
	for $t>0$.
	
	In \cite{GMP}, a similar population dependent variance was obtained in the context of populations that require  different amount of resources to reproduce (efficiency). In \cite{GPP}, it was shown that the \textit{efficiency} term has a dual term, which is pairwise branching. We further generalize this by observing that there is an additional frequency dependent term modifying the variance associated with the jumps measures of the CB processes. These are the terms $\kappa_k$ in the transitions of the dual process in \eqref{gendual}.
	\item Coalescence: The terms $\overline{\lambda}^{(1)}_{i,i-j+1}$ in the generator of the dual process given in \eqref{gendual} are associated with coalescence. A novel characterisation of the $\Lambda$-coalescent arises naturally from this work: those that can be obtained as functionals of CB processes in the sense of being dual to an asymmetric frequency process. 
	\item Mutation: If one allows immigration, then one can obtain mutation. Mutation can be found in the terms $\alpha_i$ and $\overline{\mu}^{(1)}_{i,i-j}$ in \eqref{gendual}. If the immigration is discontinuous, modelling big immigration events, then one obtains coordinated mutation in the sense of \cite{GKT}. 
\end{enumerate}

A natural direction for future research is to study and understand the mechanisms behind the appearance of each of the new terms (e.g. the selection term arising from the jump measures or the frequency-dependent selection term) that were obtained due to the asymmetry in the dynamics of the CBI processes from which the $\Lambda$-asymmetric frequency process is constructed. 

\section*{Acknowledgements}
%
%
We want to thank the anonymous referees for the careful reading, constructive comments and suggestions, which
significantly improved the presentation and the readability of the paper. 

The second author was supported by the grant CONACYT CIENCIA B\'ASICA A1-S-14615.

\begin{appendix}
	\section{Proof of proposition \ref{infinitesimal_generator}}\label{App_1}
	Because $X^{(1)}$ and $X^{(2)}$ are semi-martingales, and $f$ is sufficiently smooth on $[0,1]\times[0, \infty)$, 
	we can use the change of variables/Meyer-It\^o's formula (cf.\ Theorems II.31 and II.32 of \cite{protter}) to deduce that 
	\begin{align}\label{gen_fs}
		&f\Bigg(\frac{X^{(1)}_{t\wedge\tau }}{X^{(1)}_{t\wedge\tau }+X^{(2)}_{t\wedge\tau }},X^{(1)}_{t\wedge\tau }+X^{(2)}_{t\wedge \tau}\Bigg)=f\left(\frac{x^{(1)}}{x^{(1)}+x^{(2)}},x^{(1)}+x^{(2)}\right)+M_{t\wedge\tau }\notag\\&+\int_0^{t\wedge\tau }\left[A^{(1)}(X^{(1)}_s,X^{(2)}_s)+A^{(2)}(X^{(1)}_s,X^{(2)}_s)+A^{(3)}(X^{(1)}_s,X^{(2)}_s)+A^{(4)}(X^{(1)}_s,X^{(2)}_s)\right]ds,
	\end{align}
	where 
	\begin{align*}
		&A^{(1)}(x,y):=-b^{(1)}x\partial_1f\left(\frac{x}{x+y},x+y\right)\frac{y}{(x+y)^2} -b^{(1)}x\partial_2f\left(\frac{x}{x+y},x+y\right)\notag\\&+c^{(1)}x\partial_{12}f\left(\frac{x}{x+y},x+y\right)\frac{y}{(x+y)^2}\notag\\
		&+c^{(1)}x\left(\partial_{11}f\left(\frac{x}{x+y},x+y\right)\frac{y^2}{(x+y)^4}-\partial_1f\left(\frac{x}{x+y},x+y\right)\frac{2y}{(x+y)^3}\right)\notag\\
		&+c^{(1)}x\left(\partial_{21}f\left(\frac{x}{x+y},x+y\right)\frac{y}{(x+y)^2}+\partial_{22}f\left(\frac{x}{x+y},x+y\right)\right),\notag
		\end{align*}
		\begin{align*}
		&A^{(2)}(x,y):=x\int_{(0,\infty)}\Bigg[f\left(\frac{x+u}{x+u+y},x+u+y\right)-f\left(\frac{x}{x+y},x+y\right)\notag\\
		&-u1_{(0,1)}(u)\left(\partial_1f\left(\frac{x}{x+y},x+y\right)\frac{y}{(x+y)^2}+\partial_2f\left(\frac{x}{x+y},x+y\right)\right)\Bigg]m^{(1)}(du)\notag\\
		&+\eta^{(1)}\partial_1f\left(\frac{x}{x+y},x+y\right)\frac{y}{(x+y)^2}+\eta^{(1)}\partial_2f\left(\frac{x}{x+y},x+y\right)\notag\\&+\int_{(0,\infty)}\left[f\left(\frac{x+u}{x+u+y},x+u+y\right)-f\left(\frac{x}{x+y},x+y\right)\right]\nu^{(1)}(du).
	\end{align*}
	Additionally,
	\begin{align*}
		&A^{(3)}(x,y)=b^{(2)}y\partial_1f\left(\frac{x}{x+y},x+y\right)\frac{x}{(x+y)^2}-b^{(2)}y\partial_2f\left(\frac{x}{x+y},x+y\right)\notag\\&-c^{(2)}y\partial_{12}f\left(\frac{x}{x+y},x+y\right)\frac{x}{(x+y)^2}\notag\\
		&+c^{(2)}y\left(\partial_{11}f\left(\frac{x}{x+y},x+y\right)\frac{x^2}{(x+y)^4}+\partial_1f\left(\frac{x}{x+y},x+y\right)\frac{2x}{(x+y)^3}\right)\notag\\
		&+c^{(2)}y\left(-\partial_{21}f\left(\frac{x}{x+y},x+y\right)\frac{x}{(x+y)^2}+\partial_{22}f\left(\frac{x}{x+y},x+y\right)\right),\notag\\
		&A^{(4)}(x,y)=y\int_{(0,\infty)}\Bigg[f\left(\frac{x}{x+u+y},x+u+y\right)-f\left(\frac{x}{x+y},x+y\right)\notag\\
		&-u1_{(0,1)}(u)\left(-\partial_1f\left(\frac{x}{x+y},x+y\right)\frac{x}{(x+y)^2}+\partial_2f\left(\frac{x}{x+y},x+y\right)\right)\Bigg]m^{(2)}(du)\notag\\
		&-\eta^{(2)}\partial_1f\left(\frac{x}{x+y},x+y\right)\frac{x}{(x+y)^2}+\eta^{(2)}\partial_2f\left(\frac{x}{x+y},x+y\right)\notag\\
		&+\int_{(0,\infty)}\left[f\left(\frac{x}{x+u+y},x+u+y\right)-f\left(\frac{x}{x+y},x+y\right)\right]\nu^{(2)}(du).
	\end{align*}
	and $M=\{M_t:t\geq0\}$ is a local martingale.
	
	Using \eqref{tpsp} together with \eqref{fp}, we can write \eqref{gen_fs} in terms of the process $(R,Z)$, as follows
	\begin{align}\label{gen_fs_2}
		f\left(R_{t\wedge\tau },Z_{t\wedge\tau }\right)&=f\left(r,z\right)+\int_0^{t\wedge\tau }\Big[B^{(1)}(R_s,Z_s)+B^{(2)}(R_s,Z_s)+B^{(3)}(R_s,Z_s)+B^{(4)}(R_s,Z_s)\Big]ds\notag\\&+M_{t\wedge\tau },
	\end{align}
	where 
	\begin{align*}
		&B^{(1)}(r,z):=-b^{(1)}\partial_1f\left(r,z\right)r(1-r)-b^{(1)}rz\partial_2f\left(r,z\right)+c^{(1)}\left(\partial_{21}f\left(r,z\right)r(1-r)+\partial_{22}f\left(r,z\right)rz\right)\notag\\
		&+c^{(1)}\partial_{12}f\left(r,z\right)(1-r)r+\frac{c^{(1)}}{z}\left(\partial_{11}f\left(r,z\right)r(1-r)^2-\partial_1f\left(r,z\right)2r(1-r)\right),\notag\\
		&B^{(2)(x,y)}:=rz\int_{(0,\infty)}\Bigg[f\left(r\left(1-\frac{u}{z+u}\right)+\frac{u}{z+u},z+u\right)-f\left(r,z\right)\notag\\
		&-u1_{(0,1)}(u)\left(\partial_1f\left(r,z\right)\frac{(1-r)}{z}+\partial_2f\left(r,z\right)\right)\Bigg]m^{(1)}(du)+\eta^{(1)}\partial_1f\left(r,z\right)\frac{(1-r)}{z}+\eta^{(1)}\partial_2f\left(r,z\right)\\&+\int_{(0,\infty)}\left[f\left(r\left(1-\frac{u}{z+u}\right)+\frac{u}{z+u},z+u\right)-f\left(r,z\right)\right]\nu^{(1)}(du),\notag
	\end{align*}
	In addition,
	\begin{align*}
		&B^{(3)}(r,z):=b^{(2)}\partial_1f\left(r,z\right)r(1-r)-b^{(2)}(1-r)z\partial_2f\left(r,z\right)-c^{(2)}\partial_{12}f\left(r,z\right)r(1-r)\notag\\
		&+\frac{c^{(2)}}{z}\left(\partial_{11}f\left(r,z\right)r^2(1-r)+\partial_1f\left(r,z\right)2r(1-r)\right)+c^{(2)}\left(-\partial_{21}f\left(r,z\right)r(1-r)+(1-r)z\partial_{22}f\left(r,z\right)\right).
	\end{align*}
	\begin{align*}
		&B^{(4)}(x,y):=(1-r)z\int_{(0,\infty)}\Bigg[f\left(r\left(1-\frac{u}{z+u}\right),z+u\right)-f\left(r,z\right)\\&-u1_{(0,1)}(u)\left(-\partial_1f\left(r,z\right)\frac{r}{z}+\partial_2f\left(r,z\right)\right)\Bigg]m^{(2)}(du)\notag\\
		&-\eta^{(2)}\partial_1f\left(r,z\right)\frac{r}{z}+\eta^{(2)}\partial_2f\left(r,z\right)+\int_{(0,\infty)}\left[f\left(r\left(1-\frac{u}{z+u}\right),z+u\right)-f\left(r,z\right)\right]\nu^{(2)}(du).
	\end{align*}
	Hence, 
	noting that for $(r,z)\in[0,1]\times[0,\infty)$
	\begin{align*}
		\mathcal{L}f(r,z)=B^{(1)}(r,z)+B^{(2)}(r,z)+B^{(3)}(r,z)+B^{(4)}(r,z),
	\end{align*}
	we obtain the result.
	\section{Proof of Proposition \ref{exi_uni_sde}}\label{App_2}
	\textit{Step 1.-} First we will prove that any solution $R^{(z,r)}$ to \eqref{sde_p_cropped} satisfies that $R^{(z,r)}_t\in[0,1]$ for all $t\geq0$ $\mathbb{P}$-a.s. To this end, let us denote for $x\geq 0$
	\begin{align*}
		b(x)&:=(b^{(2)}-b^{(1)})x(1-x)1_{\{x\in[0,1]\}}+\frac{2}{z}(c^{(2)}-c^{(1)})x(1-x)1_{\{x\in[0,1]\}}+\eta^{(1)}\frac{(1-x)}{z}-\eta^{(2)}\frac{x}{z}\notag\\
		&+x(1-x)1_{\{x\in[0,1]\}}\int_{(0,1)}\frac{w^2}{z+w}m^{(2)}(dw)-x(1-x)1_{\{x\in[0,1]\}}\int_{(0,1)}\frac{w^2}{z+w}m^{(1)}(dw),\notag\\
		\sigma(x)&:=\sqrt{\frac{2}{z}x(1-x)(c^{(1)}(1-x)+c^{(2)}x)}1_{\{x\in[0,1]\}}. 
	\end{align*}
	We note that the following conditions are satisfied:
	\begin{itemize}
		\item[(i)] $\sigma(x)=0$ for all $x\in\R\backslash[0,1]$.
		\item[(ii)] For $x>1$, $$b(x)=\eta^{(1)}\frac{(1-x)}{z}-\eta^{(2)}\frac{x}{z}\leq 0,$$
		and for $x<0$, $$b(x)=\eta^{(1)}\frac{(1-x)}{z}-\eta^{(2)}\frac{x}{z}\geq 0.$$
		\item[(iii)] For $(w,v)\in(0,\infty)^2$ 
		\[
		0\leq h^{(z)}(x,w,v)+x=x-\frac{w}{z+w}x1_{\{v\leq (1-x)z\}}\leq1,\qquad \text{for $x\in[0,1]$,}
		\]
		and $h^{(z)}(x,w,v)=0$ for $x\in\R\backslash[0,1]$.
		\item[(iv)] For $w\in(0,\infty)$ observe that
		\[
		0\leq \tilde{h}^{(z)}(x,w)+x=x-\frac{w}{z+w}x \leq1,\qquad \text{for $x\in[0,1]$.}
		\]
		and $h^{(z)}(x,w)=0$ for $x\in\R\backslash[0,1]$.
		\item[(v)] For $(w,v)\in(0,\infty)^2$
		\[
		0\leq g^{(z)}(x,w,v)+x=x+\frac{w}{z+w}(1-x)1_{\{v\leq xz\}}\leq1,\qquad \text{for $x\in[0,1]$,}
		\]
		and $g^{(z)}(x,w,v)=0$ for $x\in\R\backslash[0,1]$.
		\item[(vi)] For $w\in(0,\infty)$ observe that
		\[
		0\leq \tilde{g}^{(z)}(x,w)+x=x+\frac{w}{z+w}(1-x) \leq1,\qquad \text{for $x\in[0,1]$.}
		\]
		and $\tilde{g}^{(z)}(x,w)=0$ for $x\in\R\backslash[0,1]$.
	\end{itemize}
	
	Then by a modification of Proposition 2.1 in \cite{LF}, extending the proof to the case of two boundaries (see also Corollary 6.2 in [32]), we have that $\mathbb{P}(R^{(z,r)}_t\in[0,1] \ \text{for all $t\geq0$})=1$.
	
	\textit{Step 2.-} To prove the existence of a strong solution to \eqref{sde_p_cropped}, we first obtain the following estimations:
	
	(i) For $x,y\in[0,1]$ we obtain
	\begin{align}\label{bound_drift_1}
		|b(x)&-b(y)|\notag\\&\leq \Bigg(|b^{(2)}-b^{(1)}|+\frac{2}{z}|c^{(2)}-c^{(1)}|+\frac{\eta^{(1)}}{z}+\frac{\eta^{(2)}}{z}+\int_{(0,1)}\frac{w^2}{z+w}(m^{(1)}+m^{(2)})(dw)\Bigg)|x-y|.
	\end{align}
	Additionally, for $x,y\in[0,1]$
	\begin{align}\label{bound_drift_2}
		\int_{(0,\infty)}|\tilde{g}^{(z)}(x,w)-\tilde{g}^{(z)}(y,w)|\nu^{(1)}(dw)&=\int_{(0,\infty)}\frac{w}{z+w}\nu^{(1)}(dw)|x-y|,\notag\\
		\int_{(0,\infty)}|\tilde{h}^{(z)}(x,w)-\tilde{h}^{(z)}(y,w)|\nu^{(2)}(dw)&=\int_{(0,\infty)}\frac{w}{z+w}\nu^{(2)}(dw)|x-y|.	
	\end{align}
	Now we note that for $x,y\in[0,1]$ and $(w,v)\in(0,\infty)^2$
	\begin{align}\label{g_h_lipschitz}
		|g^{(z)}(x,w,v)-g^{(z)}(y,w,v)|&\leq \frac{w}{z+w}\big(|x-y|1_{\{v\leq z(x\wedge y)\}}+(1-x)1_{\{yz<v\leq xz\}}\notag\\&+(1-y)1_{\{xz<v\leq yz\}}\big),\notag\\
		|h^{(z)}(x,w,v)-h^{(z)}(y,w,v)|&\leq \frac{w}{z+w}\big(|x-y|1_{\{v\leq z((1-x)\wedge (1-y))\}}+x1_{\{(1-y)z<v\leq (1-x)z\}}\notag\\&+y1_{\{(1-x)z<v\leq (1-y)z\}}\big).
	\end{align}
	Therefore, for $x,y\in[0,1]$
	\begin{align}\label{bound_drift_3}
		\int_{(0,\infty)}\int_{[1,\infty)}|g^{(z)}(x,w,v)-g^{(z)}(y,w,v)|m^{(1)}(dw)dv\leq 3z\int_{[1,\infty)}\frac{w}{z+w}m^{(1)}(dw)|x-y|,\notag\\
		\int_{(0,\infty)}\int_{[1,\infty)}|h^{(z)}(x,w,v)-h^{(z)}(y,w,v)|m^{(2)}(dw)dv\leq 3z\int_{[1,\infty)}\frac{w}{z+w}m^{(2)}(dw)|x-y|.
	\end{align}
	Hence, by using \eqref{bound_drift_1}, \eqref{bound_drift_2}, and \eqref{bound_drift_3} we have that for every $x,y\in[0,1]$ there exists $K_1(z)>0$ such that
	\begin{align}\label{ex_un_1}
		|b(x)-b(y)|&+\int_{(0,\infty)}|\tilde{g}^{(z)}(x,w)-\tilde{g}^{(z)}(y,w)|\nu^{(1)}(dw)+\int_{(0,\infty)}|\tilde{h}^{(z)}(x,w)-\tilde{h}^{(z)}(y,w)|\nu^{(2)}(dw)\notag\\
		&+\int_{(0,\infty)}\int_{[1,\infty)}|g^{(z)}(x,w,v)-g^{(z)}(y,w,v)|m^{(1)}(dw)dv\notag\\&+\int_{(0,\infty)}\int_{[1,\infty)}|h^{(z)}(x,w,v)-h^{(z)}(y,w,v)|m^{(2)}(dw)dv\leq K_1(z)|x-y|.
	\end{align}
	(ii) Meanwhile, for $x,y\in[0,1]$
	\begin{align*}
		|\sigma(x)-\sigma(y)|^2\leq\frac{6(c^{(1)}+c^{(2)})}{z}|x-y|.
	\end{align*}
	In addition, by \eqref{g_h_lipschitz}, we have for $x,y\in[0,1]$
	\begin{align}\label{ex_un_3}
		\int_{(0,\infty)}\int_{(0,1)}|g^{(z)}(x,w,v)-g^{(z)}(y,w,v)|^2m^{(1)}(dw)dv&\leq3z \int_{(0,\infty)}\frac{w^2}{(z+w)^2}m^{(1)}(dw)|x-y|,\notag\\
		\int_{(0,\infty)}\int_{(0,1)}|h^{(z)}(x,w,v)-h^{(z)}(y,w,v)|^2m^{(2)}(dw)dv&\leq3z \int_{(0,\infty)}\frac{w^2}{(z+w)^2}m^{(2)}(dw)|x-y|.
	\end{align}
	The previous identities imply that there exists $K_2(z)>0$
	\begin{align}\label{ex_un_2}
		|\sigma(x)-\sigma(y)|^2&+\int_{(0,\infty)}\int_{(0,1)}|g^{(z)}(x,w,v)-g^{(z)}(y,w,v)|^2m^{(1)}(dw)dv\notag\\
		&+\int_{(0,\infty)}\int_{(0,1)}|h^{(z)}(x,w,v)-h^{(z)}(y,w,v)|^2m^{(2)}(dw)dv\leq K_2(z)|x-y|.
	\end{align}
	(iii) We note that for $x\in[0,1]$, the mapping
	\begin{align*}
		x\mapsto x+g^{(z)}(x,w,v)=x+\frac{w}{z+w}(1-x)1_{\{v\leq xz\}}=
		\begin{cases} x &\mbox{if } v> xz, \\
			\displaystyle x\left(1-\frac{w}{z+w}\right)+\frac{w}{z+w} & \mbox{if } v\leq xz. \end{cases} 
	\end{align*}
	is non-decreasing for fixed $(w,v)\in(0,\infty)^2$. In addition, the mapping
	\begin{align*}
		x\mapsto x+h^{(z)}(x,w,v)=x-\frac{w}{z+w}x1_{\{v\leq (1-x)z\}}=
		\begin{cases} x &\mbox{if } v> (1-x)z, \\
			\displaystyle x\left(1-\frac{w}{z+w}\right) & \mbox{if } v\leq (1-x)z, \end{cases} 
	\end{align*}
	is also non-decreasing for fixed $(w,v)\in(0,\infty)^2$.
	
	(iv) We note that
	\begin{align*}
		\int_{(0,\infty)}\int_{(0,\infty)}&|g^{(z)}(x,w,v)|^2m^{(1)}(dw)dv+\int_{(0,\infty)}\int_{(0,\infty)}|h^{(z)}(x,w,v)|^2m^{(2)}(dw)dv\\
		&+\int_{(0,\infty)}|\tilde{g}^{(z)}(x,w,v)|^2\nu^{(1)}(dw)+\int_{(0,\infty)}|\tilde{h}^{(z)}(x,w,v)|^2\nu^{(2)}(dw)\notag\\
		&\leq\sum_{i=1}^2\left(z\int_{(0,\infty)}\frac{w^2}{(z+w)^2}m^{(i)}(dw)+\int_{(0,\infty)}\frac{w^2}{(z+w)^2}\nu^{(i)}(dw)\right).
	\end{align*}
	Hence, by proceeding as in \eqref{ex_un_1} and \eqref{ex_un_2}, we can find a constant $K(z)>0$ such that for every $x\in\R$
	\begin{align}\label{ex_un_new}
		|\sigma(x)&|^2+|b(x)|^2+\int_{(0,\infty)}\int_{(0,\infty)}|g^{(z)}(x,w,v)|^2m^{(1)}(dw)dv\notag\\&+\int_{(0,\infty)}\int_{(0,\infty)}|h^{(z)}(x,w,v)|^2m^{(2)}(dw)dv\notag\\
		&+\int_{(0,\infty)}|\tilde{g}^{(z)}(x,w,v)|^2\nu^{(1)}(dw)+\int_{(0,\infty)}|\tilde{h}^{(z)}(x,w,v)|^2\nu^{(2)}(dw)\notag\\
		&+\Bigg[\int_{(0,\infty)}\int_{[1,\infty)}|g^{(z)}(x,w,v)|m^{(1)}(dw)dv+\int_{(0,\infty)}\int_{[1,\infty)}|h^{(z)}(x,w,v)|m^{(2)}(dw)dv\notag\\
		&+\int_{(0,\infty)}|\tilde{g}^{(z)}(x,w,v)|\nu^{(1)}(dw)+\int_{(0,\infty)}|\tilde{h}^{(z)}(x,w,v)|\nu^{(2)}(dw)dv\Bigg]^2\leq K(z)(1+|x|^2).
	\end{align}
	The fact that the mappings $x\mapsto x+g^{(z)}(x,w,v)$ and $x\mapsto x+h^{(z)}(x,w,v)$ are non-decreasing for $(x,w,v)\in[0,1]\times(0,\infty)^2$, allows us to apply Lemma 3.1 in \cite{LP}.
	Hence, the inequalities \eqref{ex_un_1}, \eqref{ex_un_2}, and \eqref{ex_un_new} 
	imply, using a slight modification of Theorem 5.1 in \cite{LP}, that there exists a unique strong solution to \eqref{sde_p_cropped}.
	
	\textit{Step 3.-} To prove the last assertion in the statement, we obtain by the proof of Theorem 3.2 \cite{LP} together with the estimates \eqref{ex_un_1}, \eqref{ex_un_2}, and \eqref{ex_un_new}, and the fact that the mappings $x\mapsto x+g^{(z)}(x,w,v)$ and $x\mapsto x+h^{(z)}(x,w,v)$ are non-decreasing for $(x,w,v)\in[0,1]\times(0,\infty)^2$, that for $r,\overline{r}\in[0,1]$
	\begin{align*}
		\E\left[|R^{(r,z)}_t-R^{(\overline{r},z)}_t|\right]\leq|r-\overline{r}|+K_1(z)\int_0^t\E\left[|R^{(r,z)}_s-R^{(\overline{r},z)}_s|\right]ds, \ \text{$t\geq0$.}
	\end{align*}
	Hence, by an application of Gronwall's inequality, we obtain \eqref{estimate}.
	\section{Proof of Theorem \ref{large_pop}}\label{App_3}
	(i) First we note that $R^{(\infty,r)}$ is a solution to the following ordinary differential equation
	\begin{align}\label{ode_p_cropped}
		&dR^{(\infty,r)}_t=R^{(\infty,r)}_t(1-R^{(\infty,r)}_t)\left(b^{(2)}-\int_{[1,\infty)}wm^{(2)}(dw)-b^{(1)}+\int_{[1,\infty)}wm^{(1)}(dw)\right)dt,\quad  t> 0,
	\end{align}
	with $R^{(\infty,r)}_0=r$.
	
	Hence, by using \eqref{sde_p_cropped} together with \eqref{ode_p_cropped} we obtain that
	\begin{align}\label{bound_conv_z_0}
		R^{(z,r)}_t-R^{(\infty,r)}_t=A^{(1,z)}_t+A^{(2,z)}_t+A^{(3,z)}_t,\qquad t\geq 0,
	\end{align}
	where for $t\geq0$
	\begin{align}\label{def_A}
		&A^{(1,z)}_t:=\int_0^t\sqrt{\frac{2}{z}R^{(z,r)}_{s-}(1-R^{(z,r)}_{s-})[c^{(1)}(1-R^{(z,r)}_{s-})+c^{(2)}R^{(z,r)}_{s-}]}dB_s\notag\\
		&+\int_0^t\int_{(0,\infty)^2}g^{(z)}(R^{(z,r)}_{s-},w,v)\tilde{N}_1(ds,dw,dv)+\int_0^t\int_{(0,\infty)^2}h^{(z)}(R^{(z,r)}_{s-},w,v)\tilde{N}_2(ds,dw,dv)\notag\\
		&+\int_0^t\int_{(0,\infty)}\tilde{g}^{(z)}(R^{(z,r)}_{s-},w)\tilde{N}_3(ds,dw)+\int_0^t\int_{(0,\infty)}\tilde{h}^{(z)}(R^{(z,r)}_{s-},w)\tilde{N}_4(ds,dw)\notag\\
		&A^{(2,z)}_t:=\int_0^t\Bigg[R^{(z,r)}_s(1-R^{(z,r)}_s)\Bigg(\frac{2}{z}(c^{(2)}-c^{(1)})+\int_{(0,1)}\frac{w^2}{w+z}m^{(2)}(dw)-\int_{(0,1)}\frac{w^2}{w+z}m^{(1)}(dw)\Bigg)\Bigg]ds\notag\\
		&+\int_0^t\Bigg[\eta^{(1)}\frac{(1-R^{(z,r)}_s)}{z}-\eta^{(2)}\frac{R^{(z,r)}_s}{z}\Bigg]ds+\int_0^t\int_{(0,\infty)}\tilde{g}^{(z)}(R^{(z,r)}_s,w)\nu^{(1)}(dw)ds\notag\\&+\int_0^t\int_{(0,\infty)}\tilde{h}^{(z)}(R^{(z,r)}_s,w)\nu^{(2)}(dw)ds,
	\end{align}
	and
	\begin{align}\label{def_A_3}
		&A^{(3,z)}_t:=\int_0^t\int_{[1,\infty)}R^{(z,r)}_s(1-R^{(z,r)}_s)\frac{wz}{w+z}m^{(1)}(dw)ds-\int_0^t\int_{[1,\infty)}R^{(\infty,r)}_s(1-R^{(\infty,r)}_s)wm^{(1)}(dw)ds\notag\\
		&-\int_0^t\int_{[1,\infty)}R^{(z,r)}_s(1-R^{(z,r)}_s)\frac{wz}{w+z}m^{(2)}(dw)ds+\int_0^t\int_{[1,\infty)}R^{(\infty,r)}_s(1-R^{(\infty,r)}_s)wm^{(2)}(dw)ds\notag\\
		&+\int_0^t(b^{(2)}-b^{(1)})R^{(z,r)}_s(1-R^{(z,r)}_s)ds-\int_0^t(b^{(2)}-b^{(1)})R^{(\infty,r)}_s(1-R^{(\infty,r)}_s)ds.
	\end{align}
	(ii) We will start by obtaining some estimations for the term $A^{(1,z)}$, so using Doob's inequality we have  for $t\in[0,T]$ 
	\begin{align}\label{est_mart_cont}
		\E\Bigg[\Bigg(\sup_{u\leq t}&\int_0^u\sqrt{\frac{2}{z}R^{(z,r)}_{s-}(1-R^{(z,r)}_{s-})[c^{(1)}(1-R^{(z,r)}_{s-})+c^{(2)}R^{(z,r)}_{s-}]}dB_s\Bigg)^2\Bigg]\notag\\&\leq\frac{C_1}{z}\E\left[\int_0^T\left(R^{(z,r)}_s(1-R^{(z,r)}_s)[c^{(1)}(1-R^{(z,r)}_s)+c^{(2)}R^{(z,r)}_s]\right)ds\right]\leq \frac{C_1}{z}(c^{(1)}+c^{(2)})T,
	\end{align}
	for some constant $C_1>0$.
	Next, by another application of Doob's inequality, there exists a constant $C_2>0$ such that for $t\in[0,T]$ 
	\begin{align}\label{est_mart_discont_1}
		\E\Bigg[\Bigg(\sup_{u\leq t}\int_0^u\int_{(0,\infty)^2}&g^{(z)}(R^{(z,r)}_{s-},w,v)\tilde{N}_1(ds,dw,dv)\Bigg)^2\Bigg]\notag\\
		&\leq C_2\E\left[\int_0^T\int_{(0,\infty)}\left(\frac{w}{z+w}(1-R^{(z,r)}_s)\right)^2zR^{(z,r)}_sm^{(1)}(dw)ds\right]\notag\\&\leq C_2\left(\frac{1}{z}\int_{(0,1)}w^2m^{(1)}(dw)+\int_{[1,\infty)}\frac{w^2z}{(w+z)^2}m^{(1)}(dw)\right)T.
	\end{align}
	Proceeding as in \eqref{est_mart_discont_1}, we have for $t\in[0,T]$ 
	\begin{align}\label{est_mart_discont_2}
		\E\Bigg[\Bigg(\sup_{u\leq t}\int_0^u\int_{(0,\infty)^2}&h^{(z)}(R^{(z,r)}_{s-},w,v)\tilde{N}_2(ds,dw,dv)\Bigg)^2\Bigg]
		\notag\\&\leq C_3\left(\frac{1}{z}\int_{(0,1)}w^2m^{(2)}(dw)+\int_{[1,\infty)}\frac{w^2z}{(w+z)^2}m^{(2)}(dw)\right)T,
	\end{align}
	and
	\begin{align}\label{est_mart_discont_3}
		\E\Bigg[\Bigg(\sup_{u\leq t}\int_0^u\int_{(0,\infty)}&\tilde{g}^{(z)}(R^{(z,r)}_{s-},w)\tilde{N}_3(ds,dw)\Bigg)^2\Bigg]\notag\\&+\E\Bigg[\Bigg(\sup_{u\leq t}\int_0^u\int_{(0,\infty)}\tilde{h}^{(z)}(R^{(z,r)}_{s-},w)\tilde{N}_4(ds,dw)\Bigg)^2\Bigg]\notag\\
		&\leq C_3\E\left[\int_0^T\int_{(0,\infty)}\frac{w^2}{(w+z)^2}(1-R^{(z,r)}_s)^2\nu^{(1)}(dw)ds\right]\notag\\&+C_3\E\left[\int_0^T\int_{(0,\infty)}\frac{w^2}{(w+z)^2}(R^{(z,r)}_s)^2\nu^{(2)}(dw)ds\right]\notag\\
		&\leq C_3T\left(\int_{(0,\infty)}\frac{w^2}{(w+z)^2}\nu^{(1)}(dw)+\int_{(0,\infty)}\frac{w^2}{(w+z)^2}\nu^{(2)}(dw)\right),
	\end{align}
	for $t\in[0,T]$ and some constant $C_3>0$.
	
	Therefore, by using inequalities \eqref{est_mart_cont}, \eqref{est_mart_discont_1}, \eqref{est_mart_discont_2}, and \eqref{est_mart_discont_3} together with the fact that $\int_{[1,\infty)}wm^{(i)}(dw)<\infty$ for $i=1,2$, we can find a constant $K_1(T,z)>0$ such that
	\begin{align}\label{bound_conv_z_1}
		\E\left[\left(\sup_{u\leq t}A^{(1,z)}_u\right)^2\right]\leq K_1(T,z),\qquad t\in[0,T],
	\end{align}
	such that $\lim_{z\to\infty}K_1(T,z)=0$.
	
	(ii) Now, for the term $A^{(2,z)}$ we note that for $t\in[0,T]$ 
	\begin{align}\label{bv_bounds_1}
		\E\Bigg[\Bigg(\sup_{u\leq t}\int_0^u\Bigg[&R^{(z,r)}_s(1-R^{(z,r)}_s)\Bigg(\frac{2}{z}(c^{(2)}-c^{(1)})+\int_{(0,1)}\frac{w^2}{w+z}m^{(2)}(dw)-\int_{(0,1)}\frac{w^2}{w+z}m^{(1)}(dw)\Bigg)\notag\\
		&+\eta^{(1)}\frac{(1-R^{(z,r)}_s)}{z}ds-\eta^{(2)}\frac{R^{(z,r)}_s}{z}\Bigg]ds\Bigg)^2\Bigg]\notag\\
		&\leq \frac{T^2}{z^2}\Bigg(2|c^{(2)}-c^{(1)}|+\int_{(0,1)}w^2m^{(2)}(dw)+\int_{(0,1)}w^2m^{(1)}(dw)+\eta^{(1)}+\eta^{(2)}\Bigg)^2,
	\end{align}
	and
	\begin{align}\label{bv_bounds_2}
		\E\Bigg[\sup_{u\leq t}\Bigg(&\int_0^u\int_{(0,\infty)}\tilde{g}^{(z)}(R^{(z,r)}_s,w)\nu^{(1)}(dw)ds\blue{+}\int_0^u\int_{(0,\infty)}\tilde{h}^{(z)}(R^{(z,r)}_s,w)\nu^{(2)}(dw)ds\Bigg)^2\Bigg]\notag\\
		&\leq T^2\Bigg(\int_{(0,\infty)}\frac{w}{w+z}\nu^{(1)}(dw)+\int_{(0,\infty)}\frac{w}{w+z}\nu^{(2)}(dw)\Bigg)^2, \qquad   t\in[0,T].
	\end{align}
	Hence, by \eqref{bv_bounds_1} and \eqref{bv_bounds_2} there exists a constant $K_2(T,z)>0$ such that
	\begin{align}\label{bound_conv_z_2}
		\E\left[\left(\sup_{u\leq t}A^{(2,z)}_u\right)^2\right]\leq K_2(T,z),\qquad t\in[0,T],
	\end{align}
	such that $\lim_{z\to\infty}K_2(T,z)=0$.
	
	(ii) For the term $A^{(3,z)}$, we have for $t\in[0,T]$
	\begin{align}\label{lip_bounds_1}
		\E\Bigg[\sum_{i=1}^2\Bigg(\sup_{u\leq t}\Bigg|\int_0^u\int_{(1,\infty)}&R^{(z,r)}_s(1-R^{(z,r)}_s)\frac{wz}{w+z}m^{(i)}(dw)ds\notag\\&-\int_0^u\int_{(1,\infty)}R^{(\infty,r)}_s(1-R^{(\infty,r)}_s)wm^{(i)}(dw)ds\Bigg|\Bigg)^2\Bigg]\notag\\
		&\leq 9\sum_{i=1}^2\left(\int_{[1,\infty)}wm^{(i)}(dw)\right)^2T\E\left[\int_0^t\sup_{u\leq s}|R^{(z,r)}_u-R^{(\infty,r)}_u|^2du\right]\notag\\&+9T^2\sum_{i=1}^2\left(\int_{[1,\infty)}\frac{w^2}{w+z}m^{(i)}(dw)\right)^2,
	\end{align}
	and,
	\begin{align}\label{lip_bounds_2}
		\E\Bigg[\Bigg(\sup_{u\leq t}\Bigg|&\int_0^t(b^{(2)}-b^{(1)})R^{(z,r)}_s(1-R^{(z,r)}_s)ds-\int_0^t(b^{(2)}-b^{(1)})R^{(\infty,r)}_s(1-R^{(\infty,r)}_s)ds\Bigg|\Bigg)^2\Bigg]\notag\\
		&\leq 9(b^{(2)}-b^{(1)})^2T\E\left[\int_0^t\sup_{u\leq s}|R^{(z,r)}_u-R^{(\infty,r)}_u|^2du\right], \qquad t\in[0,T].
	\end{align}
	Therefore, by using \eqref{lip_bounds_1} and \eqref{lip_bounds_2}, we can find constants  $K_3(T,z),K_4(T)>0$ such that
	\begin{align}\label{bound_conv_z_3}
		\E\left[\left(\sup_{u\leq t}A^{(3,z)}_u\right)^2\right]\leq K_3(T,z)+K_4(T)\E\left[\int_0^t\sup_{u\leq s}|R^{(z,r)}_u-R^{(\infty,r)}_u|^2du\right],\qquad t\in[0,T],
	\end{align}
	such that $\lim_{z\to\infty}K_3(T,z)=0$.
	
	(iv) By \eqref{bound_conv_z_1}, \eqref{bound_conv_z_2} and \eqref{bound_conv_z_3} together with \eqref{bound_conv_z_0}, we have for $t\in[0,T]$,
	\begin{align*}
		\E\left[\sup_{u\leq t}|R^{(z,r)}_u-R^{(\infty,r)}_u|^2\right]\leq \sum_{i=1}^3K_i(T,z)+K_4(T)\int_0^t\E\left[\sup_{u\leq s}|R^{(z,r)}_u-R^{(\infty,r)}_u|^2\right]du.
	\end{align*}
	Hence, by an application of Gronwall's inequality, we obtain that for $T>0$ 
	\begin{align}\label{gron}
		\E\left[\sup_{u\leq T}|R^{(z,r)}_u-R^{(\infty,r)}_u|^2\right]\leq \sum_{i=1}^3K_i(T,z)e^{K_4(T)T}\to 0,\qquad\text{as $z\to\infty$.}
	\end{align} 
	\section{Proof of Lemma \ref{iden-limit}}\label{App_4}
	We will characterize the finite-dimensional distributions of the weak limit $Y^{(\infty)}$ by means of its characteristic function. To this end, consider $0\leq t_0<t_1<\dots<t_n\leq T$ and $a_i\in \R$ for $i=1,\dots,n$, and denote
	\begin{align*}
		\Phi(\lambda,z)=\E\left[e^{i\lambda \sum_{i=1}^na_i(X^{(z)}(t_i)-X^{(z)}(t_{i-1}))}\right],\qquad  \lambda\in\R.
	\end{align*}
	Using the fact that $B$, $\tilde{N}_1(dt,du,dv)$, and $\tilde{N}_2(dt,du,dv)$ are independent, we obtain for $\lambda\in\R$
	\begin{align}\label{char_0}
		\Phi(\lambda,z)=\Phi_1(\lambda)\prod_{i=2}^3\Phi_i(\lambda,z),
	\end{align}
	where
	\begin{align*}
		\Phi_1(\lambda)&=\E\left[\exp\left\{i\lambda \sum_{i=1}^na_i\int_{t_{i-1}}^{t_i}e^{U_t}\sqrt{2R^{(\infty,r)}_t(1-R^{(\infty,r)}_t)[c^{(1)}(1-R^{(\infty,r)}_t)+c^{(2)}R^{(\infty,r)}_t]}dB_t\right\}\right],\\
		\Phi_2(\lambda,z)&=\E\left[\exp\left\{i\lambda \sum_{i=1}^na_i\sqrt{z}\int_{t_{i-1}}^{t_i}\int_{(0,\infty)^2}e^{U_t}g^{(z)}(R^{(\infty,r)}_t,w,v)\tilde{N}_1(dt,dw,dv)\right\}\right],\\
		\Phi_3(\lambda,z)&=\E\left[\exp\left\{i\lambda \sum_{i=1}^na_i\sqrt{z}\int_{t_{i-1}}^{t_i}\int_{(0,\infty)^2}e^{U_t}h^{(z)}(R^{(\infty,r)}_t,w,v)\tilde{N}_2(dt,dw,dv)\right\}\right].
	\end{align*}
	Noting that the process $R^{(\infty,r)}$ is deterministic, we obtain
	\begin{align}\label{char_1}
		\Phi_1(\lambda)&=\prod_{i=1}^n\E\left[\exp\left\{i\lambda a_i\int_{t_{i-1}}^{t_i}e^{U_t}\sqrt{2R^{(\infty,r)}_t(1-R^{(\infty,r)}_t)[c^{(1)}(1-R^{(\infty,r)}_t)+c^{(2)}R^{(\infty,r)}_t]}dB_t\right\}\right]\notag\\
		&=\prod_{i=1}^n\exp\left\{-\frac{(\lambda a_i)^2}{2}\int_{t_{i-1}}^{t_i}e^{2U_t}[2R^{(\infty,r)}_t(1-R^{(\infty,r)}_t)[c^{(1)}(1-R^{(\infty,r)}_t)+c^{(2)}R^{(\infty,r)}_t]dt\right\}.
	\end{align}
	The exponential formula for Poisson random measures allows us to write
	\begin{align*}
		\Phi_2(\lambda,z)&=\prod_{i=1}^n\E\left[\exp\left\{i\lambda a_i\sqrt{z}\int_{t_{i-1}}^{t_i}\int_{(0,\infty)^2}e^{U_s}g^{(z)}(R^{(\infty,r)}_s,w,v)\tilde{N}_1(ds,dw,dv)\right\}\right]\notag\\
		&=\prod_{i=1}^n\exp\Bigg\{-\int_{t_{i-1}}^{t_i}\int_{(0,\infty)}\Bigg(1-\exp\left\{i\lambda a_ie^{U_s}(1-R^{(\infty,r)}_s)\frac{w\sqrt{z}}{z+w}\right\}\notag\\&\hspace{ 5cm}+i\lambda a_ie^{U_s}(1-R^{(\infty,r)}_s)\frac{w\sqrt{z}}{z+w}\Bigg)zR^{(\infty,r)}_sm^{(1)}(dw)ds\Bigg\}.
	\end{align*}
	Hence, since $\int_{(0,\infty)}w^2m^{(1)}(dw)<\infty$, we have
	\begin{align}\label{char_2}
		\lim_{z\to\infty}\Phi_2(\lambda,z)=\prod_{i=1}^n\exp\left\{-\frac{(\lambda a_i)^2}{2}\int_{(0,\infty)}w^2m^{(1)}(dw)\int_{t_{i-1}}^{t_i}e^{2U_s}(1-R^{(\infty,r)}_s)^2R^{(\infty,r)}_sds\right\}.
	\end{align}
	Proceeding as in \eqref{char_2}
	\begin{align*}
		&\Phi_3(\lambda,z)=\prod_{i=1}^n\E\left[\exp\left\{i\lambda a_i\sqrt{z}\int_{t_{i-1}}^{t_i}\int_{(0,\infty)^2}e^{U_s}h^{(z)}(R^{(\infty,r)}_s,w,v)\tilde{N}_2(ds,dw,dv)\right\}\right]\notag\\
		&=\prod_{i=1}^n\exp\Bigg\{-\int_{t_{i-1}}^{t_i}\int_{(0,\infty)}\left(1-\exp\Bigg\{-i\lambda a_ie^{U_s}R^{(\infty,r)}_s\frac{w\sqrt{z}}{z+w}\right\}\notag\\&\hspace{ 5cm}-i\lambda a_ie^{U_s}R^{(\infty,r)}_s\frac{w\sqrt{z}}{z+w}\Bigg)z(1-R^{(\infty,r)}_s)m^{(2)}(dw)ds\Bigg\}.
	\end{align*}
	Now, because $\int_{(0,\infty)}z^2m^{(2)}(dz)<\infty$, we obtain
	\begin{align}\label{char_3}
		\lim_{z\to\infty}\Phi_3(\lambda,z)=\prod_{i=1}^n\exp\left\{-\frac{(\lambda a_i)^2}{2}\int_{(0,\infty)}w^2m^{(2)}(dw)\int_{t_{i-1}}^{t_i}e^{2U_s}(1-R^{(\infty,r)}_s)(R^{(\infty,r)}_s)^2ds\right\}.
	\end{align}
	Using \eqref{char_1}, \eqref{char_2}, and \eqref{char_3} in \eqref{char_0}, we obtain that
	\begin{align}\label{char_5}
		&\E\Big[e^{i\lambda \sum_{i=1}^na_i(Y_{t_i}^{(\infty)}-Y_{t_{i-1}}^{(\infty)})}\Big]\notag\\&=\lim_{z\to\infty}\Phi(z,\lambda)=\prod_{i=1}^n\exp\left\{-\frac{(\lambda a_i)^2}{2}\int_{t_{i-1}}^{t_i}e^{2U_t}R^{(\infty,r)}_t(1-R^{(\infty,r)}_t)[\sigma^1(1-R^{(\infty,r)}_t)+\sigma^2R^{(\infty,r)}_t]dt\right\},
	\end{align}
	where $\sigma^i=2c^{(i)}+\int_{(0,\infty)}w^2m^{(i)}(dw)$ for $i=1,2$. Hence, \eqref{char_5} implies that the random vector $(Y^{(\infty)}_{t_1},Y^{(\infty)}_{t_2}-Y^{(\infty)}_{t_1},\dots,Y^{(\infty)}_{t_n}-Y^{(\infty)}_{t_{n-1}})$ has a Gaussian distribution, and hence $Y^{(\infty)}$ is a Gaussian process.
	
	We will now compute the covariance function of the process $Y^{(\infty)}$. By \eqref{char_5} we obtain for $t_1<t_2$
	\begin{align*}
		\E\Big[&e^{i(b_1Y^{(\infty)}_{t_1}+b_2Y^{(\infty)}_{t_2})}\Big]\notag\\&=\exp\Bigg\{-\sum_{i=1}^2\frac{b_i^2}{2}\int_{0}^{t_i}e^{2U_t}R^{(\infty,r)}_t(1-R^{(\infty,r)}_t)[\sigma^1(1-R^{(\infty,r)}_t)+\sigma^2R^{(\infty,r)}_t]dt\\
		&-b_1b_2\int_{0}^{t_1}e^{2U_t}R^{(\infty,r)}_t(1-R^{(\infty,r)}_t)[\sigma^1(1-R^{(\infty,r)}_t)+\sigma^2R^{(\infty,r)}_t]dt\Bigg\}, \qquad b_1,b_2\in\R.
	\end{align*}
	Therefore, the covariance function of the process $Y^{(\infty)}$ is given for, $s,t\geq0$, by
	\begin{align*}
		C_{Y^{(\infty)}}(s,t)=\int_0^{s\wedge t}e^{2U_u}R^{(\infty,r)}_u(1-R^{(\infty,r)}_u)[\sigma^1(1-R^{(\infty,r)}_u)+\sigma^2R^{(\infty,r)}_u]du.
	\end{align*}
	\section{Proof of Lemma \ref{conv_A_2}}\label{App_5}
	Using Doob's inequality together with It\^o's isometry we obtain
	\begin{align}\label{con_mart_fluc_0}
		&\E\Bigg(\sup_{t\in[0,T]}\Bigg|\int_0^te^{U_s}\sqrt{2R^{(z,r)}_{s-}(1-R^{(z,r)}_{s-})[c^{(1)}(1-R^{(z,r)}_{s-})+c^{(2)}R^{(z,r)}_{s-}]}dB_s\notag\\&\hspace{ 3cm}-\int_0^te^{U_s}\sqrt{2R^{(\infty,r)}_s(1-R^{(\infty,r)}_s)[c^{(1)}(1-R^{(\infty,r)}_s)+c^{(2)}R^{(\infty,r)}_s]}dB_s\Bigg|^2\Bigg)\notag\\
		&\leq4 \E\Bigg[\int_0^Te^{2U_s}\Bigg(\sqrt{2R^{(z,r)}_s(1-R^{(z,r)}_s)[c^{(1)}(1-R^{(z,r)}_s)+c^{(2)}R^{(z,r)}_s]}\notag\\&\hspace{ 4cm}-\sqrt{2R^{(\infty,r)}_s(1-R^{(\infty,r)}_s)[c^{(1)}(1-R^{(\infty,r)}_s)+c^{(2)}R^{(\infty,r)}_s]}\Bigg)^2ds\Bigg]\notag\\
		&\leq4 \E\Bigg[\int_0^Te^{2U_s}\Bigg|2R^{(z,r)}_s(1-R^{(z,r)}_s)[c^{(1)}(1-R^{(z,r)}_s)+c^{(2)}R^{(z,r)}_s]\notag\\&\hspace{ 5cm}-2R^{(\infty,r)}_s(1-R^{(\infty,r)}_s)[c^{(1)}(1-R^{(\infty,r)}_s)+c^{(2)}R^{(\infty,r)}_s]\Bigg|ds\Bigg]\notag\\
		&\leq 12(c^{(1)}+c^{(2)})\int_0^Te^{2U_s}\E\left[|R^{(z,r)}_s-R^{(\infty,r)}_s|\right]ds\leq 12(c^{(1)}+c^{(2)})C(T)\E\left[\sup_{t\in[0,T]}|R^{(z,r)}_t-R^{(\infty,r)}_t|^2\right]^{1/2},
	\end{align}
	where $C(T)=\int_0^Te^{2U_s}ds$.
	In a similar way, by Doob's inequality 
	\begin{align}\label{con_mart_fluc_1}
		&\E\Bigg[\sup_{t\in[0,T]}\Bigg|\int_0^t\int_{(0,\infty)^2}\sqrt{z}e^{U_s}g^{(z)}(R^{(z,r)}_{s-},w,v)\tilde{N}_1(ds,dw,dv)\notag\\&\hspace{ 5cm}-\int_0^t\int_{(0,\infty)^2}\sqrt{z}e^{U_s}g^{(z)}(R^{(\infty,r)}_s,w,v)\tilde{N}_1(ds,dw,dv)\Bigg|^2\Bigg]\notag\\
		&\leq4 \E\Bigg[\int_0^T\int_{(0,\infty)}e^{2U_s}|R^{(\infty,r)}_s-R^{(z,r)}_s|^2\frac{w^2z^2}{(z+w)^2}m^{(1)}(dw)ds\Bigg]\notag\\&\hspace{ 5cm}+4\E\Bigg[\int_0^T\int_{(0,\infty)}e^{2U_s}|R^{(\infty,r)}_s-R^{(z,r)}_s|\frac{w^2z^2}{(z+w)^2}m^{(1)}(dw)ds\Bigg]\notag\\
		&\leq 8C(T)\int_{(0,\infty)}w^2m^{(1)}(dw)\E\left[\sup_{t\in[0,T]}|R^{(z,r)}_t-R^{(\infty,r)}_t|^2\right]^{1/2}.
	\end{align}
	Proceeding as in \eqref{con_mart_fluc_1}, we obtain
	\begin{align}\label{con_mart_fluc_2}
		&\E\Bigg[\sup_{t\in[0,T]}\Bigg|\int_0^t\int_{(0,\infty)^2}e^{U_s}\sqrt{z}h^{(z)}(R^{(z,r)}_{s-},w,v)\tilde{N}_2(ds,dw,dv)\notag\\&\hspace{ 3cm}-\int_0^t\int_{(0,\infty)^2}\sqrt{z}e^{U_s}h^{(z)}(R^{(\infty,r)}_{s},w,v)\tilde{N}_2(ds,dw,dv)\Bigg|^2\Bigg]\notag\\
		&\leq 8C(T)\int_{(0,\infty)}w^2m^{(2)}(dw)\E\left[\sup_{t\in[0,T]}|R^{(z,r)}_t-R^{(\infty,r)}_t|^2\right]^{1/2}.
	\end{align}
	By \eqref{est_mart_discont_3}, we obtain that
	\begin{align}\label{con_mart_fluc_3}
		\E\Bigg[\Bigg(\sup_{t\in[0,T]}&\int_0^t\int_{(0,\infty)}\sqrt{z}e^{U_s}\tilde{g}^{(z)}(R^{(z,r)}_{s-},w)\tilde{N}_3(ds,dw)\Bigg)^2\Bigg]\notag\\&+\E\Bigg[\Bigg(\sup_{t\in[0,T]}\int_0^t\int_{(0,\infty)}\sqrt{z}e^{U_s}\tilde{h}^{(z)}(R^{(z,r)}_{s-},w)\tilde{N}_4(ds,dw)\Bigg)^2\Bigg]\notag\\
		&\leq C(T)\left(\int_{(0,\infty)}\frac{w^2z}{(w+z)^2}\nu^{(1)}(dw)+\int_{(0,\infty)}\frac{w^2z}{(w+z)^2}\nu^{(2)}(dw)\right). 
	\end{align}
	Hence, by using \eqref{con_mart_fluc_3} and the fact that $\int_{(0,\infty)}w\nu^{(i)}(dw)<\infty$ for $i=1,2$, we can find a constant $C(T,z)$ such that
	\begin{align}\label{con_mart_fluc_4}
		\E\Bigg[\Bigg(\sup_{t\in[0,T]}\int_0^t&\int_{(0,\infty)}\sqrt{z}e^{U_s}\tilde{g}^{(z)}(R^{(z,r)}_{s-},w)\tilde{N}_3(ds,dw)\Bigg)^2\Bigg]\notag\\&+\E\Bigg[\Bigg(\sup_{t\in[0,T]}\int_0^t\int_{(0,\infty)}\sqrt{z}e^{U_s}\tilde{h}^{(z)}(R^{(z,r)}_{s-},w)\tilde{N}_4(ds,dw)\Bigg)^2\Bigg]\leq C(T,z),
	\end{align}
	and such that $\lim_{z\to\infty}C(z,T)=0$.
	
	Finally, using \eqref{con_mart_fluc_0}, \eqref{con_mart_fluc_1}, \eqref{con_mart_fluc_2}, and \eqref{con_mart_fluc_4} gives
	\begin{align*}
		\E\left[\sup_{t\in[0,T]}|X^{(z)}_t-\sqrt{z}A^{(4,z)}_t|^2\right]\leq C(z,T)+K(T)\E\left[\sup_{t\in[0,T]}|R^{(z,r)}_t-R^{(\infty,r)}_t|^2\right]^{1/2}.
	\end{align*}
	The result now follows from Theorem \ref{large_pop}.
	\section{Proof of Theorem \ref{theo_dual}}\label{App_6}
	We will consider $\N_0\cup \{\Delta\}$ endowed with the discrete topology and $\N_0\cup \{\Delta\}\times [0,1]$ with the product topology. We recall that for every fixed $r\in[0,1]$, $H(n,r)=r^n$ with $H(0,r)=1$  and $H(\Delta,r)=0$, which are bounded and continuous. 
	In addition, for every fixed $k\in \N_0\cup \{\Delta\}$, $H(k,r)=r^k$ is continuous. Therefore, we conclude that $H:\N_0\cup \{\Delta\}\times [0,1]\mapsto [0,1]$ is continuous.
	
	We observe that $H(\cdot,n)$ is a polynomial in $[0,1]$ for fixed $n\in\mathbb{N}_0\cup\{\Delta\}$. This fact clearly implies that $H(\cdot,n)\in\mathcal{C}^2([0,1])$ and hence it lies in the domain of the generator $\mathcal{L}^{(z)}$. Therefore, the process
	\[
	H(n,R^{(z,r)}_t)-\int_0^t\mathcal{L}^{(z)}H(n,R^{(z,r)}_s)ds
	\]
	is a martingale.
	
	Additionally, as in the proof of Lemma 2 in \cite{GPP}, we have that for fixed $r\in[0,1]$ the function $H(\cdot,r)$ lies in the domain of the generator $\mathcal{Q}^{(z)}$, which implies that the process
	\[
	H(N^{(z,n)}_t,r)-\int_0^t\mathcal{Q}^{(z)}H(N^{(z,n)}_s,r)ds
	\]
	is also a martingale.
	In view of Proposition \ref{prop:provingduality}, we will compute $\mathcal{L}^{(z)}H(n,r)$ for $r\in[0,1]$ and $n\in\mathbb{N}$, hence using \eqref{inf_gen_cropped}
	\begin{align}\label{inf_gen_cropped_dual}
		&\mathcal{L}^{(z)}H(n,r)=nr^{n-1}\left[r(1-r)(b^{(2)}-b^{(1)})+\frac{2r(1-r)}{z}\left(c^{(2)}-c^{(1)}\right)\right]\notag\\&+n(n-1)r^{n-1}\frac{(1-r)}{z}(c^{(1)}(1-r)+c^{(2)} r)\notag\\
		&+\frac{\eta^{(1)}}{z}nr^{n-1}(1-r)+\int_{(0,1)}\left[\left(r(1-u)+u\right)^n-r^n\right]\mathbf{T^{(z)}}(\nu^{(1)})(du)\notag\\
		&+zr\int_{(0,1)}\Bigg[\left(r(1-u)+u\right)^n-r^n-\frac{u}{1-u}nr^{n-1}(1-r)1_{(0,1/(1+z))(u)}\Bigg]\mathbf{T^{(z)}}(m^{(1)})(du)\notag\\
		&+z(1-r)\int_{(0,1)}\Bigg[\left(r(1-u)\right)^n-r^n+\frac{u}{1-u}nr^{n}1_{(0,1/(1+z))(u)}\Bigg]\mathbf{T^{(z)}}(m^{(2)})(du)\notag\\
		&-\frac{\eta^{(2)}}{z}nr^{n}+\int_{(0,1)}\left[\left(r(1-u)\right)^n-r^n\right]\mathbf{T^{(z)}}(\nu^{(2)})(du).
	\end{align}
	
	(i) We note that for $r,u\in[0,1]$ and $n\in\N$
	\begin{align*}
		&(r(1-u)+u)^n-r^n-nr^{n-1}u(1-r)=\sum_{k=0}^n\binom{n}{k}r^{n-k}(1-u)^{n-k}u^k-r^n-u(1-r)nr^{n-1}\\
		&=\sum_{k=2}^n\binom{n}{k}(1-u)^{n-k}u^k(r^{n-k}-r^n)-nr^{n-1}u(1-r)(1-(1-u)^{n-1})\\
		&=\sum_{k=2}^n\binom{n}{k}(1-u)^{n-k}u^k(r^{n-k}-r^n)-n\sum_{k=1}^{n-1}\binom{n-1}{k}(1-u)^{n-1-k}u^{k+1}(r^{n-1}-r^n).
	\end{align*}
	Similarly, for $r,u\in[0,1]$ and $n\in\N$
	\begin{align*}
		(r(1-u)+u)^n-r^n&=\sum_{k=1}^n\binom{n}{k}r^{n-k}(1-u)^{n-k}u^k-r^n(1-(1-u)^n)\notag\\&=\sum_{k=1}^n\binom{n}{k}(1-u)^{n-k}u^k(r^{n-k}-r^n).
	\end{align*}
	Therefore, for $r,u\in[0,1]$ and $n\in\N$
	\begin{align}\label{gen_dual_1}
		&zr\int_{(0,1)}\Bigg[\left(r(1-u)+u\right)^n-r^n-\frac{u}{1-u}nr^{n-1}(1-r)1_{(0,1/(1+z))}(u)\Bigg]\mathbf{T^{(z)}}(m^{(1)})(du)\notag\\&=zr\int_{(0,1/(1+z))}\Bigg[\left(r(1-u)+u\right)^n-r^n-unr^{n-1}(1-r)\Bigg]\mathbf{T^{(z)}}(m^{(1)})(du)\notag\\
		&-znr^{n}(1-r)\int_{(0,1/(1+z))}\frac{u^2}{1-u}\mathbf{T^{(z)}}(m^{(1)})(du)\notag\\&+zr\int_{[1/(1+z),1)}\Bigg[\left(r(1-u)+u\right)^n-r^n\Bigg]\mathbf{T^{(z)}}(m^{(1)})(du)\notag\\
		&=\sum_{k=2}^n\binom{n}{k}z\lambda_{n,k}^{(1)}\left(\frac{1}{1+z}\right)(r^{n+1-k}-r^n)\notag\\&+\sum_{k=1}^n\binom{n}{k}z\left[\lambda_{n,k}^{(1)}(1)-\lambda_{n,k}^{(1)}\left(\frac{1}{1+z}\right)\right](r^{n+1-k}-r^n)\notag\\&+(r^{n+1}-r^n)\Bigg[\sum_{k=1}^{n-1}n\binom{n-1}{k}z\lambda_{n,k+1}^{(1)}\left(\frac{1}{1+z}\right)+zn\int_{(0,1/(1+z))}\frac{u^2}{1-u}\mathbf{T^{(z)}}(m^{(1)})(du)\notag\\
		&-\sum_{k=2}^n\binom{n}{k}z\lambda_{n,k}^{(1)}\left(\frac{1}{1+z}\right)-\sum_{k=1}^n\binom{n}{k}z\left[\lambda_{n,k}^{(1)}(1)-\lambda_{n,k}^{(1)}\left(\frac{1}{1+z}\right)\right]\Bigg]\notag\\
		&=\sum_{k=2}^n\binom{n}{k}z\lambda_{n,k}^{(1)}(1)(r^{n+1-k}-r^n)+(r^{n+1}-r^n)\Bigg[\sum_{k=2}^{n}\binom{n}{k}z\left[k\lambda_{n,k}^{(1)}\left(\frac{1}{1+z}\right)-\lambda_{n,k}^{(1)}\left(1\right)\right]\notag\\
		&+zn\int_{(0,1/(1+z))}\frac{u^2}{1-u}\mathbf{T^{(z)}}(m^{(1)})(du)-nz\left[\lambda_{n,1}^{(1)}(1)-\lambda_{n,1}^{(1)}\left(\frac{1}{1+z}\right)\right]\Bigg].
	\end{align}
	(ii)  For $r,u\in[0,1]$ and $n\in\N$, we observe
	\begin{align*}
		(r(1-u))^n-r^n+nur^n&=nur^n(1-(1-u)^{n-1})-r^n(1-(1-u)^n-nu(1-u)^{n-1})\\
		&=r^n\left(n\sum_{k=1}^{n-1}\binom{n-1}{k}(1-u)^{n-1-k}u^{k+1}-\sum_{k=2}^{n}\binom{n}{k}(1-u)^{n-k}u^k\right).
	\end{align*}
	Therefore, for $r,u\in[0,1]$ and $n\in\N$
	\begin{align}\label{gen_dual_2}
		&z(1-r)\int_{(0,1)}\Bigg[\left(r(1-u)^n\right)-r^n+\frac{u}{1-u}nr^n1_{(0,1/(1+z))}(u)\Bigg]\mathbf{T^{(z)}}(m^{(2)})(du)\notag\\
		&=z(1-r)\int_{(0,1/(1+z))}\left[\left(r(1-u)\right)^n-r^n+unr^{n}\right]\mathbf{T^{(z)}}(m^{(2)})(du)\notag\\&+znr^{n}(1-r)\int_{(0,1/(1+z))}\frac{u^2}{1-u}\mathbf{T^{(z)}}(m^{(2)})(du)\notag\\
		&+z(1-r)\int_{[1/(1+z),1)}\Bigg[\left(r(1-u)\right)^n-r^n\Bigg]\mathbf{T^{(z)}}(m^{(2)})(du)\notag\\
		&=(r^{n+1}-r^n)\Bigg[\sum_{k=2}^n\binom{n}{k}z\lambda_{n,k}^{(2)}\left(\frac{1}{1+z}\right)-\sum_{k=1}^{n-1}n\binom{n-1}{k}z\lambda_{n,k+1}^{(2)}\left(\frac{1}{1+z}\right)\notag\\&-zn\int_{(0,1/(1+z))}\frac{u^2}{1-u}\mathbf{T^{(z)}}(m^{(2)})(du)\notag\\
		&+\sum_{k=1}^n\binom{n}{k}z\left[\lambda_{n,k}^{(2)}(1)-\lambda_{n,k}^{(2)}\left(\frac{1}{1+z}\right)\right]\Bigg]\notag\\
		&=(r^{n+1}-r^n)\Bigg[\sum_{k=2}^n\binom{n}{k}z\left[\lambda_{n,k}^{(2)}(1)-k\lambda_{n,k}^{(2)}\left(\frac{1}{1+z}\right)\right]-zn\int_{(0,1/(1+z))}\frac{u^2}{1-u}\mathbf{T^{(z)}}(m^{(2)})(du)\notag\\&+nz\left[\lambda_{n,1}^{(2)}(1)-\lambda_{n,1}^{(2)}\left(\frac{1}{1+z}\right)\right]\Bigg].
	\end{align}
	(iii) For the jump terms due to immigration in the expression for $\mathcal{L}^{(z)}H(n,x)$ given in \eqref{inf_gen_cropped_dual}, we obtain for $r,u\in[0,1]$ and $n\in\N$
	\begin{align*}
		(r(1-u)+u)^n-r^n&=\sum_{k=1}^n\binom{n}{k}r^{n-k}(1-u)^{n-k}u^k-r^n(1-(1-u)^n)\notag\\&=\sum_{k=1}^n\binom{n}{k}(1-u)^{n-k}u^k(r^{n-k}-r^n).
	\end{align*}
	Hence, for $r,u\in[0,1]$ and $n\in\N$
	\begin{align}\label{gen_dual_3}
		\int_{(0,1)}\left[(r(1-u)+u)^n-r^n\right]\mathbf{T^{(z)}}(\nu^{(1)})(du)=\sum_{k=1}^n\binom{n}{k}\mu_{n,k}^1(r^{n-k}-r^n).
	\end{align}
	Similar computations give for $r,u\in[0,1]$ and $n\in\N$
	\begin{align}\label{gen_dual_4}
		\int_{(0,1)}\left[(r(1-u))^n-r^n\right]\mathbf{T^{(z)}}(\nu^{(2)})(du)=-r^n\sum_{k=1}^n\binom{n}{k}\mu_{n,k}^{(2)}.
	\end{align}
	(iv) Finally, for the terms due to the continuous part of the process $R^{(z,r)}$ in the expression for $\mathcal{L}^{(z)}H(n,x)$ given in \eqref{inf_gen_cropped_dual},  we obtain for $r,u\in[0,1]$ and $n\in\N$
	\begin{align}\label{gen_dual_5}
		nr^{n-1}\left[r(1-r)(b^{(2)}-b^{(1)})+\frac{2r(1-r)}{z}\left(c^{(2)}-c^{(1)}\right)\right]&=n(r^{n+1}-r^n)\left[(b^{(1)}-b^{(2)})+\frac{2}{z}(c^{(1)}-c^{(2)})\right],
	\end{align}
	and for $r,u\in[0,1]$ and $n\geq 2$
	\begin{align}\label{gen_dual_6}
		n(n-1)r^{n-1}\frac{(1-r)}{z}(c^{(1)}(1-r)+c^{(2)} r)&=n(n-1)(r^{n+1}-r^n)\frac{(c^{(1)}-c^{(2)})}{z}\notag\\&+n(n-1)\frac{c^{(1)}}{z}(r^{n-1}-r^n).
	\end{align}
	(v) So, putting the pieces together, we obtain using identities \eqref{gen_dual_1}-\eqref{gen_dual_6} in \eqref{inf_gen_cropped_dual}
	\begin{align*}
		&\mathcal{L}^{(z)}H(n,r)=\sum_{k=3}^n\binom{n}{k}z\lambda_{n,k}^{(1)}(1)(r^{n+1-k}-r^n)
		+\sum_{k=1}^n\binom{n}{k}\mu_{n,k}^1(r^{n-k}-r^n)\notag\\&-r^n\left[n\frac{\eta^{(2)}}{z}+\sum_{k=1}^n\binom{n}{k}\mu_{n,k}^{(2)}\right]+(r^{n-1}-r^n)\left[n(n-1)\frac{c^{(1)}}{z}+\binom{n}{2}z\lambda_{n,2}^{(1)}(1)+n\frac{\eta^{(1)}}{z}\right]\notag\\&+(r^{n+1}-r^n)\Bigg[n(n-1)\frac{(c^{(1)}-c^{(2)})}{z}+\sum_{k=2}^{n}\binom{n}{k}z\left[k\lambda_{n,k}^{(1)}\left(\frac{1}{1+z}\right)-\lambda_{n,k}^{(1)}\left(1\right)\right]\notag\\&+zn\int_{(0,1/(1+z))}\frac{u^2}{1-u}\mathbf{T^{(z)}}(m^{(1)})(du)-nz\left[\lambda_{n,1}^{(1)}(1)-\lambda_{n,1}^{(1)}\left(\frac{1}{1+z}\right)\right]\notag\\&+\sum_{k=2}^n\binom{n}{k}z\left[\lambda_{n,k}^{(2)}(1)-k\lambda_{n,k}^{(2)}\left(\frac{1}{1+z}\right)\right]-zn\int_{(0,1/(1+z))}\frac{u^2}{1-u}\mathbf{T^{(z)}}(m^{(2)})(du)\notag\\&+nz\left[\lambda_{n,1}^{(2)}(1)-\lambda_{n,1}^{(2)}\left(\frac{1}{1+z}\right)\right]+n\left((b^{(1)}-b^{(2)})+\frac{2}{z}(c^{(1)}-c^{(2)})\right)\Bigg].
	\end{align*}
	Further computations give for $r\in[0,1]$ and $n\in\mathbb{N}$
	\begin{align*}
		&\mathcal{L}^{(z)}H(n,r)=\sum_{k=3}^n\left[\binom{n}{k}z\lambda_{n,k}^{(1)}(1)+\binom{n}{k-1}\mu_{n,k-1}^{(1)}\right](r^{n+1-k}-r^n)+\mu^{(1)}_{n,n}(1-r^n)\notag\\
		&+n(r^{n+1}-r^n)\left[(b^{(1)}-b^{(2)})+\frac{2}{z}(c^{(1)}-c^{(2)})\right]-r^n\left[n\frac{\eta^{(2)}}{z}+\sum_{k=1}^n\binom{n}{k}\mu_{n,k}^{(2)}\right]\notag
		\end{align*}
		\begin{align*}
		&+(r^{n-1}-r^n)\left[n(n-1)\frac{c^{(1)}}{z}+\binom{n}{2}z\lambda_{n,2}^{(1)}(1)+n\frac{\eta^{(1)}}{z}+n\mu_{n,1}^{(1)}\right]\notag\\&+(r^{n+1}-r^n)\Bigg\{n(n-1)\frac{(c^{(1)}-c^{(2)})}{z}
		+\sum_{k=2}^{n}\binom{n}{k}z\Bigg[k\left(\lambda_{n,k}^{(1)}\left(\frac{1}{1+z}\right)-\lambda_{n,k}^{(2)}\left(\frac{1}{1+z}\right)\right)\notag\\&-\left(\lambda_{n,k}^{(1)}\left(1\right)-\lambda_{n,k}^{(2)}\left(1\right)\right)\Bigg]-nz\left[\left(\lambda_{n,1}^{(1)}\left(1\right)-\lambda_{n,1}^{(2)}\left(1\right)\right)-\left(\lambda_{n,1}^{(1)}\left(\frac{1}{1+z}\right)-\lambda_{n,1}^{(2)}\left(\frac{1}{1+z}\right)\right)\right]\\
		&+zn\int_{(0,1/(1+z))}\frac{u^2}{1-u}\mathbf{T^{(z)}}(m^{(1)})(du)-zn\int_{(0,1/(1+z))}\frac{u^2}{1-u}\mathbf{T^{(z)}}(m^{(2)})(du)\Bigg\}\notag\\
		&=\sum_{k=2}^n\left[\binom{n}{k}\overline{\lambda}_{n,k}^{(1)}+\binom{n}{k-1}\overline{\mu}_{n,k-1}^{(1)}\right](r^{n+1-k}-r^n)+\overline{\mu}^{(1)}_{n,n}(1-r^n)
		-\alpha_nr^n\notag\\
		&
		+(r^{n+1}-r^n)\left[
		ns+\sum_{k=2}^{n}\binom{n}{k}\kappa_k+\beta_n\right]
		=\mathcal{Q}^{(z)}H(n,r).
	\end{align*}
	Meanwhile, for the case $n=0$, we have that $\mathcal{L}^{(z)}H(0,r)=0=\mathcal{Q}^{(z)}H(0,r)$ for $r\in[0,1]$. Similarly, we have that $\mathcal{L}^{(z)}H(\Delta,r)=0=\mathcal{Q}^{(z)}H(\Delta,r)$ for $r\in[0,1]$. Therefore, the result follows from Proposition \ref{prop:provingduality}.
\end{appendix}


\begin{thebibliography}{99}
	\bibitem{A} Applebaum, D.: \textit{L\'evy Processes and Stochastic Calculus.} Cambridge University Press, New York, (2004).
	
	\bibitem{Nat} Berestycki, N.: Recent progress in coalescent theory. Ensaios Matematicos, \textbf{16}(1), 1--193, (2009).
	
	\bibitem{BBL} Berestycki, J., Berestycki, N., Limic, V.: A small-time coupling between $\Lambda$-coalescents and branching processes . \textit{Ann. Appl. Probab.}  \textbf{24} (2),  449--475, (2014).
	
	\bibitem{BLG} Bertoin, J., Le Gall, J.: The Bolthausen-Sznitman coalescent and the genealogy of continuous-state branching processes. \textit{Probab Theory Related Fields.} \textbf{117}, 249--266, (2000).
	
	\bibitem{Bill} Billingsley, P.: \textit{Convergence of Probability Measures.} J. Wiley, New York, (1999).
	
	\bibitem{7authors} Birkner, M., Blath, J., Capaldo, M., Etheridge, E., M\"ohle, M., Schweinsberg, J., Wakolbinger A.: Alpha-stable branching and beta-coalescents. \textit{Electron. J. Probab.}  \textbf{10},  303--325, (2005).
	
	\bibitem{Sch}  B\"ottcher, B., Schilling, R., Wang, J.: L\'evy-Type Processes: Construction, Approximation and Sample Path Properties.  {\it L\'evy Matters III, Springer Lecture Notes in Mathematics,} (2013).
	
	
	\bibitem{CLU} Caballero, M. E., Lambert, A., Uribe, G.: Proof(s) of the Lamperti representaton of continuous-state branching processes. \textit{Probab. Surv.}  \textbf{6},  62--89, (2009).
	
	\bibitem{CPU} Caballero, M. E., P\'erez, J. L., Uribe, G.: A Lamperti-type representation of continuous-state branching processes with immigration. \textit{Ann. Probab.}  \textbf{41}(3A),  1585-1627, (2013).
	
	\bibitem{DawsonWatanabe} Dawson, D. A.: Measure-valued Markov processes. In \'Ecole d'\'Et\'e de Probabilit\'es de Saint-Flour XXI-1991. Lecture Notes in Math. 1541, 1--260. Springer, Berlin, (1993).
	
	\bibitem{DL} Dawson, D. A., Li, Z.: Stochastic equations, flows and measure-valued processes. \textit{Ann. Probab.}  \textbf{40}(2),  813--857, (2012).
	
	
	\bibitem{D-K1} Donnelly, P., Kurtz, T. G.: A countable representation  of the Fleming-Viot measure-valued diffusion. \textit{Ann. Probab.} \textbf{24}, 698--742, (1996).
	
	\bibitem{D-K2} Donnelly, P. , Kurtz, T. G.: Particle  representations for measure-valued population models. \textit{Ann. Probab.} \textbf{27} (1), 166-205 (1999).
	
	\bibitem{EM} Etheridge, A., March, P.: A note on superprocesses. \textit{Probab Theory Related Fields.} \textbf{89}, 141--147, (1991).
	
	\bibitem{EK} Ethier, S.N., and  Kurtz, T.G.: Markov processes. Characterization  and convergence. \rm Wiley Series in Probability and Mathematical Statistics: Probability and Mathematical Statistics. \textit{John Wiley \& Sons, Inc., New York.} (1986).
	
	\bibitem{EW} Eldon, B., Wakeley, J.: Coalescent processes when the distribution of offspring number among individuals is highly skewed. \textit{Genetics.} \textbf{172}(4), 2621--2633, (2006). 
	
	\bibitem{FlemingViot} Fleming, W. H., Viot, M.: Some measure-valued Markov process in population genetics. \textit{Indiana Univ Math J} \textbf{28}, 817--843, (1979).
	
	\bibitem{LF} Fu, Z., Li, Z.: Stochastic equations of non-negative processes with jumps.
	\textit{Stochastic Process. Appl.}  \textbf{120},  306--330, (2010).
	
	\bibitem{Gill73} Gillespie, J., H.: Natural selection for within-generation variance in offspring number. \textit{Genetics}  \textbf{76}(3),  601-606, (1974).
	
	\bibitem{Gill74} Gillespie, J., H.: Natural selection for within-generation variance in offspring number II. Discrete haploid models. \textit{Genetics}  \textbf{81}(2),  403--413, (1975).
	
	\bibitem{GKT} Gonz\'alez Casanova, A., Kurt, N., T\'obi\'as, A.: Particle systems with coordination.   \textit{Preprint arXiv:2001.05802}, (2020).
	
	\bibitem{GKWY} Gonz\'alez Casanova, A., Kurt, N., Wakolbinger, A., Yuan L.: An individual-based model for the Lenski experiment, and the deceleration of the relative fitness. \textit{Stochastic Process. Appl.}  \textbf{126} (8),  2211--2252, (2016).
	
	\bibitem{GMP} Gonz\'alez Casanova, A., Mir\'o-Pina, V., Pardo, J.C.: The Wright-Fisher model with
	efficiency. \textit{Theor Popul Biol.}  \textbf{132},  33--46, (2020).
	
	\bibitem{GPP} Gonz\'alez Casanova, A., Pardo, J.C., P\'erez, J.L.: Branching processes with interactions: the subcritical cooperative regime. \textit{Adv. Appl. Prob.  \textbf{53},  251-278, (2021)}.
	
	\bibitem{Gr} Grey, D. R.: Asymptotic behaviour of continuous time, continuous state-space branching processes. \textit{J. Appl. Prob.}  \textbf{11},  669-677, (1974).
	
	\bibitem{JK} Jansen, S., Kurt, N.: On the notion(s) of duality for Markov processes. \textit{Probab. Surv.}, \textbf{11}, 59--120, (2014).
	
	\bibitem{JL} Johnston, S. G. G., Lambert, A.: The coalescent structure of branching processes: a unifying Poissonization approach. \textit{Preprint
		arXiv:1912.00198}, (2019).
	
	\bibitem{Ka}  Kallenberg, O.: Foundations of Modern Probability.  {\it 1st ed. Springer, New York.} (1997).
	
	
	\bibitem{KW} Kawazu, K., Watanabe, S.: Branching processes with immigration and related limit theorems.
	\textit{Teor. Verojatnost. i Primenem.}  \textbf{16},  34--51, (1971).
	
	\bibitem{Kingman} Kingman, J. F. C.:  The coalescent. \textit{Stochastic Process. Appl.}  \textbf{13},  235--248, (1982).
	
	
	\bibitem{KN} Krone, S. M., Neuhauser, C.: Ancestral Processes with Selection. \textit{Theor Popul Biol.}  \textbf{51} (3),  210--237, (1997).
	
	\bibitem{Lambert} Lambert, A.: Probability of fixation under weak selection: A branching process
	unifying approach. \textit{Theor. Popul. Biol.} \textbf{69}, 419--441, (2006).
	
	
	
	\bibitem{ZL} Li, Z.: \textit{Measure-Valued Branching Markov Processes.} Springer, Berlin, (2011).
	
	\bibitem{LP} Li, Z., Pu, F.: Strong solutions of jump-type stochastic equations.
	\textit{Electron. Commun. Probab.}  \textbf{17} (33),  1--13, (2012).
	
	\bibitem{Per}Perkins, E. A.: Conditional Dawson-Watanabe superprocess and Fleming-Viot processes. Seminar on stochastic processes. Birkh\"auser, (1991).
	
	\bibitem{Pitman} Pitman, J.: Coalescents with multiple collisions. \textit{Ann. Probab.}, \textbf{27}, 1870--1902, (1999).
	
	\bibitem{protter}  Protter, P.:  {\it Stochastic integration and
		differential equations. 2nd Edition}, Springer, Berlin, (2005).
	
	
	\bibitem{RE} Rebolledo, R.:  Sur l'existence de solutions \'a certains probl\`ems de semimartingales.
	\textit{C. R. Acad. Sci. paris} \textbf{290}, 43--65, (1980).
	
	\bibitem{Sagitov} Sagitov, S.: The general coalescent with asynchronous mergers of ancestral lines. \textit{J. Appl. Probab.}, \textbf{36}, 1116--1125, (1999).
	
	
	
	\bibitem{Schweinsberg} Schweinsberg, J.: Coalescent processes obtained from supercritical Galton?Watson processes. \textit{Stochastic Process. Appl.}  \textbf{106}, 1  107--139, (2003).
	
	
	\bibitem{Taylor} J.E. Taylor, The genealogical consequences of fecundity variance polymorphism, \textit{Genetics} 182:813?837, (2009)
	
	
	
	
	
	
	
	
\end{thebibliography}
\end{document}